\newif\ifpersonal 
\newcommand{\mk}{k}
\theoremstyle{plain}
\newtheorem{thm-intro}{Theorem}
\newtheorem{thm}{Theorem}[section]
\newtheorem*{thm*}{Theorem}
\newtheorem{lem}[thm]{Lemma}
\newtheorem{lemma}[thm]{Lemma}
\newtheorem{prop}[thm]{Proposition}
\newtheorem{cor}[thm]{Corollary}
\theoremstyle{definition}
\newtheorem{defin}[thm]{Definition}
\newtheorem{definition}[thm]{Definition}
\newtheorem{eg}[thm]{Example}
\theoremstyle{remark}
\newtheorem{rem}[thm]{Remark}
\numberwithin{equation}{section}
\newcommand*{\personal}[1]{\textcolor[rgb]{0.6,0.6,1}{(Personal: #1)}}
\newenvironment{newversion}{\color{purple}}{}
\newcommand*{\personal}[1]{\ignorespaces}
\newcommand{\cE}{\mathcal E}
\newcommand{\cF}{\mathcal F}
\newcommand{\cI}{\mathcal I}
\newcommand{\cO}{\mathcal O}
\DeclareFontFamily{U}{BOONDOX-calo}{\skewchar\font=45 }
\DeclareFontShape{U}{BOONDOX-calo}{m}{n}{<-> s*[1.05] BOONDOX-r-calo}{}
\DeclareFontShape{U}{BOONDOX-calo}{b}{n}{<-> s*[1.05] BOONDOX-b-calo}{}
\DeclareMathAlphabet{\mathcalboondox}{U}{BOONDOX-calo}{m}{n}
\let\save@mathaccent\mathaccent
\newcommand*\if@single[3]{%
	\setbox0\hbox{${\mathaccent"0362{#1}}^H$}%
	\setbox2\hbox{${\mathaccent"0362{\kern0pt#1}}^H$}%
	\ifdim\ht0=\ht2 #3\else #2\fi
}
\newcommand*\rel@kern[1]{\kern#1\dimexpr\macc@kerna}
\newcommand*\widebar[1]{\@ifnextchar^{{\wide@bar{#1}{0}}}{\wide@bar{#1}{1}}}
\newcommand*\wide@bar[2]{\if@single{#1}{\wide@bar@{#1}{#2}{1}}{\wide@bar@{#1}{#2}{2}}}
\newcommand*\wide@bar@[3]{%
	\begingroup
	\def\mathaccent##1##2{%
		\let\mathaccent\save@mathaccent
		\if#32 \let\macc@nucleus\first@char \fi
		\setbox\z@\hbox{$\macc@style{\macc@nucleus}_{}$}%
		\setbox\tw@\hbox{$\macc@style{\macc@nucleus}{}_{}$}%
		\dimen@\wd\tw@
		\advance\dimen@-\wd\z@
		\divide\dimen@ 3
		\@tempdima\wd\tw@
		\advance\@tempdima-\scriptspace
		\divide\@tempdima 10
		\advance\dimen@-\@tempdima
		\ifdim\dimen@>\z@ \dimen@0pt\fi
		\rel@kern{0.6}\kern-\dimen@
		\if#31
		\overline{\rel@kern{-0.6}\kern\dimen@\macc@nucleus\rel@kern{0.4}\kern\dimen@}%
		\advance\dimen@0.4\dimexpr\macc@kerna
		\let\final@kern#2%
		\ifdim\dimen@<\z@ \let\final@kern1\fi
		\if\final@kern1 \kern-\dimen@\fi
		\else
		\overline{\rel@kern{-0.6}\kern\dimen@#1}%
		\fi
	}%
	\macc@depth\@ne
	\let\math@bgroup\@empty \let\math@egroup\macc@set@skewchar
	\mathsurround\z@ \frozen@everymath{\mathgroup\macc@group\relax}%
	\macc@set@skewchar\relax
	\let\mathaccentV\macc@nested@a
	\if#31
	\macc@nested@a\relax111{#1}%
	\else
	\def\gobble@till@marker##1\endmarker{}%
	\futurelet\first@char\gobble@till@marker#1\endmarker
	\ifcat\noexpand\first@char A\else
	\def\first@char{}%
	\fi
	\macc@nested@a\relax111{\first@char}%
	\fi
	\endgroup
}
\newcommand{\hD}{\widehat D}
\newcommand{\hZ}{{\widehat Z}}
\newcommand{\hT}{{\widehat T}}
\newcommand{\rSet}{\mathrm{Set}}
\newcommand{\Aff}{\mathsf{Aff}}
\newcommand{\St}{\mathsf{St}}
\newcommand{\bfBun}{\mathbf{Bun}}
\newcommand{\Bun}{\mathsf{Bun}}
\newcommand{\op}{^\mathrm{op}}
\tikzset{
  closed/.style = {decoration = {markings, mark = at position 0.5 with { \node[transform shape, xscale = .8, yscale=.4] {/}; } }, postaction = {decorate} },
  open/.style = {decoration = {markings, mark = at position 0.5 with { \node[transform shape, scale = .7] {$\circ$}; } }, postaction = {decorate} }
}
\DeclareMathOperator{\Spec}{Spec}
\DeclareMathOperator*{\colim}{colim}
\let\originalleft\left
\let\originalright\right
\renewcommand{\left}{\mathopen{}\mathclose\bgroup\originalleft}
\renewcommand{\right}{\aftergroup\egroup\originalright}
\def\DeclareMathBinOp{\@ifstar{\declaremathbinop@star}{\declaremathbinop@nostar}}
\def\declaremathbinop@star#1#2{\def#1{\test@subnexp@star#2}}
\def\test@subnexp@star#1{\@ifnextchar_{\isol@subnexp@star#1}{\test@exp@star#1}}
\def\test@exp@star#1{\@ifnextchar^{\isol@expnsub@star#1}{\mathbin{#1}}}
\def\isol@subnexp@star#1_#2{\@ifnextchar^{\eval@subnexp@star#1_#2}{\mathbin{\operatorname*{#1}_{#2}}}}
\def\eval@subnexp@star#1_#2^#3{\mathbin{\operatorname*{#1}_{#2}^{#3}}}
\def\isol@expnsub@star#1^#2{\@ifnextchar_{\eval@expnsub@star#1^#2}{\mathbin{\operatorname*{#1}^{#2}}}}
\def\eval@expnsub@star#1^#2_#3{\mathbin{\operatorname*{#1}_{#3}^{#2}}}
\def\declaremathbinop@nostar#1#2{\def#1{\test@subnexp@nostar{#2}}}
\def\test@subnexp@nostar#1{\@ifnextchar_{\isol@subnexp@nostar#1}{\test@exp@nostar#1}}
\def\test@exp@nostar#1{\@ifnextchar^{\isol@expnsub@nostar#1}{\mathbin{#1}}}
\def\isol@subnexp@nostar#1_#2{\@ifnextchar^{\eval@subnexp@nostar#1_{{#2}}}{\mathbin{\underset{#2}{#1}}}}
\def\eval@subnexp@nostar#1_#2^#3{\mathbin{\overset{#3}{\underset{#2}{#1}}}}
\def\isol@expnsub@nostar#1^#2{\@ifnextchar_{\eval@expnsub@nostar#1^{#2}}{\mathbin{\overset{#2}{#1}}}}
\def\eval@expnsub@nostar#1^#2_#3{\mathbin{\overset{#2}{\underset{#3}{#1}}}}
\def\declaremathbinop@toto#1#2{\def#1{\testoo@subnexp@toto{#2}}}
\def\testoo@subnexp@toto#1{\@ifnextchar_{\isoloo@subnexp@toto#1}{\testoo@exp@toto#1}}
\def\testoo@exp@toto#1{\@ifnextchar^{\isoloo@expnsub@toto#1}{\mathbin{#1}}}
\def\isoloo@subnexp@toto#1_#2{\@ifnextchar^{\eval@subnexp@toto#1_{{#2}}}{\mathbin{\underset{#2}{#1}}}}
\def\eval@subnexp@toto#1_#2^#3{\mathbin{\overset{#3}{\underset{#2}{#1}}}}
\def\isoloo@expnsub@toto#1^#2{\@ifnextchar_{\eval@expnsub@toto#1^{#2}}{\mathbin{\overset{#2}{#1}}}}
\def\eval@expnsub@toto#1^#2_#3{\mathbin{\overset{#2}{\underset{#3}{#1}}}}
\DeclareMathBinOp*{\newtimes}{\times}
\DeclareMathBinOp*{\newamalg}{\amalg}
\DeclareMathBinOp*{\newotimes}{\otimes}
\newlist{assumptions}{enumerate}{10}
\setlist[assumptions]{label*={\upshape{(\alph*)}}}
\crefname{assumptionsi}{assumption}{assumptions}
\Crefname{assumptionsi}{Assumption}{Assumptions}
\newcommand{\mA}  {\mathbb A}
 \newcommand{\mC}{\mathbb C} 
  \newcommand{\mG}{\mathbb G}
\newcommand{\mN}{\mathbb N}
\newcommand{\mZ}{\mathbb Z}
\newcommand{\calE}{\mathcal E} \newcommand{\calF}{\mathcal F} \newcommand{\calG}{\mathcal G}
 \newcommand{\calO}{\mathcal O}
  \newcommand{\gog}{\mathfrak g}
\newcommand{\gok}{\mathfrak k}  
  \newcommand{\gop}{\mathfrak p}
\newcommand{\lra}      {\longrightarrow}
\newcommand{\gra}{\alpha}  \newcommand{\grb}{\beta}       \newcommand{\grg}{\gamma}
  \newcommand{\gre}{\varepsilon} 
\newcommand{\grf}{\varphi}       
\newcommand{\am}[1]{{\color{red}\textsf{[[AM: #1]]}}}
\newcommand{\vm}[1]{{\color{blue}\textsf{[[VM: #1]]}}}
\newcommand{\catset}{\mathsf{Set}}
\newcommand{\gruppi}{\mathsf{Groups}}
\newcommand{\kalg}{{\mathsf{CAlg}}_{\mk}}
\newcommand{\kalgnoeth}{{\mathsf{CAlg}}^{noe}_{\mk}}
\newcommand{\kalgar}{{\mathsf{CAlg}}^{ar}_{\mk}}
\newcommand{\graff}{\calG r}
\newcommand{\graffbig}{\calG r^{big}_{G}}
\newcommand{\graffraskin}{\calG r^{(2)}_{G}}
\newcommand{\SigmaGraffine}[1]{\Sigma_{#1}}
\newcommand{\rad}[1]{{\operatorname{rad} #1}}
\newcommand{\affinize}{\mathrm{aff}}
\newcommand{\graffine}{\mathcal G r}
\newcommand{\GrLG}{\mathbf{Gr}_{LG}}
\newcommand{\mand}{\text{ and }}
\begin{document}

\title[Two dimensional versions of the affine Grassmannian]{Two dimensional versions of the affine Grassmannian and their geometric description}

\author{Andrea MAFFEI}
\address{Andrea MAFFEI, Universit\`a di Pisa, Italy}
\email{andrea.maffei@unipi.it }

\author{Valerio MELANI}
\address{Valerio MELANI, DIMAI, Universit\`a di Firenze, Italy}
\email{valerio.melani@unifi.it}

\author{Gabriele VEZZOSI}
\address{Gabriele VEZZOSI, DIMAI, Universit\`a di Firenze, Italy}
\email{gabriele.vezzosi@unifi.it}
\date{\today}

\begin{abstract}
For a smooth affine algebraic group $G$ over an algebraically closed field, we consider several two-variables generalizations of the affine Grassmannian $G(\!(t)\!)/G[\![t]\!]$, given by quotients of the \emph{double loop} group $G(\!(x)\!)(\!(y)\!)$. We prove that they are representable by \emph{ind-schemes} if $G$ is solvable. Given a smooth surface $X$ and a flag of subschemes of $X$, we provide a \emph{geometric} interpretation of the two-variables Grassmannians, in terms of bundles and trivialisation data defined on appropriate loci in $X$, which depend on the flag.
\end{abstract}

\maketitle

\tableofcontents

\section*{Introduction}
This paper could be considered as a continuation, although in a slightly different and more concrete direction, of the investigation started in \cite{Hennion_Melani_Vezzosi_FlagGrass} about possible generalizations of the affine Grassmannian from curves to surfaces.\\

For $G$ a smooth affine algebraic group over a field $k$ (that we will assume to be algebraically closed), the corresponding \emph{affine Grassmannian} $\mathcal{G}r_G$ is the fppf sheaf 
\begin{equation}\label{classicgrass}
\mathsf{CAlg}_k \ni R \longmapsto \frac{LG(R)}{JG(R)}:= \frac{G(R(\!(t)\!))}{G(R[\![ t ]\!]).}
\end{equation}
Here $\mathsf{CAlg}_k$ denotes the category of commutative $k$-algebras,  $JG: R \mapsto G(R[\![ t ]\!])$ is the \emph{jet} functor of $G$ (often written as $L^{+}G$, and called the positive loop functor of $G$), and $LG: R \mapsto G(R(\!( t )\!))$ is the \emph{loop} functor of $G$. The fact that this is a fppf \emph{sheaf} (and not only a presheaf) has been proved by K.  {\v C}esnavi{\v c}ius (see \cite[Proposition 1.6]{CesnaGrassmanniana})\footnote{It turns out to be also a fpqc sheaf (\cite[Proposition 1.6]{CesnaGrassmanniana}).} in the case of reductive $G$, and from this results the general case of an arbitrary smooth affine algebraic group $G$ follows easily (see Corollary \ref{cor:quotientSheaf}). \\ The affine Grassmannian $\mathcal{G}r_G$ also has the following more geometrical interpretation.  For any pair $(C,x)$ consisting of a reduced connected curve $C$ over $k$, and a smooth $k$-point $x \in C$, we can consider the \'etale stack in groupoids over $\mathsf{Aff}_k := (\mathsf{CAlg}_k)^{\mathrm{op}}$
\begin{equation}\label{globalGr} \underline{\mathsf{Gr}}_{G; (C,x)} := \mathsf{Bun}_{C} \times_{\mathsf{Bun}_{C\setminus x}} \mathrm{Spec}\, k \end{equation} where $\mathsf{Bun}_Y$ denotes the stack (over $\mathsf{Aff}_k$ or $\mathsf{Sch}_k$) of $G$-bundles on $Y$
$$\mathsf{Bun}_Y(\mathrm{Spec}\, R) :=\textrm{groupoid of $G$-bundles on $Y\times_{k} \mathrm{Spec}\, R$},$$
the map $\mathsf{Bun}_{C} \to \mathsf{Bun}_{C\setminus x}$ is given by restricting $G$-bundles, and the map $\mathrm{Spec}\, k \to \mathsf{Bun}_{C\setminus x}$ selects the trivial $G$-bundle on $C\setminus x$. In other words, $\underline{\mathsf{Gr}}_{G; (C,x)}(R)$ is the groupoid of pairs $(\mathcal{E}, \varphi)$ where $\mathcal{E}$ is a $G$-bundle on $C\times \mathrm{Spec}\, R$, and $\varphi$ is a trivialization of $\mathcal{E}|(C\setminus x)$ (with obvious isomorphisms between such pairs). \\A first observation is that $\underline{\mathsf{Gr}}_{G; (C,x)}$ is actually set valued\footnote{More precisely,  it has values in groupoids whose automorphism groups are all trivial, so that each of these groupoids is equivalent, as a category, to a set (the set of connected components of the groupoid).}, this essentially follows from the fact that $x$ is an effective Cartier divisor inside $C$.  Moreover, as a consequence of Beauville-Laszlo Theorem (see, e.g. \cite[\S 1.4]{Zhu2017}), there is a (non-canonical) isomorphism of sheaves $$\underline{\mathsf{Gr}}_{G; (C,x)} \simeq \mathcal{G}r_G$$ that can indeed be seen as a geometrical description of the affine Grassmannian of $G$. For reasons that will be clear later on in this Introduction, we will call $\underline{\mathsf{Gr}}_{G; (C,x)}$ the \emph{geometric affine Grassmannian} of $G$ relative to the pair $(C,x)$.\\
A property of $\mathcal{G}r_G$ (hence of any $\underline{\mathsf{Gr}}_{G; (C,x)}$) that is particularly important for applications, namely to geometric representation theory and the geometric Langlands program, is its ind-representability: for $G$ smooth, the sheaf $\mathcal{G}r_G$ is represented by an ind-scheme which is of ind-finite type; if moreover $G$ is reductive, then $\mathcal{G}r_G$ is represented by an ind-projective ind-scheme (see e.g \cite[Theorem 1.2.2]{Zhu2017}).\\ For our purposes, we may summarize the situation as follows\\

\begin{enumerate}
\item The \'etale or fppf sheafification $\mathcal{G}r_{G}$ of the quotient presheaf\,\,\, $R\mapsto \frac{G((\!(t)\!))}{G([\![t]\!])}$ \,\, is ind-representable.
\item$\mathcal{G}r_{G}$ has a (non-canonical) geometric description $\underline{\mathsf{Gr}}_{G; (C,x)} \simeq \mathcal{G}r_G$ for any pair $(C,x)$ as above.
\end{enumerate}
\medskip
In this paper, we try to generalize items (1) and (2) to the two-dimensional (i.e. two variables) case. \\

We start by describing the extensions of (1) we consider in this paper.\\ In the two variables case, various combinations of the jet and loop functors may be considered, both in the numerator and in the denominator of the presheaf quotient in (1).
A natural and common choice is to consider the presheaf quotient (on $\mathsf{Aff}_k$)
\begin{equation}\label{uno}\mathcal{G}r^{L, \mathrm{pre}}_{G} : \mathsf{CAlg}_k \ni R \longmapsto \frac{LLG(R)}{JLG(R)}= \frac{G(R(\!(x)\!)(\!(y)\!))}{G(R[\![x]\!](\!(y)\!))} \,\, \in \mathsf{Sets}\end{equation}
that can be formally interpreted as the (presheaf) \emph{affine Grassmannian} $\mathcal{G}r_{LG}$ \emph{of the loop group} functor $LG$.\\ We may also consider \emph{looping the usual affine Grassmannian} of $G$, i.e. study the functor 
\begin{equation}\label{due} {}^L \mathcal{G}r^{\mathrm{pre}}_{G} : \mathsf{CAlg}_k \ni R \longmapsto \frac{LLG(R)}{LJG(R)}= \frac{G(R(\!(x)\!)(\!(y)\!))}{G(R(\!(x)\!)[\![y]\!])} \,\, \in \mathsf{Sets}.\end{equation} This presheaf quotient was studied before by Frenkel-Zhu in \cite{FZ}. \\We also consider the following variations
\begin{equation}\label{tre} \mathcal{G}r^{\mathrm{big, pre}}_{G} : \mathsf{CAlg}_k \ni R \longmapsto \frac{LLG(R)}{JJG(R)}= \frac{G\big( R(\!(x)\!)(\!(y)\!) \big)}{G\big( R[\![x]\!][\![y]\!] \big)}  \,\, \in \mathsf{Sets}\end{equation}
\begin{equation}\label{quattro} \graff^{J, \mathrm{pre}}_G : \mathsf{CAlg}_k \ni R \longmapsto \frac{LJG(R)}{JJG(R)}=\frac{G\big( R(\!(x)\!)[\![y]\!] \big)}{G\big( R[\![x]\!][\![y]\!] \big)} \,\, \in \mathsf{Sets}\end{equation} the latter being  formally interpreted as the (presheaf) \emph{affine Grassmannian} $\mathcal{G}r_{JG}$ \emph{of the jet group} functor $JG$.\\ Finally, inspired by 2-dimensional local fields theory, we study the following functor (kindly suggested to us by Sam Raskin)
\begin{equation}\label{cinque} \mathcal{G}r^{\mathrm{(2)}, pre}_G : \mathsf{CAlg}_k \ni R \longmapsto \frac{LLG(R)}{G^{(2)}G(R)} := \frac{G\big( R(\!(x)\!)(\!(y)\!) \big)}{G\big(R[\![x]\!] + \sum_{i>0}R(\!(x)\!)y^i\subset R(\!(x)\!)[\![y]\!]\big)}\end{equation} that we call the \emph{2-dimensional local field Grassmannian} of $G$. Note that, for $R=k$, the ring $k[\![x]\!] + \sum_{i>0}k(\!(x)\!)y^i\subset k(\!(x)\!)[\![y]\!]$ is the valuation ring of $k(\!(x)\!)(\!(y)\!)$ attached to its natural rank 2 valuation.  \\ 
We may then summarize our main result as a partial generalization of (1) above as follows\\

\noindent{\textbf{Theorem A.}} (see Theorem \ref{teo:rappresentabilita}) Suppose $G$ is solvable. Then all the above two dimensional affine Grassmannians $\mathcal{G}r^{L, \mathrm{pre}}_{G}$, ${}^L \mathcal{G}r^{\mathrm{pre}}_{G}$, $\mathcal{G}r^{\mathrm{big, pre}}_{G}$, $\graff^{J, \mathrm{pre}}_G$ and $\mathcal{G}r^{\mathrm{(2)}, pre}_G$ are represented by ind-schemes (hence they are already fppf sheaves). \\

\medskip
It is important to remark that, in contrast with the usual affine Grassmannian (\ref{classicgrass}), the representing ind-schemes are \emph{not} indexed by a countable posets, and are \emph{not} ind-finite type.\\ The proof of Theorem A relies on finding normal forms of elements in the appropriate quotients. In view of Theorem A, we will remove the suffix ``pre'' (for presheaf) in the notation of our the two-variables presheaves quotients affine Grassmannians: we will simply write ${}^L \mathcal{G}r_{G}$ for ${}^L \mathcal{G}r^{\mathrm{pre}}_{G}$, $\mathcal{G}r^{\mathrm{big}}_{G}$ for $\mathcal{G}r^{\mathrm{big, pre}}_{G}$, etc. \\

\bigskip

We then tackle the extension of item (2) above to the the two-dimensional case.\\ We associate to any quasi projective surface $X$ over $k$ endowed with a \emph{flag} of closed subschemes $(D,Z)$, $Z\subsetneq D\subsetneq X$ (where $D$ is an effective Cartier divisor in $X$, and $Z$ has codimension $1$ in $D$), a list of geometrical analogs (see Definition \ref{defin:geometricGrassmannians}) of the presheaf quotients (\ref{uno}), (\ref{due}), (\ref{tre}), (\ref{quattro}) and (\ref{cinque}) defined earlier in this Introduction \footnote{We use slightly different notations with respect to the main text: we believe these are clearer in an Introduction. For comparison, $\underline{\mathbf{Gr}}_{G;X,D,Z}^{L}$ here corresponds to $\underline{\mathbf{Gr}}_{D,Z}^{L, \sharp}$ in the main text. Similarly for the others. }
\begin{equation}\label{listgeom}\underline{\mathbf{Gr}}_{G; (X,D,Z)}^{L}\,\, ,\,\,  {}^L\underline{\mathbf{Gr}}_{G; (X,D,Z)}\,\, ,\,\, \underline{\mathbf{Gr}}_{G;(X, D,Z)}^{\mathrm{big}}\,\, ,\,\, \underline{\mathbf{Gr}}_{G; (X, D,Z)}^{J} \,\, ,\,\, \underline{\mathbf{Gr}}^{(2)}_{G; (X,D,Z)}.\end{equation}

We call them \emph{geometric Grassmannians for} $G$ \emph{relative to the flag triple} $(X,D,Z)$.  These objects are, a priori, fppf stacks over $k$, and, roughly speaking, classifies pairs $(\mathcal{E}, \varphi)$ where $\mathcal{E}$ is a $G$-bundle on a locus defined by the flag $(D,Z)$ (a different locus one for each $\underline{\mathbf{Gr}}$ in the list), and $\varphi$ is a trivialization of $\mathcal{E}$ on an appropriate sub-locus\footnote{This is just to convey the intuition: it is not really a sub-locus, rather a scheme mapping to the given locus.} (again a different sub-locus one for each $\underline{\mathbf{Gr}}$ in the list). In this sense, these objects are of the same type as $\underline{\mathsf{Gr}}_{G; (C,x)}$ in (\ref{globalGr}), but they are related to the surface $X$ rather than to the curve $C$. The rigorous definition of the stacks in (\ref{listgeom}) require the formalism of \emph{geometrical fiber functors} (introduced in \cite{Hennion_Melani_Vezzosi_FlagGrass}): the idea is to first define these as prestacks on the fppf site $\mathsf{Aff}/X$ of affine schemes mapping to $X$ (this allow us to consider affine formal completion in a systematic way), then pushing this to prestacks over $k$ and finally consider the associated fppf stacks over $k$ (see Section \ref{sec:fiberfunctors}, \ref{sites} and \ref{subsecdefgoemgrass}). \\  
Our second main result is the following comparison\\

\noindent{\textbf{Theorem B.}} (see Theorem \ref{thm:comparazioneGrass})
Suppose $G$ is an \emph{arbitrary} smooth affine $k$-group scheme. If we consider 
$X=\mathbb{A}_k^2$ equipped with the flag $D=\mathbb{A}^1=\{y=0\}$, $Z=\{x=y=0\}:=\underline{0}$, then there are canonical isomorphisms of fppf sheaves of sets
\begin{equation}\label{secondmain}
{}^L \mathcal{G}r_{G} \simeq {}^L\underline{\mathbf{Gr}}_{G; (\mathbb{A}_k^2,\mathbb{A}^1,\underline{0})} \qquad  \mathcal{G}r^{\mathrm{big}}_{G} \simeq \underline{\mathbf{Gr}}_{G;(\mathbb{A}_k^2,\mathbb{A}^1,\underline{0})}^{\mathrm{big}}\qquad 
\graff^{J}_G\simeq  \underline{\mathbf{Gr}}_{G; (\mathbb{A}_k^2,\mathbb{A}^1,\underline{0})}^{J} \qquad \mathcal{G}r^{\mathrm{(2)}}_G \simeq \underline{\mathbf{Gr}}^{(2)}_{G; (\mathbb{A}^2_k,\mathbb{A}^1,\underline{0})}.
\end{equation}
\medskip

Note that Theorem B does not mention the geometric Grassmannian for the loop group $\underline{\mathbf{Gr}}_{G; (\mathbb{A}_k^2,\mathbb{A}^1,\underline{0})}^{L}$. Indeed, for this, we are only able to prove that it agrees with its two-variables quotient cousin $\mathcal{G}r^{L}_{G}$ on $k$-points for $G$ special or solvable (Proposition \ref{loopgrassonkpoints}). \\ Note that, in particular, Theorem B says that for $(X,D,Z)= (\mathbb{A}_k^2,\mathbb{A}^1,\underline{0})$ all the fppf stacks in (\ref{listgeom}), except possibly $\underline{\mathbf{Gr}}_{G; (X,D,Z)}^{L}$, are  actually ind-representable fppf sheaves of sets. The proof of Theorem B proceeds by analyzing each case separately.\\Theorem B can be seen as a first instance of a generalization of the classical item (2) above, in that it gives a geometric description of the two-variables (pre)sheaves quotients affine Grassmannians considered earlier. However, this description only involves the ``obvious candidate''  $(X,D,Z)= (\mathbb{A}_k^2,\mathbb{A}^1,\underline{0})$. In Section \ref{sec:generalization} we show the following more general result \\

\noindent{\textbf{Theorem C.}} (see Theorem \ref{comparisongeometricvsquotient}) Let $X$ be a smooth quasi-projective surface, $D$ a smooth effective Cartier divisor in $X$, and $Z$ consist of a \emph{single} point in $D$. Then there are (non-canonical) isomorphisms

\begin{equation}\label{thirdmain}
{}^L \mathcal{G}r_{G} \simeq {}^L\underline{\mathbf{Gr}}_{G; (X,D,Z)} \qquad  \mathcal{G}r^{\mathrm{big}}_{G} \simeq \underline{\mathbf{Gr}}_{G;(X,D,Z)}^{\mathrm{big}}\qquad
\graff^{J}_G\simeq  \underline{\mathbf{Gr}}_{G; (X,D,Z)}^{J} \qquad \mathcal{G}r^{\mathrm{(2)}}_G \simeq \underline{\mathbf{Gr}}^{(2)}_{G; (X,D,Z)}.
\end{equation}

\medskip
The proof of Theorem C basically relies on the invariance of all fiber functors involved in defining (\ref{listgeom}) under passing to \'etale neighborhoods: we first reduce to $X$ affine, and then to $X=\mathbb{A}_k^2$.\\ By observing a factorization property of any of the geometric Grassmannians appearing (\ref{listgeom}), that holds when passing from a single point $Z$ in $D$ to a finite number of points $Z$ in $D$, we finally deduce from Theorem C the following more general ind-representability result\\

\noindent{\textbf{Corollary D.}} (see Theorem \ref{indreprofgeometricmorepoints}) Let $G$ be solvable, $X$ a smooth quasi-projective surface, and $D$ a smooth effective Cartier divisor in $X$, and $Z$ consist of a \emph{finite} number of points in $D$. Then, the fppf stacks
$${}^L\underline{\mathbf{Gr}}_{G; (X,D,Z)} \qquad \underline{\mathbf{Gr}}_{G;(X, D,Z)}^{\mathrm{big}}\qquad \underline{\mathbf{Gr}}_{G; (X, D,Z)}^{J} \qquad \underline{\mathbf{Gr}}^{(2)}_{G; (X,D,Z)}$$ are fppf sheaves of sets, and they are represented by ind-schemes.\\

\noindent \textbf{Future directions.} This paper leaves some natural open questions, some of which we plan to investigate in the near future. First of all, Proposition  \ref{loopgrassonkpoints} provides a geometric interpretation of $\mathcal{G}r^{L}_{G}$ only on $k$-points (and for some classes of groups $G$). It would be interesting to extend this interpretation to the full sheaf $\mathcal{G}r^{L}_{G}$, also considering more general groups $G$. Secondly, our techniques of proof of  ind-representability for the two dimensional Grassmannians (Theorem A) are very concrete and computational, and to our knowledge they cannot be directly extended to cover the case of a general reductive $G$. Some new ideas and techniques seem to be needed here.\\ As suggested by S. Raskin, the $2$-dimensional local fields Grassmannian might be a good candidate for hosting some $2$-dimensional version of Geometric Satake. We think this is a very interesting research direction. Finally, the case of the geometric jet Grassmannian will be more carefully studied in a forthcoming paper with G. Nocera. In fact, while our geometric Grassmannians are defined with respect to a \emph{fixed} flag of subschemes of $X$, it is possible to let the flag vary, much in the spirit of the approach taken in \cite{Hennion_Melani_Vezzosi_FlagGrass}. It is possible to show that many of the properties of the Beilinson-Drinfeld Grassmannian for curves can be extended to (a flag version of) the geometric jet Grassmannian: in particular, we will prove that it enjoys a \emph{flag factorization property} (we refer to the Introduction of \cite{Hennion_Melani_Vezzosi_FlagGrass} for more details).\\
Another interesting question is to relate Parshin's higher reciprocity laws on surfaces (see e.g. \cite{Osip, Os-Zhu-cat, Os-Zhu}) to our geometric Grassmannians (by allowing the flag $(D,Z)$ to vary in appropriate families).\\


\noindent \textbf{Related works.} As mentioned at the beginning of this Introduction, this paper is, in some sense, a continuation of \cite{Hennion_Melani_Vezzosi_FlagGrass}. The point of view here is more concrete: we do not consider derived moduli stacks nor do we allow our flags of subschemes to vary in families. On the other hand, we study here different Grassmannians with respect to the single one considered in \cite{Hennion_Melani_Vezzosi_FlagGrass}. The paper \cite{FZ} already studied what we call here ${}^L\mathcal{G}r_G$, though in a different direction, while \cite{BraKaz} focused on a version of our $\mathcal{G}r^{L}_G$. Our paper might be seen as giving (partial) ind-representability results, and a geometric interpretation of the objects considered in these papers.\\

\noindent \textbf{Acknowledgments.} We wish to thank B. Hennion, K. \v{C}esnavi\v{c}ius, A. Mathew, and  S. Raskin for very useful comments and suggestions on the topics of this paper.\\

\section{Two dimensional quotient Grassmannians and their ind-representability}


\subsection{Preliminaries}

Let $k$ an algebraically closed field. We denote by $\kalg$ the category of $k$-algebras. A functor from the category of $k$-algebras to the category of sets (or groups) will be called a presheaf. 

\subsubsection{Ind-schemes: definitions}
In this paper by \emph{ind-scheme} we mean a functor $\calF:\kalg\lra \catset$ that can be realized as a colimit of schemes $X_i$ indexed by a partially ordered filtered set $(I,<)$ and such that for  $i<j$ the map $X_i\lra X_j$ is a closed immersion. 

It is important to notice, however, that we do \emph{not} assume that the colimit is over a \emph{countable} poset: unfortunately none of our ind-schemes will have this property. 

We say that that an ind-scheme $\calF$ is an \emph{ind-affine} ind-scheme if, furthermore, the $X_i$'s can be chosen to be affine schemes.

\subsubsection{Jets and loops}\label{ssez:JandL}
Recall that given a functor $X:\kalg\lra \catset$ then the loop space $LX:\kalg\lra \catset$ of $\calF$ and the jet space $JX:\kalg\lra \catset$ of $\calF$ are defined as 
$$ LX(R)=X\big(R(\!(t)\!)\big)  
\quad\mand \quad
JX(R)=X\big(R[\![t]\!]\big). $$
\begin{rem}\label{oss:rappresentabilitaJL}
The jet space and the loop space have the following representability properties which are well known and easy to prove.	
	
If $X$ is a scheme then $JX$ is a scheme and if $X$ is affine then $JX$ is also affine. Moreover if $X$ is an ind-scheme then $JX$ is also an ind-scheme, and if $X$ is an ind-affine ind-scheme then $JX$ is an ind-affine ind-scheme.

The construction of the loop space has good representability properties only in the case of affine schemes. If $X$ is an affine scheme then $LX$ is an ind-affine ind-scheme and if $X$ is an ind-affine ind-scheme then $LX$ is also an ind-affine ind-scheme although the cardinality of the indexing set gets bigger.
\end{rem}

\subsubsection{The affine Grassmannian}\label{ssez:affgrass}
If $H:\kalg\lra \gruppi$ then its affine Grassmannian is defined as the presheaf on $\kalg$ given by the quotient
$$ \graff_H=\frac{LH}{JH}(R)=\frac{LH(R)}{JH(R)}. $$
If $H$ is a smooth algebraic reductive group, by \cite[Theorem 3.4]{CesnaGrassmanniana} it is known that this presheaf is already a sheaf for the fppf topology. In the case $H=G$ is a solvable group this result is easier and follows from the following Lemma. 

\begin{lemma}\label{lem:SigmaG}
If $G$ is solvable group then there exists an ind-affine ind-scheme 
$$
\SigmaGraffine G \subset LG
$$
such that for all $k$ algebras $R$, every element in $LG(R)/JG(R)$ has a unique representative in $\SigmaGraffine G$. 
This implies that $LG/JG$ is an ind-affine ind-scheme and, in particular, a sheaf for the fppf topology.
\end{lemma}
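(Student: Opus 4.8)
The plan is to reduce everything to the structure theory of connected solvable groups and then to build $\SigmaGraffine G$ by induction along a composition series, the two base cases $\mathbb{G}_a$ and $\Gm$ being handled by explicit normal forms. First I would reduce to $G$ connected: since the idempotents of $R\llp t\rrp$ and of $R\llb t\rrb$ coincide with those of $R$, the finite constant group $\pi_0(G)$ satisfies $L\pi_0(G)=J\pi_0(G)$ as presheaves, and feeding $1\to G^\circ\to G\to \pi_0(G)\to 1$ into the inductive step below (using that each component of $G$ has a $k$-point, so a scheme section exists) gives $\SigmaGraffine G=\SigmaGraffine{G^\circ}$. For $G$ connected and solvable over the algebraically closed field $k$, Borel's structure theorem provides a filtration $1=G_0\triangleleft G_1\triangleleft\dots\triangleleft G_n=G$ by closed normal subgroups with each quotient $G_{i+1}/G_i$ isomorphic to $\mathbb{G}_a$ or to $\Gm$; moreover, as a $k$-scheme $G$ is an iterated product of affine lines and $\Gm$'s along these coordinates, so each projection $\pi\colon G_{i+1}\to G_{i+1}/G_i$ admits a section $s$ of $k$-schemes (the relevant torsors over $\mathbb{A}^1$ and $\Gm$ being trivial).

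For the base cases, $\SigmaGraffine{\mathbb{G}_a}$ is the ind-affine ind-scheme of principal parts $\sum_{i<0}a_it^i$, realized as $\colim_N \mathbb{A}^N$: every Laurent series decomposes uniquely as its principal part plus an element of $R\llb t\rrb$, so this is a normal form for $L\mathbb{G}_a/J\mathbb{G}_a=R\llp t\rrp/R\llb t\rrb$. The group $\Gm$ is the crux. Over a reduced ring a unit of $R\llp t\rrp$ has a locally constant $t$-adic valuation $n$ with $t^{-n}u\in R\llb t\rrb^\times$, so the reduced part of $\SigmaGraffine{\Gm}$ is the constant ind-scheme $\coprod_{\mathbb Z}\Spec k$. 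Over a non-reduced ring there is an additional infinitesimal contribution: every valuation-$0$ unit is, modulo $R\llb t\rrb^\times$, uniquely of the form $1+\nu$ with $\nu=\sum_{i<0}c_it^i$ a principal part with nilpotent coefficients. Uniqueness is a direct computation (if $1+\nu\equiv 1+\nu'$ then $(1+\nu')^{-1}(1+\nu)$ has only strictly negative, nilpotent terms yet lies in $R\llb t\rrb^\times$, forcing it to equal $1$); existence I would prove by dévissage along the powers of the nilradical, the key point being that at a square-zero step the cross terms between the already-constructed negative part and the new correction vanish. Thus $\SigmaGraffine{\Gm}$ is the non-reduced ind-affine ind-scheme $\coprod_{\mathbb Z}\colim_{m,N}\Spec k[c_1,\dots,c_m]/(c_1^N,\dots,c_m^N)$, and $\SigmaGraffine{T}=\SigmaGraffine{\Gm}^{\,r}$ for a split torus $T=\Gm^r$ since $LT/JT$ splits as a product.

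The inductive step assembles these. Writing $N=G_i$, $Q=G_{i+1}/G_i$, with $\pi$ and section $s$ as above, assume $\SigmaGraffine{Q}$ and $\SigmaGraffine{N}$ are normal-form ind-schemes. Given $g\in LG_{i+1}(R)$, I first reduce its image $\pi(g)\in LQ(R)$ to its normal form $\beta\in\SigmaGraffine{Q}$, so that $Ls(\beta)^{-1}g$ maps into $JQ(R)$; decomposing this element through $Ls$ writes it as $h\cdot Js(\sigma)$ with $h\in LN(R)=\ker(\pi)$ and $\sigma\in JQ(R)$, and reducing $h$ to its normal form $\eta\in\SigmaGraffine{N}$ yields $g\equiv Ls(\beta)\,\eta\pmod{JG_{i+1}}$. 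This proves existence; uniqueness follows by applying $\pi$ (which sends the representative to $\beta$, so $\beta$ is determined) and then using $JG_{i+1}\cap LG_i=JG_i$ to reduce to uniqueness for $N$. Hence the multiplication $Ls(\SigmaGraffine Q)\times\SigmaGraffine N\to LG_{i+1}$ is an injection of ind-schemes; it is a locally closed immersion because it admits the morphism inverse $x\mapsto(\pi(x),\,Ls(\pi(x))^{-1}x)$, so its image $\SigmaGraffine{G_{i+1}}$ is again an ind-affine ind-scheme. Since the base cases are ind-affine, induction produces the desired $\SigmaGraffine G\subset LG$.

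Finally, the construction provides, functorially in $R$, a bijection $\SigmaGraffine G(R)\xrightarrow{\ \sim\ }LG(R)/JG(R)$ with $\SigmaGraffine G$ an ind-affine ind-scheme, exhibiting $LG/JG$ as representable, hence as an fppf sheaf. The main obstacle is the $\Gm$ base case over non-reduced rings: the presheaf quotient is strictly larger than the naive $\underline{\mathbb Z}$, and pinning down its unique normal form (and thereby the non-reduced ind-scheme structure) is exactly what requires the nilpotent dévissage. Everything else is a careful bookkeeping of the filtered decomposition; in particular no cohomological input is needed, because the inductive step manipulates given elements through the scheme-theoretic section $s$ rather than asserting surjectivity of $LG_{i+1}\to LQ$.
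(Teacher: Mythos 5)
Your proposal is correct and follows essentially the same route as the paper: reduce to the connected case, then descend along a subnormal series using scheme-theoretic sections of the successive quotients (the paper organizes this as $G=T\ltimes U$ plus induction on the unipotent part via a normal $\mathbb{G}_a$, but the mechanism is identical), arriving at the base cases $\mathbb{G}_a$ (principal parts) and $\mathbb{G}_m$, where your normal form $t^n(1+\nu)$ with $\nu$ a nilpotent principal part, the uniqueness computation, and the existence proof by nilpotent d\'evissage with vanishing cross terms all match the paper's argument (which imports the structure of units of $R(\!(t)\!)$ from the same standard source you implicitly invoke). One wording fix: the d\'evissage must run along the powers of the ideal generated by the finitely many nilpotent coefficients of $\nu$ (finitely generated, hence nilpotent), not along the powers of the nilradical of $R$, which need not be nilpotent --- this is exactly how the paper sets it up.
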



The proof of Lemma \ref{lem:SigmaG}  will be given in the next sections: in Section \ref{ssez:riduzioneGmGa} we reduce the proof to the case of the additive and of the mutiplicative group,  and in Section \ref{prooflemGm} we will study the case of $\mG_m$, the case of $\mG_a$ being trivial.

Lemma \ref{lem:SigmaG}, together with \cite[Theorem 3.4]{CesnaGrassmanniana}, implies the following fact that is maybe worth noticing. 

\begin{cor}\label{cor:quotientSheaf} Let $H$ be a smooth algebraic group over $k$. The affine Grassmannian $\graff_H$ is a sheaf for the fppf topology.
\end{cor}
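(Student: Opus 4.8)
The plan is to deduce the general statement from the two cases already in hand—the reductive case, which is \cite[Theorem 3.4]{CesnaGrassmanniana}, and the solvable case, which is \cref{lem:SigmaG}—by a dévissage along the unipotent radical. The mechanism will be an \emph{extension lemma}: if $1\to N\to G\to Q\to 1$ is a short exact sequence of smooth affine $k$-groups and both $\graff_N$ and $\graff_Q$ are fppf sheaves, then so is $\graff_G=LG/JG$. Granting this, the corollary follows by applying it along the composition series of $H$.

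First I would set up the structure-theoretic reduction. The component group $\pi_0(H)$ is finite étale (constant, as $k$ is algebraically closed), and $\graff_{\pi_0(H)}$ is a sheaf by a direct verification; so by the extension lemma applied to $1\to H^{\circ}\to H\to \pi_0(H)\to 1$ it suffices to treat $H$ connected. For connected $H$, the structure theory of smooth affine groups over an algebraically closed field gives a short exact sequence $1\to U\to H\to \bar H\to 1$, where $U=R_u(H)$ is the unipotent radical (a smooth connected normal solvable subgroup) and $\bar H=H/U$ is connected reductive. Here $\graff_U$ is an ind-affine ind-scheme, hence an fppf sheaf, by \cref{lem:SigmaG} (unipotent groups being solvable), while $\graff_{\bar H}$ is an fppf sheaf by \cite[Theorem 3.4]{CesnaGrassmanniana}. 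Applying the extension lemma once more then gives the result.

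For the extension lemma itself, I would argue as follows. Since $N$ is normal, $G\to Q$ is an $N$-torsor, and applying the loop and jet functors yields sequences of fppf sheaves of groups $1\to LN\to LG\to LQ\to 1$ and $1\to JN\to JG\to JQ\to 1$; in particular $LG\to LQ$ is an $LN$-torsor. Using the identity $LN\cap JG=JN$ (which holds because $N$ is closed in $G$ and $R\llb t\rrb\hookrightarrow R\llp t\rrp$ is injective), the induced map $p\colon \graff_G\to \graff_Q$ has fibre over the base point equal to $LN\cdot JG/JG\cong LN/JN=\graff_N$, and more generally $\graff_Q$ is the quotient of $\graff_G$ by the residual $LN$-action. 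Because $G\to Q$ is, étale-locally on $Q$, a trivial $N$-torsor \emph{as a scheme over $Q$}, such a trivialization is carried simultaneously to $LG$ and $JG$ by functoriality, so that $p$ is an fppf-locally trivial fibration with fibre $\graff_N$. A presheaf that is an fppf-locally trivial bundle with sheaf fibre over a sheaf base is itself a sheaf, by a routine two-out-of-three descent argument: descend along $p$ to a unique section of the sheaf $\graff_Q$, lift it fppf-locally to $\graff_G$, and glue the ambiguity using the sheaf property of the fibre $\graff_N$.

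I expect the genuine obstacle to be the exactness of $1\to LN\to LG\to LQ\to 1$ as fppf sheaves—equivalently, the fppf-local surjectivity of $G\bigl(R\llp t\rrp\bigr)\to Q\bigl(R\llp t\rrp\bigr)$, or the existence of fppf-local sections of $LG\to LQ$. This does \emph{not} follow formally from the smoothness of $G\to Q$, since an fppf (or étale) cover of $R\llp t\rrp$ need not be induced by an fppf cover $R\to R'$ via $R'\llp t\rrp$; controlling precisely this descent between $\Spec R$ and $\Spec R\llp t\rrp$ is the delicate input supplied by the methods of \cite{CesnaGrassmanniana}, and it is also exactly what is needed to transport the geometric trivialization of the $N$-torsor into a local trivialization of $p$. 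Once this surjectivity is available, the identification of fibres, the local triviality of $p$, and the final gluing are all formal.
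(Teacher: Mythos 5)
Your proposal has a genuine gap, and it is exactly the one you flag at the end: the exactness of $1\to LN\to LG\to LQ\to 1$ as fppf sheaves---equivalently the (even fppf-local) surjectivity of $G\bigl(R\llp t\rrp\bigr)\to Q\bigl(R\llp t\rrp\bigr)$---is never established. Citing ``the methods of \cite{CesnaGrassmanniana}'' does not close it: that paper proves the sheaf property of $\graff_H$ for reductive $H$; it does not provide, as a black box, local sections of $LG\to LQ$ for an arbitrary extension, nor the descent between covers of $\Spec R$ and covers of $\Spec R\llp t\rrp$ that your ``local triviality of $p$'' step requires. Your intermediate claim that an \'etale-local trivialization of the $N$-torsor $G\to Q$ ``is carried simultaneously to $LG$ and $JG$ by functoriality'' is precisely where this breaks: a point $\Spec R\llp t\rrp\to Q$ need not factor through any member of an \'etale cover of $Q$, and refining \'etale-locally on $\Spec R\llp t\rrp$ produces covers that, as you note yourself, need not come from covers of $\Spec R$. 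So the extension lemma on which your whole d\'evissage rests is unproven, and for a general extension it is genuinely delicate.

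The paper avoids this issue entirely, and this is the idea you are missing. It considers only the single extension $1\to U\to H\to K\to 1$ by the unipotent radical and invokes \cite[Theorem 14.2.6]{SpringerLibro}: over the algebraically closed field $k$, the projection $H\to K$ admits a section \emph{as algebraic varieties} (equivalently, the $U$-torsor $H\to K$ is globally trivial, since $U$ is split unipotent and $K$ is affine---the same vanishing argument as in Lemma \ref{lem:induzioneunipotente}). A scheme-theoretic section is functorial in the test ring, so $H(A)\to K(A)$ is surjective for \emph{every} $k$-algebra $A$---in particular for $A=R\llp t\rrp$ and $A=R\llb t\rrb$---with no localization on $\Spec R$ whatsoever. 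Together with the identity $JU(R)=JH(R)\cap LU(R)$ inside $LH(R)$ (which you also have), this surjectivity of $LH(R)\to LK(R)$ and of $JH(R)\to JK(R)$ on the nose reduces the corollary to the elementary set-theoretic descent verification you sketch at the end, using only that $\graff_U$ is a sheaf (Lemma \ref{lem:SigmaG}) and $\graff_K$ is a sheaf (\cite{CesnaGrassmanniana}). If you replace your general extension lemma by this one special case, where the section exists globally, your argument goes through; without it, the local triviality you need is simply not available.
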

\begin{proof}
Let $U$ be the unipotent radical of $H$, then $K=G/U$ is reductive group. 
By Theorem 14.2.6 in \cite{SpringerLibro} the projection map $\pi:H\lra K$ has a section
(as algebraic varieties). This implies that for all $R$ we have
$K(R)=G(R)/U(R)$. In particular for all $R$ we have that $LG(R)\lra LK(R)$ and $JG(R)\lra JK(R)$  are surjectives. Also notice that $JU(R)=JG(R)\cap LU(R)$ inside $LG(R)$. 

Now, it is an easy set-theoretical verification that, since $R\mapsto \graff_U(R)$ is a sheaf (by Lemma \ref{lem:SigmaG})\!) and $R\mapsto \graff_K(R)$ is a sheaf (by \cite{CesnaGrassmanniana}), $R\mapsto \graff_H(R)$ is a sheaf, too. 
\end{proof}

\begin{rem}As a last remark on the affine Grassmannian of an algebraic group we notice the following. Assume $H$ is a semisimple group over $\mC$ and $B$ is a Borel subgroup of $H$. By construction we have a natural morphism $\graff_B\lra \graff_H$. By Iwasawa decomposition we have $H(\mC(\!(t)\!))=B(\mC(\!(t)\!))\cdot H(\mC[\![t]\!])$, in particular at the  level of $\mC$ points we have $\graff_B(\mC)=\graff_H(\mC)$ however the two affine Grassmannians are very different: the first one is ind-affine, has infinitely many connected components and is not reduced, while the second one is ind-projective, reduced and connected. 
\end{rem}

\subsection{Definition and statement of the main results}\label{sez:Grisolubile}
In this Section we study representability properties of different quotients of the double loop group $LLG$ of $G$ (or of $LJG$, the loop of the jet group $JG$) in the case $G$ is solvable. 
In particular $LLG(R)=G\big( R(\!(t)\!)(\!(s)\!) \big)$. Namely we consider different subgroups $H$ of $LLG$, 
and we define the quotient as the presheaf over $\mathsf{Aff}_k = \kalg^{\mathrm{op}}$ given by:
$$
\frac{LLG}{H}(R) = \frac{LLG(R)}{H(R)}.
$$
Below we list the quotients we are interested in. We will then prove that these presheaves are already fppf sheaves.

\begin{defin}\label{def:grasmmannianequozienti} 
Let $G$ be an affine smooth algebraic group over $k$. We define the following presheaves of sets on $\mathsf{Aff}_k = \kalg^{\mathrm{op}}$.
\begin{itemize}
\item \emph{The affine Grassmannian of the loop group} is the presheaf quotient $LLG/JLG$. 
In this case we  are considering $H=JLG$, so that $H(R)=JLG(R)=G\big( R[\![t]\!](\!(s)\!) \big)$. We denote it as 
$$
\graff^L_G: R \longmapsto \frac{LLG(R)}{JLG(R)}=\frac{G\big( R(\!(t)\!)(\!(s)\!) \big)}{G\big( R[\![t]\!](\!(s)\!) \big)}=\graff_{LG}(R)
$$
hence we are studying the analog of the affine Grassmannian for the loop group $LG$. 

\item \emph{The loop space of the affine Grassmannian} is the presheaf quotient $LLG/LJG$.
In this case we are considering $H=LJG$, so that $H(R)=LJG(R)=G\big( R(\!(t)\!)[\![s]\!] \big)$. We denote it as 
$$
 ^L\graff_G: R \longmapsto \frac{LLG(R)}{LJG(R)}= \frac{G\big( R(\!(t)\!)(\!(s)\!) \big)}{G\big( R(\!(t)\!)[\![s]\!] \big)}=L(\graff_{G})(R)
$$
hence we are studying the loop presheaf of the usual affine Grassmannian of $G$.

\item \emph{The Big Grassmannian} is the presheaf quotient $LLG/JJG$.
In this case we  are considering $H=JJG$, so that $H(R)=JJG(R)=G\big( R[\![t]\!][\![s]\!] \big)$. We denote it as follows: 
$$
\graffbig : R \longmapsto \frac{LLG(R)}{JJG(R)}= \frac{G\big( R(\!(t)\!)(\!(s)\!) \big)}{G\big( R[\![t]\!][\![s]\!] \big)}.
$$

\item \emph{The 2-dimensional local field affine Grassmannian}  is the presheaf quotient $LLG/G^{(2)}$, where for a given a $k$-algebra $R$ we put    
$ \calO''(R) = R[\![t]\!] + \sum_{i>0}R(\!(t)\!)s^i\subset R(\!(t)\!)[\![s]\!]$, and consider the subgroup of $LLG(R)$ given by $G^{(2)}(R)=G(\calO''(R))$. The 2-dimensional local field affine Grassmannian is denoted as  
$$
\graffraskin : R \longmapsto \frac{LLG}{G^{(2)}} (R) = \frac{LLG(R)}{G(\calO''(R))}= \frac{G\big( R(\!(t)\!)(\!(s)\!) \big)}{G\big(R[\![t]\!] + \sum_{i>0}R(\!(t)\!)s^i\subset R(\!(t)\!)[\![s]\!]\big)}
$$

\item \emph{The affine Grassmannian of the jet group} is the presheaf quotient $LJG/JJG$ and is denoted as
$$
\graff^J_G : R \longmapsto \frac{LJG(R)}{JJG(R)}=\frac{G\big( R(\!(t)\!)[\![s]\!] \big)}{G\big( R[\![t]\!][\![s]\!] \big)}=\graff_{JG}(R)
$$
which is indeed the analog of the affine Grassmannian for the jet group $JG$. 

\end{itemize}
\end{defin}	

\begin{rem}
The loop space of the affine Grassmannian was considered in the paper of Frenkel and Zhu \cite{FZ}, while the two dimensional local field Grassmannian was suggested to us by S.~Raskin (private communication).
\end{rem}

\subsubsection{Representability for solvable $G$} In the rest of this Section we prove the following results. 

\begin{thm}\label{teo:rappresentabilita}
Assume $G$ is solvable and let $G'$ be equal to $JLG$,  $LJG, JJG$ or $G^{(2)}$. Then there exists an ind-affine ind-scheme 
$$
\Sigma[G']\subset LLG
$$
such that for all $k$ algebras $R$, every element in $LLG(R)/G'(R)$ has a unique representative in $\Sigma[G'](R)$. 

A similar result holds in the case of the affine Grassmannian of the jet group $LJG/JJG$: there exists $\Sigma^{J}\subset LJG$ and $^J \Sigma_G\subset JLG$ with analogous properties. 

This implies that the Grassmannians $\graff^L_G$, $ ^L\graff_{G}$, $\graff^J_{G}$, $\graffbig$ and $\graffraskin$ from $\kalg$ to $\catset$  of Definition \ref{def:grasmmannianequozienti} are ind-affine ind-schemes and, in particular, they are sheaves for the fppf topology.
\end{thm}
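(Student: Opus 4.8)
The plan is to produce, for each quotient in question, an explicit \emph{normal form}: a sub-presheaf $\Sigma[G']\subset LLG$ (and similarly $\Sigma^J\subset LJG$, ${}^J\Sigma_G\subset JLG$) for which the composite $\Sigma[G']\hookrightarrow LLG\twoheadrightarrow LLG/G'$ is an isomorphism of presheaves and for which $\Sigma[G']$ is visibly an ind-affine ind-scheme. Once this is achieved the remaining assertions are formal, exactly as in the one-variable Lemma~\ref{lem:SigmaG}: a functorial choice of unique representatives gives an isomorphism between each Grassmannian and its normal-form ind-scheme, and an ind-scheme is automatically a sheaf for the fppf topology, so ind-representability follows. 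Following the strategy of Lemma~\ref{lem:SigmaG}, I would first reduce the construction of normal forms to the two basic groups $\mG_a$ and $\mG_m$ (Section~\ref{ssez:riduzioneGmGa}), and then treat these two cases by hand.

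For the reduction I would use that a smooth solvable $G$ is an iterated extension of copies of $\mG_a$ and $\mG_m$, each extension $1\to N\to G\to Q\to 1$ being split by a section of $k$-schemes (this is the mechanism of Theorem~14.2.6 of \cite{SpringerLibro}, already used in Corollary~\ref{cor:quotientSheaf}). Applying $A\mapsto G(A)$ to $A=R(\!(t)\!)(\!(s)\!)$ and to the subrings defining the various $G'$, the section turns each such extension into a bijection of sets $G(A)\cong N(A)\times Q(A)$, compatibly with passage to the relevant subgroups (which commute with jets and loops). Consequently the section realises $LLG/G'$ as the total space of a fibration over the $Q$-quotient with fibres the $N$-quotient, trivialised by the section; since base and fibres are ind-affine ind-schemes by the inductive hypothesis, so is the total space, and the normal forms for $N$ and $Q$ combine into a normal form for $G$. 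This reduces everything to $G=\mG_a$ and $G=\mG_m$.

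The additive case is straightforward. For $G=\mG_a$ every group in sight is the additive group of a ring, so each quotient $LL\mG_a/G'$ is a quotient of $k$-modules and a normal form is just a functorial complement of the submodule $G'(R)$ inside $R(\!(t)\!)(\!(s)\!)$. Such complements are explicit: writing an element as $\sum_{n\geq N}a_n(t)s^n$ with $a_n\in R(\!(t)\!)$, one retains in each $s$-degree the part of $a_n$ lying in a fixed complement of the relevant $t$-subspace (namely $t^{-1}R[t^{-1}]$ as a complement of $R[\![t]\!]$ in $R(\!(t)\!)$), together with the negative-$s$-degree terms whenever $G'$ imposes $s$-positivity; for $G^{(2)}$ one uses instead the complement dictated by $\calO''$. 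Each such complement is manifestly an ind-affine ind-scheme, namely a filtered colimit of affine spaces over $k$ indexed by bounds on the $s$-degree and the $t^{-1}$-degree, albeit (as already noted) over a poset that is \emph{not} countable.

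The multiplicative case is the heart of the matter, and I expect it to be the main obstacle. Here $LL\mG_m(R)=(R(\!(t)\!)(\!(s)\!))^\times$, and one must analyse units of the iterated Laurent rings and of their subrings. Over a field, $k(\!(t)\!)(\!(s)\!)$ carries a rank-$2$ valuation (order in $s$, then order in $t$ of the leading $s$-coefficient), and a unit factors as $s^{m}\,a(t)\,(1+sw)$ with $m\in\Z$, $a\in k(\!(t)\!)^{\times}$ and $w\in k(\!(t)\!)[\![s]\!]$; modulo the chosen denominator, the invariants that survive are an integer (the $t$-valuation of the leading coefficient, yielding a copy of the one-variable $\mG_m$-Grassmannian in $t$) together with an additive, pro-unipotent contribution from the higher $s$-coefficients. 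The work is to make this uniform in $R$: one must show that, Zariski-locally on $\Spec R$ (i.e.\ after a decomposition by idempotents), the valuation-type invariants are constant, so that the normal form is governed in the $t$-direction by the one-variable ind-scheme $\SigmaGraffine{\mG_m}$ of Lemma~\ref{lem:SigmaG} (constructed in Section~\ref{prooflemGm}), glued with $\mG_a$-type complements in the remaining coefficients. Assembling these pieces produces $\Sigma[G']$, $\Sigma^J$ and ${}^J\Sigma_G$; the cases $\graffbig$ and $\graffraskin$ then follow by combining the basic building blocks along the tower $JJG\subset JLG\subset LLG$ (so that $\graffbig=LLG/JJG$ fibres over $\graff^L_G$ with fibre $JLG/JJG$, whose normal form is ${}^J\Sigma_G$) and by using that $\calO''$ is exactly the rank-$2$ valuation ring. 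Controlling units over non-reduced and non-connected $R$, and verifying that the resulting normal-form functors genuinely assemble into ind-affine ind-schemes with closed transition maps, is where the real difficulty lies.
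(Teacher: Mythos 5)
Your outline reproduces the skeleton of the paper's own argument: explicit normal forms inside $LLG$, reduction of a solvable $G$ to the basic cases $\mG_a$ and $\mG_m$ via split extensions (this is exactly the paper's Lemma \ref{lem:induzionerep} combined with Lemma \ref{lem:induzioneunipotente}), idempotent decompositions of $\Spec R$ to control units of Laurent rings (the paper imports this as Lemma \ref{lem:invertibili2}), the trivial treatment of $\mG_a$ (Lemma \ref{lem:caseofGa}), and fibration arguments along the tower $JJG\subset LJG\subset LLG$ for $\graffbig$ and $\graffraskin$. So far so good. But the multiplicative case --- which you yourself flag as ``where the real difficulty lies'' and leave as a programme --- \emph{is} the proof; what you have written around it does not contain the ideas needed to complete it, so the proposal has a genuine gap rather than being an alternative route.

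Concretely, two things are missing. First, for $\graff^J_{\mG_m}=LJ\mG_m/JJ\mG_m$ one must show that, after normalizing the $s$-constant term via the one-variable case, the remaining coefficients can be corrected inductively; the induction step requires a nilpotent-perturbation statement (the paper's Lemma \ref{lem_1.13}: for $\gra=1+\gre$ with $\gre\in t^{-1}R[t^{-1}]$ nilpotent, every $\grb\in R(\!(t)\!)$ admits a unique $\grg\in R[\![t]\!]$ with $\grb-\gra\grg\in R[t^{-1}]$), which your sketch does not supply. Second, and more seriously, for $\graff^L_{\mG_m}=LL\mG_m/JL\mG_m$ the paper must prove (Proposition \ref{prop:LsigmamoduloJsigma}) that every class in $L(\Sigma_G)/J(\Sigma_G)$ has a unique representative of the form $1+\gre_1(t)s^{-1}+\dots+\gre_n(t)s^{-n}$ with each $\gre_i\in t^{-1}R[t^{-1}]$ nilpotent. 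Uniqueness forces the candidate correction to have coefficients defined by the recursion $h_\ell=\big(\gre_1h_{\ell-1}+\dots+\gre_Nh_{\ell-N}\big)_+$, and \emph{existence} then requires showing $h_\ell=0$ for $\ell\gg 0$, i.e.\ that iterated positive-part products
$$
\bigg(\gre_{i_1}\cdots\Big(\gre_{i_{M-1}}\cdot(\gre_{i_M})_+\Big)_+\cdots\bigg)_+
$$
of nilpotent Laurent series vanish once $M$ is large (the paper's Lemma \ref{lem:prodottiPartiPositive}). This is a nontrivial finiteness statement: nilpotency of the ideal generated by the $\gre_i$ bounds honest products, but the truncation $(\cdot)_+$ is not multiplicative, so truncated products are not themselves elements of a power of that ideal, and the paper needs a careful comparison with the full product plus a valuation-degree estimate to kill the error terms. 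Nothing in your valuation-theoretic picture over a field detects this issue --- over a field all the nilpotent tails $\gre_i$ vanish and the statement is empty --- and ``making it uniform in $R$ after decomposing by idempotents'' is precisely the step that the nilpotents obstruct. Without this lemma the normal form $\Sigma^L_G=\Sigma^J_G\times\Sigma^{L,-}_G$ is not shown to hit every class, so ind-representability of $\graff^L_G$, the hardest of the five cases, remains unproven.
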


Depending on which affine Grassmannian from Definition \ref{def:grasmmannianequozienti} we are considering, we denote the corresponding ind-schemes by $\Sigma^L_G$, ${}^L\Sigma_G, \Sigma_G^{\mathrm{big}}, \Sigma_G^{(2)}$, $\Sigma_G^J$. 

\begin{rem} Our proof of Theorem \ref{teo:rappresentabilita} shows that all the representative ind-schemes involved are actually \emph{ind-almost reduced} i.e. can be expressed as colimits of \emph{almost reduced} affine schemes, where an affine scheme $\mathrm{Spec}\, A$ is almost reduced if the nilradical of $A$ is nilpotent.
\end{rem}

%
%
%

\subsubsection{Reduction to the case of the multiplicative group}\label{ssez:riduzioneGmGa}
The proof of Theorem \ref{teo:rappresentabilita} and of Lemma \ref{lem:SigmaG} reduces to the case of the multiplicative group. In this section we assume the results true for $\mG_m$ and we deduce the results for any solvable group.

We start by remarking that Theorem \ref{teo:rappresentabilita}, and Lemma \ref{lem:SigmaG} are easily verified in the case of $G=\mG_a$. In fact, for Lemma \ref{lem:SigmaG} it is enough to take $\Sigma_{G}(R)=tR[t^{-1}]$, while Theorem \ref{teo:rappresentabilita}  is a consequence of the following lemma.
\begin{lem}\label{lem:caseofGa}
	Let $G=\mG_a$, and let $\graff$ be any of the Grassmannians introduced in Definition \ref{def:grasmmannianequozienti}. Then there exists an ind-affine ind-scheme $$\Sigma \subset LLG$$
	(or $\Sigma \subset LJG$, in the case of the jet Grassmannian) such that for all $k$-algebras $R$, every element in $\graff(R)$ has a unique representative in $\Sigma(R)$.
\end{lem}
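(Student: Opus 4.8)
The plan is to treat the additive group $\mG_a$ directly and explicitly, which should be elementary because $\mG_a$ is commutative and all its loop/jet functors are just the corresponding rings viewed as abelian groups. For $G=\mG_a$ we have $LLG(R)=R(\!(t)\!)(\!(s)\!)$, and each subgroup $G'$ in Definition \ref{def:grasmmannianequozienti} is a \emph{sub-$R$-module} of $R(\!(t)\!)(\!(s)\!)$, namely $JLG(R)=R[\![t]\!](\!(s)\!)$, $LJG(R)=R(\!(t)\!)[\![s]\!]$, $JJG(R)=R[\![t]\!][\![s]\!]$, and $G^{(2)}(R)=\calO''(R)$. Since the quotient is now a quotient of abelian groups (indeed of $R$-modules), finding a normal form amounts to choosing, once and for all, a complementary $R$-submodule $\Sigma(R)\subset R(\!(t)\!)(\!(s)\!)$ that maps isomorphically onto the quotient. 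The key point is that each of these ambient rings decomposes as a direct sum of the relevant subgroup and an explicit complement built from monomials $t^a s^b$ in the ``missing'' range of exponents.

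Concretely, first I would write down the complement in each case. An element of $R(\!(t)\!)(\!(s)\!)$ is a Laurent series $\sum_{b\gg-\infty} c_b(t)\, s^b$ with $c_b(t)\in R(\!(t)\!)$. For the big Grassmannian $JJG$ the complement is the span of monomials $t^a s^b$ with $a<0$ or $b<0$; following the one-variable case (Lemma \ref{lem:SigmaG} gives $\Sigma_{\mG_a}(R)=tR[t^{-1}]$ there, i.e.\ the strictly-negative-degree part) the natural choice is to take $\Sigma(R)$ to be those $\sum c_b(t)s^b$ where, for $b\ge 0$, $c_b(t)\in tR[t^{-1}]$ (negative $t$-part), and for $b<0$, $c_b(t)$ is arbitrary in $R(\!(t)\!)$. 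For $LJG$ the complement is the strictly-negative-$s$ part $\sum_{b<0}c_b(t)s^b$ with $c_b(t)\in R(\!(t)\!)$; for $JLG$ it is the negative-$t$ part coefficient-wise, i.e.\ $\sum_b c_b(t) s^b$ with each $c_b\in tR[t^{-1}]$; and for $G^{(2)}$ one takes the complement of $\calO''(R)=R[\![t]\!]+\sum_{i>0}R(\!(t)\!)s^i$, which is $tR[t^{-1}]$ in the $s^0$-coefficient together with the entire negative-$s$ part. In each case one checks by inspection that $R(\!(t)\!)(\!(s)\!)=G'(R)\oplus\Sigma(R)$ as $R$-modules, giving existence and uniqueness of the representative.

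The remaining task is to exhibit each such $\Sigma$ as an \emph{ind-affine ind-scheme} over $k$, compatibly in $R$. Here I would write $\Sigma$ as a filtered colimit along closed immersions of affine schemes: for instance in the big case, $\Sigma=\colim_{N,M}\Sigma_{N,M}$ where $\Sigma_{N,M}(R)$ consists of those elements supported on exponents $t^a s^b$ with $a\ge -N$ and $b\ge -M$ (within the prescribed complementary range), each $\Sigma_{N,M}$ being an affine space of countable dimension, hence an ind-affine ind-scheme (via the representability of loop and jet functors recalled in Remark \ref{oss:rappresentabilitaJL}); the transition maps as $N,M$ grow are closed immersions given by the inclusion of coefficient ranges. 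The same bookkeeping produces $\Sigma^L$, $^L\Sigma$, $\Sigma^{(2)}$ and the jet-Grassmannian variants $\Sigma^J\subset LJG$ and $^J\Sigma\subset JLG$. I expect the main obstacle to be purely organizational rather than conceptual: one must be careful that the indexing is over a genuinely filtered (uncountable) poset and that the complements are chosen \emph{functorially} in $R$ so that $\Sigma$ is a subfunctor of $LLG$ cut out scheme-theoretically, matching the convention that loops of ind-affine ind-schemes are again ind-affine but with a larger indexing set. Since $\mG_a$ is abelian there are no commutator or conjugation issues, so once the direct-sum decompositions and the ind-scheme structures are written out the verification is direct.
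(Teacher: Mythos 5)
Your overall route is the same as the paper's: since $\mG_a$ is additive, each quotient in Definition \ref{def:grasmmannianequozienti} is a quotient of $R$-modules, one writes down the evident complementary submodule, and one then has to present that complement as an ind-affine ind-scheme. The complements you list are all correct (e.g.\ $t^{-1}R[t^{-1}](\!(s)\!)$ for $JLG$, the strictly negative $s$-part for $LJG$, and so on), and the direct-sum verification is indeed immediate. The genuine gap is in the ind-scheme presentation, which is the real content of the lemma. Your proposed $\Sigma=\colim_{N,M}\Sigma_{N,M}$, with \emph{uniform} bounds $a\ge -N$, $b\ge -M$ on the exponents of the monomials $t^as^b$, is a colimit over the countable poset $\mZ^2$, so $\Sigma(R)=\bigcup_{N,M}\Sigma_{N,M}(R)$ consists only of elements whose $t$-exponents are bounded below \emph{uniformly in the $s$-degree}. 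This is strictly smaller than the complement in exactly the cases $\graff^L_G$, $\graff^{\mathrm{big}}_G$ and $\graff^J_G$ (it is fine for ${}^L\graff_G$ and $\graff^{(2)}_G$, where each element of the complement involves only finitely many $s$-coefficients and hence admits a uniform bound). For instance, in the big case --- the very example you chose to spell out --- the element $f=\sum_{b\ge 1}t^{-b}s^b$ lies in the complement but in no $\Sigma_{N,M}(R)$; worse, no other representative of the class $[f]\in LLG(R)/JJG(R)$ lies in your $\Sigma$ either, since adding an element of $R[\![t]\!][\![s]\!]$ never changes the coefficients of $t^as^b$ with $a<0$. So uniqueness survives (your $\Sigma$ sits inside the true complement), but existence of representatives fails, and the lemma is not proved for these three Grassmannians.

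The repair is what the paper does: let the bound on the $t^{-1}$-degree depend on the $s$-degree. Index by pairs $(h,\alpha)$ with $h\in\mZ$ a lower bound for the $s$-order and $\alpha$ a \emph{function} $\mZ\to\mZ$, and let $\Sigma_{h,\alpha}(R)$ consist of the series $\sum_{i\ge h}f_i(t^{-1})s^i$ with $f_i\in t^{-1}R[t^{-1}]$ of degree at most $-\alpha_i$ (in the big case one additionally allows, in the finitely many negative $s$-degrees, arbitrary coefficients in $R(\!(t)\!)$ with $t$-order bounded below in terms of $\alpha$). Each $\Sigma_{h,\alpha}$ is an affine space, the transition maps are closed immersions, the poset of pairs $(h,\alpha)$ is filtered, and now every element of the complement lies in some $\Sigma_{h,\alpha}(R)$. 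Note that this poset is genuinely uncountable; your closing sentence gestures at uncountability, but it is inconsistent with your own $\mZ^2$-indexed construction. The uncountability is not optional bookkeeping: it is forced by elements such as $f$ above, and it is precisely why the paper insists that these ind-schemes are not indexed by countable posets.
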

\begin{proof}
	All Grassmannians are treated similarly, and we only give details for the case of $\graff^L$. Recall that by definition we have
	\[ \graff^L(R) = \frac{R(\!(t)\!)(\!(s)\!)}{R[\![t]\!](\!(s)\!)} .\]
	If we set $\Sigma_G^L(R) = t^{-1}R[t^{-1}](\!(s)\!)$, it is clear that every element $f \in \graff^L(R)$ has a unique representative in $\Sigma_G^L(R)$, as desired. Moreover, $\Sigma_G^L$ can be represented by an ind-affine ind-scheme as follows. Let $h$ be an integer, and let $\gra$ be a function $\mZ \to \mZ$. Denote by
	$ (\Sigma_G^L)_{h,\gra}(R) $ the set of series of the form
	\[ \sum_{i\geq h} f_i(t^{-1})s^i \]
	where $f_i(t^{-1}) \in t^{-1}R[t^{-1}]$ has degree at most $-\gra_{i}$. The functor $ R \mapsto  (\Sigma_G^L)_{h,\gra}(R)$ is obviously represented by an affine scheme, with algebra of functions given by
\[ k[x_{i,j}: i\geq -h \mand  \gra_i \leq j\leq -1] \]
where the coordinates $x_{i,j}$ corresponds to the coefficients of $f_i$.
On the sets of all couples $(h,\gra)$, we define a partial order where $(h,\gra) \geq (h', \gra')$ if and only if $h \geq h'$ and $\gra_i \leq \gra_i'$ for all $i$.
The functor $\Sigma_G^L$ is then the colimit of the $(\Sigma_G^L)_{h,\gra}$ over this poset.
\end{proof}


The reduction will also be based on the following two lemmas.
\begin{lemma}\label{lem:induzionerep}
		Let $H$ be a normal subgroup of $G$, and let $K=G/H$. Assume that the projection map $G \to K$ admits a section (as algebraic varieties). Suppose that Lemma \ref{lem:SigmaG} (or Theorem \ref{teo:rappresentabilita}) holds for the groups $H$ and $K$. Then it also holds for $G$.
	\end{lemma}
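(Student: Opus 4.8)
The plan is to perform a dévissage along the exact sequence $1\to H\to G\xrightarrow{\pi}K\to 1$, using the section to split every construction compatibly. I will treat the quotient $LG/JG$ of Lemma \ref{lem:SigmaG} in detail; each of the quotients of Theorem \ref{teo:rappresentabilita} is handled in exactly the same way, replacing the pair $(LG,JG)$ by the relevant pair (ambient double-loop group, subgroup) — for instance $(LLG,JLG)$ or $(LJG,JJG)$ — because in every case both members are obtained by evaluating $G$ (resp. $H$, $K$) on a subring of $R(\!(t)\!)(\!(s)\!)$ that is functorial in $R$, and a section $\varsigma\colon K\to G$ of $\pi$ as $k$-schemes induces a section at each such level. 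First I would record the consequences of $\varsigma$: for any $k$-algebra $A$ the map $g\mapsto(\pi(g),\varsigma(\pi(g))^{-1}g)$ is a bijection $G(A)\xrightarrow{\sim}K(A)\times H(A)$ with inverse $(a,b)\mapsto\varsigma(a)b$ (note that $\varsigma$ is used only as a map of schemes, never as a homomorphism). Evaluating at $A=R(\!(t)\!)$ and $A=R[\![t]\!]$ gives isomorphisms of ind-schemes $LG\cong LK\times LH$ and $JG\cong JK\times JH$, compatible with $\pi$ and $\varsigma$; and since $H\hookrightarrow G$ is a closed immersion while $R[\![t]\!]\hookrightarrow R(\!(t)\!)$ is injective, one also obtains the identity $LH(R)\cap JG(R)=JH(R)$ inside $LG(R)$.

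For existence of normal forms, take $g\in LG(R)$. By the hypothesis for $K$ there is a unique $\sigma_K\in\Sigma_K(R)$ with $\pi(g)=\sigma_K\,j_K$ for some $j_K\in JK(R)$. Since $\varsigma(j_K)\in JG(R)$, replacing $g$ by $g\,\varsigma(j_K)^{-1}$ does not change its class in $LG(R)/JG(R)$ and arranges $\pi(g)=\sigma_K$; then $g'':=\varsigma(\sigma_K)^{-1}g$ lies in $\ker(\pi)=LH(R)$, so $g\,JG(R)=\varsigma(\sigma_K)\,g''\,JG(R)$. Applying the hypothesis for $H$ to $g''$ yields $\sigma_H\in\Sigma_H(R)$ with $g''\,JH(R)=\sigma_H\,JH(R)$, and as $JH(R)\subset JG(R)$ we conclude $g\,JG(R)=\varsigma(\sigma_K)\,\sigma_H\,JG(R)$. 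This identifies the candidate set of representatives as $\Sigma_G:=\varsigma(\Sigma_K)\cdot\Sigma_H$, the image of $(\sigma_K,\sigma_H)\mapsto\varsigma(\sigma_K)\sigma_H$.

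For uniqueness, suppose $\varsigma(\sigma_K)\sigma_H$ and $\varsigma(\sigma_K')\sigma_H'$ represent the same class. Applying $\pi$ and using $\pi(\sigma_H)=\pi(\sigma_H')=1$ shows that $\sigma_K,\sigma_K'$ represent the same class of $LK(R)/JK(R)$, whence $\sigma_K=\sigma_K'$ by uniqueness for $K$; cancelling $\varsigma(\sigma_K)$ on the left leaves $\sigma_H^{-1}\sigma_H'\in LH(R)\cap JG(R)=JH(R)$, so $\sigma_H=\sigma_H'$ by uniqueness for $H$. Finally, representability is immediate from the splitting: under the isomorphism $LG\cong LK\times LH$ above, $\Sigma_G$ corresponds precisely to $\Sigma_K\times\Sigma_H$, which is an ind-affine ind-scheme and a closed sub-ind-scheme of $LK\times LH$; transporting back, $\Sigma_G$ is an ind-affine ind-scheme sitting inside $LG$, as required.

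The point to watch is that $JG$ — and, in Theorem \ref{teo:rappresentabilita}, each subgroup $G'$ — is \emph{not} normal in the ambient loop group, so throughout we manipulate coset spaces rather than group quotients. The section $\varsigma$ is exactly what rescues the argument: it supplies both the product decomposition of the ambient loop group (driving existence) and the intersection identity (ambient for $H$) $\cap$ (subgroup for $G$) $=$ (subgroup for $H$) (driving uniqueness). The only genuine verification is therefore that these two facts persist for each quotient of Theorem \ref{teo:rappresentabilita}, which they do because the relevant subrings of $R(\!(t)\!)(\!(s)\!)$ are functorial in $R$ and stable under the induced section — the mildest case being $G^{(2)}$, where $\mathcal O''(R)$ is a functorial subring and $\varsigma$ carries $K(\mathcal O''(R))$ into $G(\mathcal O''(R))$.
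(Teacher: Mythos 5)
Your proof is correct and takes essentially the same route as the paper's: the section $\varsigma$ induces sections of the loop/jet-level surjections (giving the split exact sequences $1\to LLH \to LLG \to LLK \to 1$ etc.), and the representative set is defined as the product $\varsigma(\Sigma_K)\cdot \Sigma_H$ inside the ambient group, identified with $\Sigma_K\times\Sigma_H$ as an ind-affine ind-scheme. The paper dismisses the verification as ``straightforward to check''; your proposal simply carries that check out explicitly (existence via lifting along $\varsigma$, uniqueness via the intersection identity $LH(R)\cap JG(R)=JH(R)$), which is exactly the intended argument.
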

\begin{proof}
	The proof is the same for all Grassmannians. For example, suppose Theorem \ref{teo:rappresentabilita} for $\graff_G^{L}$ holds for $H$ and $K$. Let $\varphi$ be the section of $G \to K$. Notice that $\varphi$ induces maps
	\[ LL(\varphi) : LLK \to LLG, \quad \quad LJ(\varphi) : LJK \to LJG \]
	which are sections of $LLG \to LLK$ and $LJG \to LJK$, respectively. In particular, we have exact sequences
	\[  1 \to LLH \to LLG \to LLK \to 1 \]
	and 
	\[ 1 \to LJH \to LJG \to LJK \to 1. \]
	For every $k$-algebra $R$, we now set 
	$$\Sigma^L_{G}(R)= LL(\grf)(\Sigma_{K}^L(R))\times \Sigma_{H}^L(R).$$
	This defines a subsheaf of $LLG$, and it is straightforward to check that it has the desired property. 
\end{proof}

\begin{lemma}\label{lem:induzioneunipotente}
Let $G$ a unipotent algebraic group of positive dimension, then there exists a normal subgroup $H$ isomorphic to $\mG_a$ and a subvariety $X$ of $G$, such that the multiplication map $m:X\times H\lra G$ is an isomorphism of algebraic varieties.
\end{lemma}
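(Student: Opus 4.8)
The plan is to reduce the statement to two facts: first, that $G$ contains a normal subgroup $H\cong\mG_a$, and second, that the quotient map $\pi\colon G\to K:=G/H$ admits a section \emph{as a variety} (i.e. a morphism of schemes, not of groups). Granting these, I would set $X:=s(K)$ for such a section $s$. Since $\pi\circ s=\id_K$, the map $\pi$ restricts to an isomorphism $X\xrightarrow{\sim}K$, and one checks directly that $m\colon X\times H\to G$, $(x,h)\mapsto xh$, is an isomorphism of varieties, with inverse $g\mapsto\big(s(\pi(g)),\,s(\pi(g))^{-1}g\big)$; this is a morphism of schemes, so $m$ is an isomorphism. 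Normality of $H$ guarantees that left and right cosets agree, so the decomposition is unambiguous.

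First I would produce the normal copy of $\mG_a$. A unipotent group is nilpotent, so (taking $G$ connected, as in our applications) the last nontrivial term of its lower central series is a nontrivial connected \emph{central} closed subgroup, hence positive-dimensional; in particular $Z(G)$ contains a smooth connected commutative unipotent group $A$ with $\dim A>0$. Over the perfect (indeed algebraically closed) field $k$ such an $A$ is \emph{split}, and therefore contains a subgroup $H\cong\mG_a$. Being contained in $Z(G)$, this $H$ is central, and in particular normal, in $G$.

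It then remains to construct the varietal section $s$. The quotient $K=G/H$ is again a unipotent, hence affine, algebraic group, and the projection $\pi\colon G\to K$ exhibits $G$ as a torsor over $K$ under the normal subgroup $H\cong\mG_a$ acting by translation. Since $\mG_a$-torsors are classified by $\rH^1(K,\cO_K)$, and $K$ is affine so that $\rH^1(K,\cO_K)=0$, this torsor is trivial; a trivialisation is precisely a section $s\colon K\to G$ of $\pi$, which is what we need.

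The only genuinely nontrivial inputs here are structural. The triviality of the torsor is immediate from the affineness of $K$ together with the vanishing of coherent $\rH^1$, so the one point deserving real care is the existence of a normal $\mG_a$ inside $G$: in characteristic zero this is transparent, since commutative unipotent groups are vector groups, but in characteristic $p$ it rests on the theorem that smooth connected unipotent groups over a perfect field are split. I expect the main content of the argument to sit exactly there, while everything else is a formal manipulation of the torsor decomposition.
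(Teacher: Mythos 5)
Your proof is correct, and its second half coincides with the paper's: both obtain the section $s$ of $\pi\colon G \to K := G/H$ by observing that $G$ is an $H=\mG_a$-torsor over the affine scheme $K$, that such torsors are controlled by the coherent cohomology group $H^1(K,\cO_K)$, and that this group vanishes because $K$ is affine. (You additionally write out the inverse of $m$ explicitly, which the paper leaves implicit; that verification is fine.) Where you genuinely differ is in the construction of the normal $\mG_a$. The paper embeds $G$ into the group $U_n$ of upper unitriangular matrices, takes its standard filtration $F_i$ by normal subgroups with $F_i/F_{i+1}\cong \mG_a$, lets $i$ be maximal with $\dim(F_i\cap G)>0$, and takes $H$ to be the identity component of $F_i\cap G$: this is a one-dimensional connected unipotent group, hence isomorphic to $\mG_a$ (Springer, Prop.\ 3.1.3), and it is normal in $G$ because $F_i$ is normal in $U_n$. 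You instead take the last nontrivial term of the lower central series, which is a smooth connected commutative \emph{central} subgroup of positive dimension, and invoke the splitting theorem for smooth connected unipotent groups over a perfect field to extract a $\mG_a$ inside it. Your route yields the stronger conclusion that $H$ is central, but at the cost of a heavier input (splitness, which in characteristic $p$ is a genuine theorem), while the paper's route needs only the elementary classification of one-dimensional connected unipotent groups.

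The one caveat is connectedness: the lemma as stated does not assume $G$ connected, and your argument needs it (for disconnected $G$ the terms of the lower central series need not be connected; for abelian disconnected $G$ the last nontrivial term is $G$ itself), whereas the paper's construction of $H$ works verbatim for disconnected $G$. This restriction is harmless for the paper's purposes, since the lemma is only applied after the reduction to connected $G$ in the proof of Theorem \ref{teo:rappresentabilita}, and you flag it explicitly; but strictly speaking your proof establishes slightly less than what the statement claims.
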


\begin{proof} Since $G$ is unipotent, we may realize $G$ as an algebraic subgroup of the group $U_n$ of upper triangular $n\times n$ matrices with
$1$ on the diagonal. The group $U_n$ has a decreasing filtration $F_i$ by normal subgroups  such that $F_i/F_{i+1}$ is isomorphic to $\mG_a$. Consider the maximal $i$ such that $F_i\cap G$ has positive dimension, and let $H$ be the identity component of $F_i\cap G$. Then $H$ is a connected unipotent subgroup of dimension one. This implies that $H$ is isomorphic to $\mG_a$, see for example \cite{SpringerLibro} Proposition 3.1.3. 


Now notice that $G\lra G/H$ is a $H=\mG_a$-torsor for the smooth topology (hence for the \'etale topology) over the affine algebraic group $G/\mG_a$, and that the \'etale cohomology with coefficient in $\mG_a=\calO$ is the \'etale cohomology of a coherent sheaf, hence it vanishes on affine schemes. In particular the torsor is trivial, and the projection $G\lra G/H$ has a section (as a map of algebraic varieties).  
\end{proof}

\begin{proof}[Proof of Lemma \ref{lem:SigmaG} and Theorems \ref{teo:rappresentabilita},  assuming they are true for $G=\mG_m$.]

 We start by remarking that if $G$ is not connected, then by letting $H$ be the connected component of the identity, we may apply Lemma \ref{lem:induzionerep} to the case of $K=G/H$. So we can assume $G$ to be connected.
 
 As $G$ is solvable, we can write $G=T\ltimes H$ with $H$ unipotent and $T$ a maximal torus. Using Lemma \ref{lem:induzionerep}, it suffices to prove the claims for unipotent groups and for tori. 

  Suppose $G$ is unipotent. If $G$ is of dimension one then is isomorphic to $\mG_a$, see for example \cite{SpringerLibro} Proposition 3.1.3. 
   The case of $G$ unipotent of dimension bigger than one follows from Lemma \ref{lem:induzioneunipotente} and Lemma \ref{lem:induzionerep}, arguing by induction. 
	
   Finally, if $G$ is a torus the claims follows from the case of $\mG_m$ and Lemma \ref{lem:induzionerep}.
\end{proof}

\subsection{The case of $G=\mG_m$}
In this Section we prove Theorems \ref{teo:rappresentabilita} and Lemma \ref{lem:SigmaG} in the case of the multiplicative group. Throughout this section, we set $G=\mG_m$. 
We start by stating a technical result about invertible elements in the ring of Laurent series. The following is \cite[Lemma 0.7]{CCC}.

\begin{lem}\label{lem:invertibili2}
	Let $B$ be any ring. An element $f \in B(\!(s)\!)$ is invertible if and only if there exists a decomposition $B = B_1 \times \cdots \times B_h$ and $f= f_1+ \dots + f_h$ with
	$$
	f_i = \sum_{j } a_{ij} s^j \in B_i(\!(s)\!)
	$$
	such that for each $i$ from $1$ to $h$, there exists $d_i$ such that $a_{id_i}$ is an invertible element of $B_i$ and the coefficients $a_{ij}$ with $j < d_i$ are nilpotent.
	If we assume the $d_i$ to be distinct then these decompositions are unique.
\end{lem}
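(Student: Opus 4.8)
The plan is to prove the two implications separately, with essentially all the content residing in the ``only if'' direction.

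For the ``if'' direction I would first use the splitting $B = B_1 \times \cdots \times B_h$ to reduce to showing each $f_i$ is invertible in $B_i(\!(s)\!)$, since $B(\!(s)\!) \simeq \prod_i B_i(\!(s)\!)$ and a tuple is a unit precisely when each entry is. On the factor $B_i$ I would decompose $f_i = p_i + q_i$ with $p_i = \sum_{j \ge d_i} a_{ij} s^j$ and $q_i = \sum_{j < d_i} a_{ij} s^j$. Writing $p_i = s^{d_i}(a_{id_i} + a_{i,d_i+1}s + \cdots)$, the second factor is a power series with invertible constant term, hence a unit in $B_i[\![s]\!]$, so $p_i$ is a unit in $B_i(\!(s)\!)$; meanwhile $q_i$ is a finite sum of the elements $a_{ij}s^j$ with $a_{ij}$ nilpotent, so $q_i$ is nilpotent, and $f_i = p_i(1 + p_i^{-1} q_i)$ is a product of units. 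This direction is routine.

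The substance is the converse, for which the organizing device is the order function. For $\mathfrak p \in \Spec B$, reduction modulo $\mathfrak p$ sends $f$ to a unit $\bar f \in \kappa(\mathfrak p)(\!(s)\!)$, a field of Laurent series; since $\bar f \ne 0$ it has a well-defined valuation $d(\mathfrak p) := \mathrm{ord}_{\mathfrak p}(f)$, the least $j$ with $a_j \notin \mathfrak p$. Writing $f = \sum_{j \ge N} a_j s^j$, one sees at once that $\{d \le d_0\} = \bigcup_{j \le d_0} D(a_j)$ is open and $\{d \ge d_0\} = V(a_N, \dots, a_{d_0-1})$ is closed. The crucial point, and the main obstacle, is to upgrade this semicontinuity to local constancy of $d$.

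Here I would exploit the inverse $g = f^{-1} = \sum_{k \ge M} b_k s^k$. Because $\kappa(\mathfrak p)(\!(s)\!)$ carries the $s$-adic discrete valuation and $\bar f \bar g = 1$, valuations add, so $e(\mathfrak p) := \mathrm{ord}_{\mathfrak p}(g) = -d(\mathfrak p)$. Applying the previous observation to $g$ shows $\{e \le e_0\}$ is open, i.e. $\{d \ge d_0\}$ is open for every $d_0$. Hence each $\{d \ge d_0\}$ is simultaneously closed (from the coefficients of $f$) and open (from those of $g$), so it is clopen, and every level set $\{d = d_0\} = \{d \ge d_0\} \setminus \{d \ge d_0 + 1\}$ is clopen. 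Since $N \le d \le -M$ by boundedness of $f$ and $g$, the function $d$ takes finitely many values $d_1 < \cdots < d_h$ and partitions $\Spec B$ into clopen pieces; converting the corresponding orthogonal idempotents into a ring decomposition yields $B \simeq B_1 \times \cdots \times B_h$ with $\Spec B_i = \{d = d_i\}$. It then remains to read off the coefficient conditions: on $B_i$ the order is identically $d_i$, so $a_{ij} \in \mathfrak p$ for all $\mathfrak p$ when $j < d_i$, forcing $a_{ij}$ into the nilradical (hence nilpotent), while $a_{id_i} \notin \mathfrak p$ for all $\mathfrak p$ forces $a_{id_i}$ to be a unit. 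Finally, for uniqueness when the $d_i$ are distinct, I would note that any decomposition of the required form must have $\mathrm{ord}_{\mathfrak p}(f) = d_i$ at every $\mathfrak p \in \Spec B_i$, so $\Spec B_i = \{d = d_i\}$ is pinned down by $f$ alone; distinctness of the $d_i$ then makes the clopen partition, hence the idempotents cutting out the $B_i$, unique.
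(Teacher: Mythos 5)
Your proof is correct, and it is essentially the same argument underlying the paper's reference (the paper itself does not reprove the lemma but cites it as \cite[Lemma 0.7]{CCC}; the authors' draft proof proceeds exactly this way): your clopen level sets $\{d=\ell\}$ coincide with the sets $X_\ell = V(a_N,\dots,a_{\ell-1},\,b_M,\dots,b_{-\ell-1}) \subset \Spec B$ cut out by the coefficients of both $f$ and $f^{-1}$, which are shown there to form a disjoint finite closed cover of $\Spec B$ and hence yield the idempotent decomposition. The only difference is presentational — you organize the mechanism as upper/lower semicontinuity and local constancy of the order function $d(\mathfrak p)$, whereas the reference verifies directly that the $X_\ell$ are closed, disjoint, and covering — so no gap to report.
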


\subsubsection{Proof of Lemma \ref{lem:SigmaG}}\label{prooflemGm}
\label{indscmostGr0}
For $G=\mG_m$, Lemma \ref{lem:SigmaG} is a consequence of the ind-schematic description of $LG$ given in \cite[Lemma 2.1]{Os-Zhu}. We provide here an explicit proof that  will be used in the proof of Theorem \ref{teo:rappresentabilita}. For $m\in \mZ$, consider the sub-presheaf of $LG$ given by
\begin{equation}\label{onelabel}
\Sigma^{}_{G,m}(R)=\{f(t)\in R(\!(t)\!)^*: f \text{ is of the form } t^m+t^{m-1}\gre(t)\text{ where }\gre(t)\in R[t^{-1}] \text{ is nilpotent}\}
\end{equation}
It is easily seen that $\Sigma_{G,m}$ is an ind-affine scheme. The disjoint union as sheaves (not as presheaves) of these indschemes 
is also an ind-affine scheme that we will denote by $\Sigma_G$. 

We now prove that, for all $R$, every element $LG(R)/JG(R)$ has a unique representative in $\Sigma_G(R)$. 

By Lemma \ref{lem:invertibili2} given $f(t)\in R(\!(t)\!)^*$ there exists a decomposition $R=R_1\times \cdots \times R_h$ such that $f=f_1+\dots+f_h$ and $f_i(t)\in R_{i}(t)$
is of the form 
$$
f_i(t)=t^{m_i}+t^{m_{i}-1}\gre_i(t)+t^{m_{i}+1}g_i(t)
$$
where $g_i(t)\in R_i[\![t]\!]$ and $\gre_i(t)\in R_i[t^{-1}]$ is nilpotent. Moreover if the $m_i$ are distinct then this decomposition is uniquely determined up to permutation. 

Therefore, it remains to prove that, given $f(t)\in R(\!(t)\!)$ 
$$
f(t)=t^{m}+t^{m-1}\gre (t)+t^{m+1}g(t)
$$
with $g(t)\in R[\![t]\!]$ and $\gre(t)\in R[t^{-1}]$ nilpotent, there exists a unique $a(t)\in R[\![t]\!]^*$ such that $f(t)a(t)\in \Sigma_{G,m}(R)$. In this proof we can assume $m=0$. 

\emph{Uniqueness of $a(t)$}. We prove that if  $f(t)=1+\gre_{-1}t^{-1}+\dots+\gre_{-n}t^{-n}$ and $a(t)\in R[\![t]\!]^*$ is such that $f(t)a(t)\in \Sigma_{G,m}(R)$ then $a=1$. Let $I$ be the ideal generated by $\gre_{-1},\dots, \gre_{-n}$. There exists $k$ such that $I^k=0$. If $a(t)=\sum a_i t^i$ and $f(t)a(t)\in \Sigma_{G,m}$ we immediately see that $m=0$ and that
\begin{align*}
a_1&=-\gre_{-1}a_2-\dots-\gre_{-n}a_{n+1}\\
a_2&=-\gre_{-1}a_3-\dots-\gre_{-n}a_{n+2}\\
\cdots& 
\end{align*}
Hence we obtain $a_i\in I$ for all $i$ and inductively we have $a_i\in I^h$ for all $h$. Hence $a_i=0$ for all $i>0$. Finally we have $1=a_0$.

\emph{Existence of $a(t)$}. Let 
$$
f(t)=1+\gre_{-1}t^{-1}+\dots+\gre_{-n}t^{-n}+\sum_{i>0}c_it^i
$$
and let $I$, as above, be the ideal generated by the $\gre_i$ so that there exists $k$ be such that $I^k=0$. In order to argue by induction on $k$ we prove a stronger statement: there exists $a(t)$ such that 
$$
f(t)a(t)=1+\tilde \gre_{-1}t^{-1}+\dots+\tilde \gre_{-N}t^{-N}
$$
with $\tilde \gre_i\in I$. 

For $k=1$ the claim is trivial since $f(t)\in R[\![t]\!]^*$. Assume now $k>1$, let $\overline R:=R/I^{k-1}$ and let $\overline f$ be the reduction of $f$ modulo $I^{k-1}$. By induction there exists $ b(t)\in R[\![t]\!]^*$ such that $\overline f(t)\overline b(t)$ has the required form. Hence 
$$
f(t)b(t)=c_0'+\gre''_{-1}t^{-1}+\dots+\gre''_{-n}t^{-n}+\sum_{i>0}c'_it^i
$$
where $\gre''_i\in I$, $c_i'\in I^{k-1}$ and $c_0'\in R^*$. Up to multiplying by the inverse of $c_0'$, we can also assume that $c_0'=1$. Notice that  $1-c(t)=1-\sum_{i>0} c_i'(t)t^i$ is an invertible element of $R[[t]]$ and that
$$f(t)b(t)(1-c(t))=c_0''+\gre''_{-1}t^{-1}+\dots+\gre''_{-n}t^{-n}.$$
with $c_0''$ invertible and $\gre_i''\in I$. Finally, we may multiply by the inverse of $c_0''$.\hfill\qedsymbol

\subsubsection{Proof of Theorem \ref{teo:rappresentabilita} in the case of $\graff^J_G$}
We will now prove Theorem \ref{teo:rappresentabilita} in the case of $G=\mG_m$ (as  already noticed this implies the Theorem for any solvable $G$). We analyze the possible Grassmannians one by one. We start with the affine Grassmannian of the jet group $JG$. We give the details of the ind-scheme structure only in this case. All the other cases are analogous. 

We start by defining a family of infinite dimensional subschemes of $LJG$ as follows. Fix an integer $m$, a positive number $p$, and a function $\gra : \mN \to \mZ$.

We let $\Sigma^J_{G,m,p,\gra}(R)$ be the set of series $f(t,s)\in R(\!(t)\!)[\![s]\!]$ of the form $t^m+\sum_{i,j} f_{ij}t^is^j$, where the sums runs over $\gra_j \leq i < m$ and $j\geq 0$, and such that $(f_{i0})^p=0$ for all $i$. It is immediate that we have an isomorphism
$$
\Sigma^J_{G,m,p,\gra}=\Spec\left( \frac{k[x_{ij} \colon  j\geq 0,\quad \gra_i\leq i<m]}{\left(x_{i0}^p=0\text{ for } i< m \right)} \right).
$$
In particular, $\Sigma^J_{G,m,p,\gra}$ is an affine scheme. 

Let us denote by $\mathcal{I}$ the set of couples $(p,\gra)$ as above. We say that $(p,\gra)\geq(q,\grb)$ if 
$p\geq q$ and $\gra_i \leq \grb_i$ for every $i$. Notice that, for a fixed $m$, if $(p,\gra)\geq(q,\grb)$ then $\Sigma^J_{G,m,p,\gra} \subset \Sigma^J_{G,m,q,\grb}$ as a closed subscheme. The set $\mathcal{I}$ is filtered, and we denote by $\Sigma^J_{G,m}$ the ind-affine ind-scheme
$$
\Sigma^J_{G,m}=\colim_{\mathcal I} \Sigma^J_{G,m,p,\gra}.
$$
We denote by $\Sigma^J_{G}$ the disjoint union of the ind-schemes $\Sigma^J_{G,m}$. Notice 
that $\Sigma^J_{G}$ can also be realized as the filtered colimit over $\mathcal I$ of the schemes
$$
\tilde{\Sigma}^J_{G,p,\gra}=\bigsqcup_{m=-p}^p \Sigma^J_{G,m,p,\gra}.
$$
hence it is an ind-affine ind-scheme. We now prove that every class in $LJG(R)/JJG(R)$ has a unique representative in $\Sigma^J_G(R)$. This will prove Theorem \ref{teo:rappresentabilita} for the Grassmannian of the jet group. 

The evaluation in $s=0$ define e map $p:LJG/JJG\lra LG/JG$. This map has a section induced by the inclusion $R(\!(t)\!)\subset R(\!(t)\!)[\![s]\!]$. Hence, given a class of an invertible series $f=\sum f_i(t)s^i$ in $R(\!(t)\!)[\![s]\!]$, we may assume that $f(t,0)\in \Sigma_{G}(R)$. In particular, up to decomposing $R$ as a product, we can use the description (\ref{onelabel}) of $\Sigma_{\mG_m}$ given in the proof of Lemma \ref{lem:SigmaG}, and further assume that $f=f_0(t)+\sum_{i>0}f_i(t)s^i$ with $f_0(t)\in \Sigma_{G,m}$ and $f_i(t)\in R(\!(t)\!)$. Therefore, we are left to prove that there exists a unique 
$h=\sum_{i\geq 0} h_i(t)s^i$ with $h_i\in R[\![t]\!]$ such that $f\cdot h\in\Sigma^J_{G,m}(R)$. 

We need the following lemma.

\begin{lem}\label{lem_1.13}
	Let $\gre\in t^{-1}R[t^{-1}]$ be nilpotent and let $\gra=1+\gre\in R[t^{-1}]$. Then for all $\grb\in R(\!(t)\!)$ there exists a unique $\grg\in R[\![t]\!]$ such that $\grb-\gra\grg\in R[t^{-1}]$
\end{lem}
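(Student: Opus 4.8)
The plan is to reduce the lemma to the invertibility of an explicit $R$-linear operator on $R[\![t]\!]$ that differs from the identity by a nilpotent one. First note that $\gra=1+\gre$ is a unit of $R[t^{-1}]$: since $\gre$ is nilpotent, $\gra^{-1}=\sum_{j\ge 0}(-\gre)^{j}$ is a finite sum lying in $R[t^{-1}]$. Inside $R(\!(t)\!)$ every element has only finitely many negative-degree terms, so the $R$-linear truncation $\pi\colon R(\!(t)\!)\to R[\![t]\!]$, $\sum_i a_it^i\mapsto \sum_{i\ge 0}a_it^i$, has kernel exactly $t^{-1}R[t^{-1}]$ and yields a direct sum decomposition $R(\!(t)\!)=R[\![t]\!]\oplus t^{-1}R[t^{-1}]$. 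Thus, seeking $\grg\in R[\![t]\!]$ with $\grb-\gra\grg$ supported in strictly negative degrees (i.e.\ in $t^{-1}R[t^{-1}]$, which is the complement pinning $\grg$ down uniquely) is exactly solving $\pi(\gra\grg)=\pi(\grb)$.

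Next I would introduce $\Phi\colon R[\![t]\!]\to R[\![t]\!]$, $\Phi(\grg)=\pi(\gra\grg)$. Because $\pi$ restricts to the identity on $R[\![t]\!]$, we get $\Phi(\grg)=\grg+\pi(\gre\grg)=(\id+N)(\grg)$, where $N(\grg):=\pi(\gre\grg)$. The lemma then amounts to showing that $\Phi$ is a bijection: given this, $\grg:=\Phi^{-1}(\pi(\grb))$ is the unique element of $R[\![t]\!]$ with $\pi(\gra\grg)=\pi(\grb)$, hence the unique one with $\grb-\gra\grg\in t^{-1}R[t^{-1}]\subset R[t^{-1}]$.

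The crux — and the step I expect to be the main obstacle — is proving that $N$ is nilpotent; one cannot simply invoke $\gre^{k}=0$, because the truncation $\pi$ sitting inside $N$ prevents $N^{\,l}$ from equalling $\pi(\gre^{\,l}\,\cdot\,)$. Instead I would argue through the coefficient ideal. Writing $\gre=\sum_{i<0}\gre_i t^i$ (a finite sum), nilpotency of $\gre$ forces each $\gre_i\in R$ to be nilpotent, and as there are only finitely many of them the ideal $I=(\gre_i)\subset R$ satisfies $I^{k}=0$ for some $k$. Multiplication by $\gre$ multiplies coefficients by elements of $I$ while $\pi$ merely discards terms, so $N$ carries the series with coefficients in $I^{\,l}$ into those with coefficients in $I^{\,l+1}$; iterating, $N^{k}=0$. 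Hence $\id+N$ is invertible, with inverse the finite sum $\sum_{l=0}^{k-1}(-N)^{l}$, which completes the proof. (This finite-filtration bookkeeping is precisely the mechanism already used for the nilpotent ideal in the proof of Lemma~\ref{lem:SigmaG}.)
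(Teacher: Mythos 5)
Your proof is correct, and it runs on the same engine as the paper's, but packaged differently. The paper argues existence by the iteration $\grb_0=\grb$, $\grb_{n+1}=\grb_n-\gra\,\pi(\grb_n)$ (whose terms are exactly the partial sums of your Neumann series $\sum_l(-N)^l\pi(\grb)$), and proves uniqueness by a separate coefficient-by-coefficient induction showing each coefficient lies in $I^m$ for all $m$; you unify both halves into the single statement that $\Phi=\id+N$ is bijective on $R[\![t]\!]$, using the same nilpotent coefficient ideal $I$ to kill $N$. What your packaging buys is economy: one operator inversion instead of two inductions. One remark on your ``crux'': the obstacle you flag is not actually there. Since $\gre$ and $\ker\pi=t^{-1}R[t^{-1}]$ both consist of strictly negative powers of $t$, multiplication by $\gre$ maps $\ker\pi$ into $\ker\pi$, so $\pi(\gre\,\pi(\grd))=\pi(\gre\grd)$ for all $\grd$, and by induction $N^{l}=\pi(\gre^{l}\,\cdot\,)$ on $R[\![t]\!]$. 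Hence $N^{k}=0$ follows directly from $\gre^{k}=0$, and your coefficient-ideal bookkeeping, while perfectly valid (it is the paper's mechanism), is not forced on you.

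A final point of hygiene that affects you and the paper equally: you silently replaced the condition $\grb-\gra\grg\in R[t^{-1}]$ by $\grb-\gra\grg\in t^{-1}R[t^{-1}]$. This is the right thing to do --- with $R[t^{-1}]$ as literally written, uniqueness fails (if $\grg$ works, so does $\grg+1$, since $\gra\in R[t^{-1}]$) --- and it is the version the paper itself proves (its uniqueness step assumes $\grb-\gra\grg\in t^{-1}R[t^{-1}]$) and the version used later in the proof of Theorem \ref{teo:rappresentabilita} for $\graff^J_G$. So your reading matches the intended statement; it would just be worth saying explicitly that you are proving the sharpened form.
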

\begin{proof}
	Let $\gre=\gre_1t^{-1}+\dots+\gre_Nt^{-N}$ and let $I$ be the ideal generated by the $\gre_i$. Then $I^p=0$ for some integer $p$.

	We prove first the existence of $\grg$. We define a sequence of series by induction. 
	We set $\grb_0=\grb$. Given $\grb_n$ we write $\grb_n=a_n+b_n$ where $a_n\in R[\![t]\!]$ and $b_n\in t^{-1}R[t^{-1}]$. Then we set $\grb_{n+1}=\grb_n-(1+\gre)a_n=b_n-\gre a_n$. Notice that by construction the coefficients of $a_n$ are in $I^n$, hence $a_n=0$ for $n\geq p$. 
	Finally $\grg:=\gra_0+\dots+\gra_k$ has the required properties. 
	
	Now we prove the uniqueness of $\grg$.  It enough to prove that if $\grb\in t^{-1}R[t^{-1}]$ and 
	$\grb-\gra\grg\in t^{-1}R[t^{-1}]$ then $\grg=0$. Write $\grg=\sum_{i\geq h} c_i t^i$. For all $i\geq 0$
	$$
	c_i+\gre_1 c_{i+1}+\dots+\gre_N c_{i+N}=0.
	$$
	Hence $c_i\in I$ for all $i$, and arguing by induction, $c_i\in I^m$ for all $m$. This implies $c_i=0$ for all $i$. 
\end{proof}

We now argue by induction proving both uniqueness and existence of $h_\ell$. For $\ell=0$ we must have $h_0=1$ since we are already assuming that $f_0\in \Sigma_{G,m}(R)$, hence this follows from the proof of Lemma \ref{lem:SigmaG} in the case of $G=\mG_m$. Suppose we have determined $h_0,\dots,h_\ell$ so that the coefficient of $s^i$ in the product $f\cdot h$ for $i=0,\dots,\ell$ has the required form. Notice that the coefficient of $s^{\ell+1}$ is equal to 
$$
f_0 h_{\ell+1}+\cdots+ f_{\ell}h_1+f_{\ell+1}
$$
We then apply the previous Lemma \ref{lem_1.13} with $\gra=-f_0$ and $\grb=f_1 h_{\ell}+\cdots+ f_{\ell}h_1+f_{\ell+1}$.

\subsubsection{Proof of Theorem \ref{teo:rappresentabilita} in the case of $ ^L\graff_G$}
\label{ssez:LLGsuLJG} This case follows by what we have already recalled in Section \ref{ssez:JandL}: if $X$ is an ind-affine scheme then $LX$ is also an ind-affine scheme. More precisely, we define
$$
{}^L\Sigma_G =L(\Sigma_G)\subset LLG.
$$
It follows immediately, from the properties of $\Sigma_G$, that ${}^L\Sigma_{G}$ has the required properties.

\subsubsection{Proof of Theorem \ref{teo:rappresentabilita} in the case of $\graff^{\mathrm{big}}_G$}
\label{indscmostGr2}
The presheaf $LLG/JJG$ fibers over  $LLG/LJG=L(\graffine_G)$, the loop space of the affine Grassmannian, and the fiber is $LJG/JJG$ the affine Grassmannian of the jet group.
As we have seen in the previous sections, for both spaces we have canonical representatives. In the first case we have the space $ ^L\Sigma_{G}=L(\Sigma_G)\subset LLG$ and in the second case $\Sigma^J_{G}\subset LJG$. 
Hence we set 
$$
\Sigma^{big}_{G}= \Sigma^J_{G}\times L(\Sigma_G)\subset LLG
$$
We have a map from this presheaf to $LLG$, given by the 
the inclusion of $LJG\subset LLG$ and product in $LLG$: $$
(g,h)\longmapsto g \cdot h
$$
It follows easily, from the properties of $L(\Sigma_G)$ and of $\Sigma_{G}^J$ that this map is injective and that 
$\Sigma_{G}^{big}$ has the required properties.

\subsubsection{Proof of Theorem \ref{teo:rappresentabilita} in the case of $\graff^{L}_G$}
We now deal with the case of the affine Grassmannian of the loop group. As already observed, we are reduced to study the case of $G=\mG_m$.  

We start by noticing that in the case of the multiplicative group $\Sigma_G\subset LG$ is a subgroup. Hence $^L\Sigma_G=L(\Sigma_G)\subset LLG$ and $J(\Sigma_G)\subset JLG\subset LLG$ are also subgroups, and moreover $J(\Sigma_G)\subset L(\Sigma _G)$. Firstly, we describe representatives of the quotient $L(\Sigma_G)/ J(\Sigma_G)$.
Define 
$$\Sigma^{L,-}_G(R)=\big\{ 1+\gre_1(t)s^{-1}+\cdots+\gre_n(t)s^{-n} : n\in \mN\mand \gre_i\in t^{-1}R[t^{-1}] \text{ are nilpotent}\big\}.
$$
which is easily seen to be an ind-affine scheme.

\begin{prop}\label{prop:LsigmamoduloJsigma}
Every class in $L(\Sigma_G)(R)/J(\Sigma_G)(R)$ has a unique representative in $\Sigma^{L,-}_G(R)$.
\end{prop}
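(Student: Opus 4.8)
The plan is to read the statement as a Birkhoff-type factorization for the abelian group $\Sigma_G$. First I would record that $\Sigma^{L,-}_G$ is actually a \emph{subgroup} of $L(\Sigma_G)$, lying inside the ``degree $0$'' stratum $L(\Sigma_{G,0})$, where $\Sigma_{G,0}(R)=\{1+\nu:\nu\in t^{-1}R[t^{-1}]\text{ nilpotent}\}$ is the stratum $m=0$ of \eqref{onelabel}. Indeed a product of two elements $1+\sum_j\epsilon_j(t)s^{-j}$ produces, in the coefficient of each $s^{-p}$, only elements of $t^{-1}R[t^{-1}]$ that are nilpotent (the cross terms land in $t^{-2}R[t^{-1}]\subset t^{-1}R[t^{-1}]$), and for any such element $1+\mu$ the series $\mu$ is nilpotent in $R(\!(s)\!)[t^{-1}]$ — the finitely many coefficients of the $\epsilon_j$ generate a nilpotent ideal of $R$ — so its inverse is a finite geometric sum of the same shape. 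Thus the proposition is equivalent to the internal direct-product decomposition $L(\Sigma_G)=\Sigma^{L,-}_G\cdot J(\Sigma_G)$ together with $\Sigma^{L,-}_G\cap J(\Sigma_G)=\{1\}$.

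Uniqueness is the easy half and I would dispatch it first. An element of $\Sigma^{L,-}_G(R)$ involves only \emph{strictly negative} powers of $s$ besides the constant $1$, whereas an element of $J(\Sigma_G)(R)=\Sigma_G(R[\![s]\!])$ of degree $0$ lies in $J(\Sigma_{G,0})(R)$ and hence involves only \emph{non-negative} powers of $s$; since $\Sigma^{L,-}_G$ is contained in the degree-$0$ part, comparing coefficients of $s^{-j}$ for $j\ge 1$ forces any element of $\Sigma^{L,-}_G\cap J(\Sigma_G)$ to be $1$. To reduce existence to degree $0$ I would use the valuation homomorphism $L(\Sigma_G)\to\underline{\mZ}$ sending $t^m(1+\nu)\mapsto m$, with kernel $L(\Sigma_{G,0})$. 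Passing from $R$ to $R(\!(s)\!)$ or to $R[\![s]\!]$ does not change idempotents, so this target is the same constant sheaf in the $L$ and $J$ cases, and $m\mapsto t^m$ shows $J(\Sigma_G)\to\underline{\mZ}$ is already surjective; multiplying a given class by a suitable $t^{-m}\in J(\Sigma_G)(R)$ therefore reduces the whole problem to $L(\Sigma_{G,0})(R)/J(\Sigma_{G,0})(R)$.

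The heart of the matter is existence in degree $0$: given $g=1+\nu$ with $\nu=\sum_{i=1}^N c_i(s)t^{-i}$ and $c_i(s)\in R(\!(s)\!)$ nilpotent, I must produce $h\in\Sigma^{L,-}_G(R)$ so that $h^{-1}g\in J(\Sigma_{G,0})(R)$, i.e. so that $h^{-1}g$ has no strictly negative powers of $s$. I would argue by induction on the nilpotency index $k$ of the (finitely generated, hence nilpotent) ideal $I=(c_1,\dots,c_N)\subset R(\!(s)\!)$, the base case $I=0$ being trivial. The linearized step is the additive Laurent splitting $R(\!(s)\!)=R[\![s]\!]\oplus s^{-1}R[s^{-1}]$: writing $c_i=c_i^{+}+c_i^{-}$ accordingly, the strictly negative part is $\nu_-=\sum_i c_i^{-}(s)t^{-i}=\sum_{j\ge 1}\epsilon_j(t)s^{-j}$ with $\epsilon_j(t)=\sum_i c_{i,-j}t^{-i}\in t^{-1}R[t^{-1}]$ nilpotent (each $c_{i,-j}$ is a coefficient of a nilpotent element of $R(\!(s)\!)$, and there are finitely many $i$ and finitely many negative $s$-powers). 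Setting $h_1=1+\nu_-\in\Sigma^{L,-}_G$ and $j_1=1+\nu_+\in J(\Sigma_{G,0})$, a direct expansion of $j_1^{-1}h_1^{-1}g$ shows it equals $1+\nu^{(2)}$ where every coefficient of $\nu^{(2)}$ lies in $I^2$; since $(I^2)^{\lceil k/2\rceil}=I^{2\lceil k/2\rceil}=0$ with $\lceil k/2\rceil<k$, the inductive hypothesis factors this remainder, and multiplying the pieces back together (using commutativity) yields the required $h$ and $j$.

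The step I expect to be the main obstacle is exactly this existence argument: the factorization is multiplicative while the natural splitting is additive, so one must keep track of the cross terms. The device that controls them is the induction along the nilpotency filtration, each approximation pushing the error from $I^m$ into $I^2$ and terminating because $I$ is nilpotent; the finiteness in $t$ (finitely many $i$) and in negative powers of $s$ (Laurent series) is what guarantees that every intermediate object stays inside the ind-affine models $\Sigma^{L,-}_G$ and $J(\Sigma_{G,0})$.
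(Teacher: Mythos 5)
Your uniqueness argument (that $\Sigma^{L,-}_G$ is a subgroup of the degree-zero stratum meeting $J(\Sigma_G)$ only in $1$) and your reduction to degree $0$ are both correct and agree in substance with the paper (which names the two variables the other way around, but that is immaterial). The genuine gap is in the existence step, exactly where you predicted the obstacle: the assertion that $j_1^{-1}h_1^{-1}g=1+\nu^{(2)}$ with every coefficient of $\nu^{(2)}$ in $I^2$ is false. The coefficients of $\nu^{(2)}$ are unit multiples of sums of products $c_i^-c_{i'}^+$, and the truncations $c\mapsto c^{\pm}$ are $R$-linear but \emph{not} $R(\!(s)\!)$-linear, so $c_i^{\pm}$ need not lie in $I=(c_1,\dots,c_N)\subset R(\!(s)\!)$, and $c_i^-c_{i'}^+$ need not lie in $I^2$. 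Concretely, take $R=k[x_0,x_1,x_2,\dots]/\bigl(\sum_{i+j=m}x_ix_j:\ m\ge 0\bigr)$ and $N=1$, $c=\sum_{j\ge -1}x_{j+1}s^{j}$. Then $c^2=\sum_m q_m s^{m-2}=0$, so $I=(c)$ satisfies $I^2=0$ and $k=2$; but $c^-c^+=x_0s^{-1}\sum_{j\ge0}x_{j+1}s^{j}$ has $s^0$-coefficient $x_0x_2$, which is nonzero in $R$ (the only degree-$2$, weight-$2$ relation is $2x_0x_2+x_1^2$), so $\nu^{(2)}\neq 0$ while $I^2=0$. Worse, every coefficient of $\nu^{(2)}$ is a multiple of $x_0$ and $x_0^2=0$, so the ideal generated by the coefficients of $\nu^{(2)}$ again has nilpotency index exactly $2$: your induction measure does not decrease, and the argument never gets started. (A finite-dimensional example in any characteristic: $R=k[x,y]/(x^3,x^2y,xy^2,y^3)$, $c=xs^{-1}+y$, where a degree count shows $c^-c^+=xys^{-1}\notin (c^2)=I^2$.) In these examples the iteration happens to terminate anyway, but for reasons your argument does not capture.

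This is not a bookkeeping slip; controlling products of truncations of nilpotent Laurent series \emph{is} the content of the proposition, and it is exactly what the paper's Lemma \ref{lem:prodottiPartiPositive} supplies: the paper solves the recursion $h_\ell=\bigl(\gre_1h_{\ell-1}+\dots+\gre_Nh_{\ell-N}\bigr)_+$ directly (this gives uniqueness) and proves termination by that lemma, whose proof enlarges $I$ to include the finitely many individual coefficients $\gre_{ij}$ with $-d\le j\le d$ and then compares nested truncated products with honest products via a counting argument. Your iteration can in fact be repaired in a similar spirit, but with a different invariant: measure progress by the ideal $\widehat{I}\subset R$ generated by the \emph{finitely many} coefficients of the strictly negative parts $c_i^-$ (these are nilpotent elements of $R$, so $\widehat{I}$ is nilpotent — note that the ideal generated by \emph{all} coefficients of a nilpotent Laurent series need not be nilpotent, as the ring above shows, which is why one must restrict to the negative window). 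Every coefficient of $\nu^{(2)}$ lies in $\widehat{I}$, since each term contains a factor from $\nu_-$; from then on each step multiplies a negative part by a positive part whose coefficients both lie in the current power of $\widehat{I}$, so the coefficients of $\nu^{(m+2)}$ lie in $\widehat{I}^{2^m}$ and the process stops. Some device of this kind is indispensable; the halving induction on the index of $I\subset R(\!(s)\!)$, as you stated it, cannot be salvaged.
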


To prove this Proposition we need the following Lemma. Given a series $a(t)=\sum a_it^i \in R(\!(t)\!)$ we denote by $a_+(t)$ the series $\sum_{i\geq 0}a_it^i$.

\begin{lemma}\label{lem:prodottiPartiPositive}
	Let $\gre_1,\dots,\gre_N$ be nilpotent series in $R(\!(t)\!)$. Then there exists $M$ such that for all choices of indexes $i_1,\dots,i_M$ in $\{1,\dots,N\}$ we have
	\begin{equation}\label{eq:epep}
	\bigg(\gre_{i_1}\cdots\Big(\gre_{i_{M-2}}\cdot \big(\gre_{i_{M-1}}\cdot (\gre_{i_M})_+\big)_+\Big)_+ \! \! \cdots \!\!\bigg)_+   =0.
	\end{equation}
\end{lemma}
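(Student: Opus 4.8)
The plan is to read the nested expression as an iterated operator. For each index $j$ let $T_j$ be the $R$-linear endomorphism of $R(\!(t)\!)$ given by $T_j(b) = (\gre_j b)_+$, where $(-)_+$ is the projection onto the non-negative powers of $t$. Since $(\gre_{i_M})_+ = T_{i_M}(1)$, the left-hand side of \eqref{eq:epep} is exactly the composite $T_{i_1} T_{i_2} \cdots T_{i_M}(1)$. So the statement to be proved becomes: a sufficiently long composite of the operators $T_j$, applied to $1$, vanishes, uniformly in the choice of the $i_r$.

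The key point I would exploit is that, although $(-)_+$ is \emph{not} multiplicative, it is $R$-linear on coefficients, so it interacts well with ideals of $R$ generated by coefficients. Let $J \subseteq R$ be the ideal generated by all coefficients of $\gre_1, \dots, \gre_N$, and for $b \in R(\!(t)\!)$ let $I(b) \subseteq R$ denote the ideal generated by the coefficients of $b$. First I would observe that $I(T_j(b)) \subseteq J\cdot I(b)$ for every $b$: each coefficient of $\gre_j b$ is a finite sum $\sum_{p+q=n} e_p b_q$ with $e_p$ a coefficient of $\gre_j$ (hence in $J$) and $b_q$ a coefficient of $b$ (hence in $I(b)$), and passing from $\gre_j b$ to $(\gre_j b)_+$ only discards some of these coefficients. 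Iterating from $I(1) = R$ then gives $I\big(T_{i_1}\cdots T_{i_M}(1)\big) \subseteq J^{M}$ for every choice of indices.

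It remains to check that $J$ is nilpotent, for then choosing $M$ with $J^{M}=0$ forces every coefficient of $T_{i_1}\cdots T_{i_M}(1)$ to vanish, i.e. the composite is $0$; crucially this $M$ depends only on the list $\gre_1,\dots,\gre_N$ and not on the sequence $i_1,\dots,i_M$, which is exactly the uniformity demanded by the statement. For the nilpotency of $J$ I would first record that the coefficients of a nilpotent series are nilpotent elements of $R$: modulo the nilradical $\mathrm{nil}(R)$ the ring $R/\mathrm{nil}(R)$ is reduced, hence so is $(R/\mathrm{nil}(R))(\!(t)\!)$ (over a reduced ring a Laurent series ring is reduced, as one sees by testing against every prime, where it becomes a domain), so a nilpotent series dies modulo $\mathrm{nil}(R)$ and all of its coefficients lie in $\mathrm{nil}(R)$. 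In the situation where this Lemma is applied the $\gre_j$ are Laurent polynomials in $t^{-1}$, so $J$ is generated by \emph{finitely many} nilpotent elements and is therefore nilpotent, which is all that is needed.

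The main obstacle is precisely that the projection $(-)_+$ destroys multiplicativity, so one cannot simply invoke the fact that the ideal $(\gre_1,\dots,\gre_N)$ of $R(\!(t)\!)$ is nilpotent: a product of enough of the $\gre_j$'s does vanish, but the intervening applications of $(-)_+$ break such products apart, so this nilpotency cannot be used directly. The device that circumvents this is to stop reasoning inside $R(\!(t)\!)$ and instead track the coefficient ideals in $R$, for which the $R$-linearity of $(-)_+$ is exactly what makes the estimate $I(T_j(b)) \subseteq J\, I(b)$ go through and ultimately yields $J^{M}=0$.
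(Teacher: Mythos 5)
Your reduction of \eqref{eq:epep} to the composite $T_{i_1}\cdots T_{i_M}(1)$, with $T_j(b)=(\varepsilon_j b)_+$, and the estimate $I(T_j(b))\subseteq J\cdot I(b)$ are both correct; the proof breaks precisely where you need $J$ to be nilpotent. The Lemma is stated for \emph{arbitrary} nilpotent series in $R(\!(t)\!)$: such a series has finitely many negative-degree terms but may have infinitely many nonzero (nilpotent) coefficients, so your $J$ is generated by infinitely many nilpotent elements, and such an ideal is nil but in general \emph{not} nilpotent — indeed the coefficient ideal of a nilpotent, even square-zero, series can fail to be nilpotent (take the universal example $R=\mathbb{Q}[x_0,x_1,\dots]/(\text{coefficients of }(\sum_i x_i t^i)^2)$; a weight-by-weight dimension count shows no power of $(x_0,x_1,\dots)$ lies in the relation ideal). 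Your fallback claim, that where the Lemma is applied the $\varepsilon_j$ are Laurent polynomials in $t^{-1}$, is false: in the proof of Proposition \ref{prop:LsigmamoduloJsigma} the Lemma is applied to the coefficients $\varepsilon_i(t)$ of an element $f=s^m-\varepsilon_1(t)s^{m-1}-\cdots-\varepsilon_N(t)s^{m-N}$ of $\Sigma_{G,m}\big(R(\!(t)\!)\big)$, and there the polynomiality required by the definition of $\Sigma_{G,m}$ is in the variable $s$ (one asks $\varepsilon(s)\in R(\!(t)\!)[s^{-1}]$ nilpotent), while each $\varepsilon_i(t)$ is an arbitrary nilpotent element of $R(\!(t)\!)$ — for instance $a\sum_{j\geq 0}t^j$ with $a^2=0$. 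So your argument proves only a special case, and not one that suffices for the paper's use of the Lemma.

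The paper's proof is built exactly to get around this obstruction, and it does so with information your coefficient-ideal contraction throws away, namely the degrees. It works inside $R(\!(t)\!)$ with the ideal $I$ generated by the series $\varepsilon_1,\dots,\varepsilon_N$ \emph{themselves} together with only the finitely many coefficients $\varepsilon_{ij}$ whose degree $j$ lies in a bounded window $|j|\leq d$, where $-d$ bounds all supports below; this $I$ has finitely many nilpotent generators, hence $I^Q=0$ for some $Q$, and one takes $M=3Q$. The infinitely many high-degree coefficients are never needed individually: for $\ell\geq 0$ the coefficient of $t^\ell$ of the nested expression is compared with that of the plain product $\varepsilon_{i_1}\cdots\varepsilon_{i_M}$, which vanishes since $M\geq Q$, and each correction term coming from the discarded negative parts contains a tail product $\varepsilon_{i_k,j_k}\cdots\varepsilon_{i_M,j_M}$ with $k\leq Q$ and $j_k+\cdots+j_M<0$; then either at least $Q$ of the degrees $j_r$ fall in the low window (so the product lies in $I^Q=0$), or so many of them exceed $d$ that the sum $j_k+\cdots+j_M$ cannot be negative. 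That degree bookkeeping is the missing idea in your proposal, and without it (or something equivalent) the argument cannot cover the Lemma as stated.
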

\begin{proof} Let $\gre_{ij} \in R$ be the coefficient of $t^j$ in the series $\gre_i$. Note that the $\gre_{ij}$'s are all nilpotent. Fix an integer $d$ such that $\gre_{ij} = 0$ whenever $j<d$.
Let $I$ be the ideal in $R(\!(t)\!)$ generated by $\gre_1,\dots,\gre_N$ and by $\gre_{ij}$ for $i=1,\dots,N$ and $-d\leq j\leq d$. This is an ideal generated by a finite number of nilpotent elements, hence it is nilpotent. Let $Q$ be such that $I^Q=0$. We claim that we can choose $M=3Q$. 

For $\ell\geq 0$
the coefficient of $t^\ell$ in the left hand term of formula \eqref{eq:epep} is equal to 
$$
a_\ell:=\sum_
{\substack{j_M\geq 0\\ j_{M-1}+j_M\geq 0\\ \cdots \\j_{2}+\cdots +j_M\geq 0 \\j_{1}+\cdots +j_M=\ell }}
\gre_{i_1,j_1}\cdots \gre_{i_M,j_M}
$$
In order to prove that $a_\ell = 0$, we compare it with the coefficient of $t^\ell$ in $\gre_{i_1}\cdots \gre_{i_M}$ (which vanishes as $M \geq Q$). 
If we denote by $c_h(p)$ is the coefficient of $t^p$ in $\gre_{i_1}\cdots \gre_{i_h}$, then 
\begin{align*}
a_\ell =\:  c_M(\ell)&-\sum_{j_M<0}\,c_{M-1}(\ell-j_M)\,\gre_{i_M,j_M}  \\
&-\sum_{\substack{j_M\geq 0\\j_{M-1}+j_M<0}}
\,c_{M-2}(\ell-j_M-j_{M-1}) \,\gre_{i_{M-1},j_{M-1}}\,\gre_{i_M,j_M}  \\
&-\sum_{\substack{j_M\geq 0 \\j_{M-1}+j_M\geq 0\\j_{M-2}+j_{M-1}+j_M<0}}
\,c_{M-3}(\ell-j_M-j_{M-1}-j_{M-2})\,\gre_{i_{M-2},j_{M-2}}\,\gre_{i_{M-1},j_{M-1}}\,\gre_{i_M,j_M}    \\
&\cdots \\
&
-\sum_{\substack{j_M\geq 0\\\cdots \\ j_2+\cdots+j_M<0}}
\,c_{1}(\ell-j_M-\cdots -j_{2})\,\gre_{i_{2},j_{2}}\cdots \gre_{i_M,j_M}
\end{align*}
By definition $c_h(p)=0$ for $h\geq Q$ and for all $p$, hence the first $M-Q$ rows of the sum above vanish. For the remaining $Q-1$ rows, notice that each summand contains a product of the form
$$
\gre_{i_{k},j_{k}}\cdots \gre_{i_M,j_M}
$$
with $k\leq Q$ and such that $j_k+\dots+j_M<0$. If at least $Q$ among the indexes $j_k,\dots,j_M$ are less or equal than $d$, then by the definition of the ideal $I$, the product vanishes. If this is not the case then at least $M-k-Q+2$ of the indexes $j_k,\dots,j_M$ are bigger or equal to $d+1$. Then 
$$j_{k}+\cdots+j_M\geq (d+1)(M-k-Q+2)-d(Q-1)\geq (d+1)(Q+2)-d(Q-1)\geq 0. $$
Hence the condition $j_k+\dots+j_M<0$ cannot be satisfied in this case. It follows that the summands always involve products of at least $Q$ elements of $I$, and therefore vanish. 
\end{proof}

We can now prove Proposition \ref{prop:LsigmamoduloJsigma}.

\begin{proof}[Proof of Proposition \ref{prop:LsigmamoduloJsigma}] Let 
$$ 
f=s^m-\gre_1(t)s^{m-1}\cdots-\gre_N(t)s^{m-N}\in L(\Sigma_{G,m})(R)=\Sigma_{G,m}\big(R(\!(t)\!)\big),
$$
in particular $\gre_1,\dots,\gre_N$ are nilpotent series. We need to prove that there exists a unique $h\in \Sigma_G(R[\![t]\!])$ such that $f\cdot h\in \Sigma_{G}^{L,-}(R)$.  Since $s^m$ is invertible in $R[\![t]\!](\!(s)\!)$, we can assume $m=0$. Let $h=1+\sum_{i<0} h_{-i}(t)s^i \in J(\Sigma_G)$. The coefficient of $s^\ell$ in $f\cdot h$ is $ h_{-\ell}- \gre_1h_{-\ell-1}\cdots-\gre_{N}h_{-\ell-N}$ where we set for convenience $h_i=0$ for $i< 0$ and $h_0=1$. If $fh \in \Sigma_G^{L,-}(R)$, then the coefficient of $s^\ell$ lives in $t^{-1}R[t^{-1}]$ for every $\ell \leq 1$, and therefore
$$
h_\ell=\big(\gre_1h_{\ell-1}\cdots+\gre_{N}h_{\ell-N} \big)_+
$$
for all $\ell\geq 1$. By induction, this uniquely determines the coefficients $h_i$, and hence proves the unicity of $h$. To prove the existence of such an $h$, we must show that $h_{i}=0$ for $i$ big enough. Notice that, by induction, we get that $h_\ell$ is a sum of expressions of the form 
$$
\bigg(\gre_{i_1}\cdots\Big(\gre_{i_{M-2}}\cdot \big(\gre_{i_{M-1}}\cdot (\gre_{i_M})_+\big)_+\Big)_+ \! \! \cdots \!\!\bigg)_+ 
$$
where $i_1+i_2+\dots+i_M=\ell$. Hence by Lemma \ref{lem:prodottiPartiPositive} for $\ell$ big enough we have $h_\ell=0$. 
\end{proof}

We now describe a representative for the quotient $LLG/JLG$. Define the ind-affine scheme
$$
\Sigma_G^L=\Sigma_G^J \times \Sigma_G^{L,-}.
$$
First notice that, by definition, both $\Sigma_G^{L,-}(R)\subset LLG(R)$ and $\Sigma_G^J(R)\subset LJG(R)\subset LLG(R)$ are subgroups, and that their intersection is the neutral element $1$. Therefore the multiplication from 
$\Sigma_G^{L,-}(R)\times \Sigma_G^J(R)$ to $LLG(R)$ is injective. In particular, we view $\Sigma^L_G(R)$ as a subgroup of $LLG(R)$.

Note that $\Sigma_G^{big}= \Sigma^J_G\times L(\Sigma_G)$ is a subgroup of $LLG$ and in particular, by what proved in the case of the quotient $LLG/JJG$, we have $LLG=\Sigma^J_G\times L(\Sigma_G)\times JJG$. Moreover, since $LG=\Sigma_G\times JG$, we have 
$JLG=J(\Sigma_G)\times JJG\subset L(\Sigma_G)\times JJG$. Hence 
$$
\frac{LLG}{JLG}=\frac{\Sigma^J_G\times L(\Sigma_G)\times JJG}{J(\Sigma_G)\times JJG}=
\Sigma^J_G\times\frac{L(\Sigma_G)}{J(\Sigma_G)}=\Sigma^J_G\times\Sigma^{L,-}_G.
$$
proving the claim. 

\subsubsection{Proof of Theorem \ref{teo:rappresentabilita} in the case of $\graff^{(2)}_G$}
\label{indscmostGr3}
Since $G^{(2)}\subset LJG$, 
this Grassmannian fibers over $LLG/LJG$, the loop space of the affine Grassmannian of $G$, and the fiber is isomorphic to 
 the presheaf given by 
$$
\Big(LJG/G^{(2)}\Big)(R)=G\big(R(\!(t)\!)[\![s]\!]\big)/G\big(\calO''(R) \big)=R(\!(t)\!)[\![s]\!]^*/\calO''(R)^*
$$
Given an element in $g(t,s)\in \big( R(\!(t)\!)[\![s]\!]\big)^*$, we have that $g(t,0) \in R(\!(t)\!)^*$. In particular, given $\bar g(t,s)\in \big( R(\!(t)\!)[\![t]\!]\big)^*/\big( \calO''(R)\big)^*$, the element $\bar g(t,0)$ is a well defined element of $\graffine_G (R)$. Viceversa $\graffine_G\subset LJG/G^{(2)}$. This easily implies that every element of the fiber has a unique representative in $$\Sigma_G\subset LG\subset LLG$$ where the last inclusion is given by $g(t)\mapsto h(t,s)$ where $h(t,s):=g(t)$. From this, and the properties of ${}^L\Sigma_{G}=L(\Sigma_G)$, we easily deduce that 
$$
\Sigma^{(2)}_{G}=\Sigma_G\times L(\Sigma_{G})
$$
has the required properties.

\begin{rem}\label{rem_raskin} It is easy to deduce from the proof of Theorem \ref{teo:rappresentabilita} that $$^L\graff_{\mathbb{G}_m} \simeq \coprod_{n\in \mathbb{Z}} \xi_n \qquad \graff^{(2)}_{\mathbb{G}_m} \simeq \coprod_{(n, m) \in \mathbb{Z}\times \mathbb{Z}} \xi_{n,m}$$ where the $\xi_n$'s and the $\xi_{n,m}$'s are fat $k$-points (whose non-reduced ind-scheme structures depend on $n$, and on $(n, m)$, respectively). For all the other two-dimensional Grassmannians there is always \emph{both} a discrete (like above for $\mathbb{Z}$ and $\mathbb{Z}\times \mathbb{Z}$) and a \emph{continuous} part. 
This fact, at least, does not prevent a possible Tannakian interpretation of $LJ\mG_m$-equivariant sheaves on $^L\graff_{\mathbb{G}_m}$ and of $\mG_m^{(2)}$-equivariant sheaves on $\graff^{(2)}_{\mathbb{G}_m}$. We thank S. Raskin for pointing this out.
\end{rem}

\section{Geometric Grassmannians }\label{section:geomGrass}

Throughout this Section, we fix $X$ a smooth quasi-projective \emph{surface} over an algebraically closed field $k$. We denote by $\Aff_{/X}$ the category of affine schemes over $k$ endowed with a map to $X$, and by $\St_k$ (respectively $\mathsf{PreSt}_k$) the category of fppf stacks in groupoids over $k$ (respectively, the category of pseudo-functors $\mathsf{Aff}^{\mathrm{op}}_k \to \mathsf{Grpds}$, where $\mathsf{Grpds}$ denotes the $2$-category of groupoids).

\subsection{Fiber functors}\label{sec:fiberfunctors} We will use the formalism of fiber functors, as developed in \cite[Section 2.3]{Hennion_Melani_Vezzosi_FlagGrass}. We denote by $\cF_X$ the forgetful functor $\Aff_{/X} \to \St_{k}$, which sends $(S\to X) \mapsto S$. A \emph{fiber functor over $X$} is a functor
\[ \cF : \Aff_{/X} \to \St_k \]
together with a natural transformation $\cF \to \cF_X$. Actually, all our fiber functors will land inside $\mathsf{Sch}_k$ (and, obviously, $\cF_X$ is indeed a functor $\Aff_{/X} \to \mathsf{Sch}_{k}$).  Given any stack or scheme $T \to X$, we denote by $\cF_T$ the induced fiber functor over $X$ defined via
 \[
 \begin{aligned}
\cF_Y \colon &
\ \   \Aff_{/X} & \longrightarrow &\  \ \ \  \St_k \\
& (S \to X) & \longmapsto &\ \  S \newtimes_{X} T
\end{aligned}
\]

We will often write $S/X$ to denote a map $S \to X$.

\begin{defin}\label{def:fiberfunctors}
Let $T \to X$ be a closed subscheme. The induced map $\cF_T \to \cF_X$ is an objectwise closed immersion of affine schemes. 
\begin{itemize}
	\item The \emph{open complement} $\cF_{X\smallsetminus T}$ is the fiber functor defined as
	\[ \cF_{X \smallsetminus T}(S/X) = \cF_X(S/X) \smallsetminus \cF_T(S/X) = S \smallsetminus (S\times_X T) . \]
	\item The \emph{formal neighborhood of $T$ inside $X$} is the fiber functor $\cF_{\hT}$ sending $(S\to X)$ to the ind-scheme formal completion of $\cF_T(S) = S \newtimes_X T$ inside $S$.
	\item The \emph{affine formal neighborhood of $T$ inside $X$} is the fiber functor $\cF_{\hT}^\affinize$ which sends $(S\to X)$ to the affinized formal completion of $\cF_T(S)$ inside $S$. More concretely,
	\[ \cF_{\hT}^\affinize (S/X) = \Spec\left( \Gamma( \cF_{\hT}(S/X), \cO_{\cF_{\hT}(S/X)} ) \right).
	\]
	\item The \emph{punctured formal neighborhood} $\cF_{\widehat{T}\smallsetminus T}$ is the fiber functor defined as 
	\[ \cF_{\widehat{T}\smallsetminus T}(S/X) = \cF_{\hT}^\affinize (S/X) \smallsetminus \cF_T(S/X). \]
\end{itemize}
\end{defin}

\begin{rem}\label{rem:representablefiberfunctors}
	One can construct a category of fiber functors over $X$, and prove that $\St_X$ embeds fully faithfully in the category of fiber functors (see \cite[Lemma 2.3.4]{Hennion_Melani_Vezzosi_FlagGrass}). We say that a fiber functor is \emph{representable} if it belongs to the essential image of $\St_X$. Then, in the situation of Definition \ref{def:fiberfunctors} the fiber functors $\cF_{X \smallsetminus T}$ and $\cF_{\hT}$ are representable, while in general $\cF_{\hT}^\affinize$ and $\cF_{\widehat{T}\smallsetminus T}$ are not.
\end{rem}

Now let $D$ be an effective Cartier divisor in $X$ and $Z \subset D$ a closed subscheme of codimension $1$ inside $D$. Let us denote by $\widehat{D}$ a representative (in the sense of Remark \ref{rem:representablefiberfunctors}) for $\cF_{\widehat{D}}$. We would like to have a notion of formal completion $\hZ^{\widehat{D}}$ of $Z$ inside $\widehat{D}$. But \cite[Proposition 6.3.1]{Gaitsgory-Rozenblyum:dgindschemes} shows that $\widehat{D}$ is in general an ind-scheme, so we provide details for the construction of  $\hZ^{\widehat{D}}$.

We start by writing $\widehat{D}$ as a formal colimit of schemes  $ \widehat{D}= \colim D_n$. If $\cI_D$ is the ideal sheaf defining $D$, then $D_n = \Spec_X(\cO_X/ \mathcal{I}^{n+1}_D)$. Using the formalism of fiber functors, each $D_n$ can be also constructed as follows. Since $D$ is a closed subscheme in $X$, for every affine scheme $S=\Spec R$ there exists an ideal $I_{D,R}$ such that $\cF_D(S/X) = \Spec(R/I_{D,R})$. For every natural number $n$, we then set
\[ \cF_{D_n}(S/X) = \Spec(R/I_{D,R}^{n+1}). \]
Then $Z$ is a closed subscheme of each of the $D_n$, so that we can construct the formal completions $\hZ^{D_n}$. For every affine $S=\Spec R$, let $I_{Z,R}$ be the ideal defining the closed subscheme $\cF_Z(S/X)$ in $S$. Notice that in particular $I_{D,R} \subset I_{Z,R}$. Then the fiber functor $\cF_{\hZ^{D_n}}$ is defined as
\[ \cF_{\hZ^{D_n}} (S/X) = \colim_{m \in \mN} \Spec \left( R / (I_{D,R}^n + I_{Z,R}^m)\right). \]
We set
\[ \cF_{\hZ^{\widehat{D}}} (S/X) = \colim_{n \in \mN} \colim_{m \in \mN} \Spec ( R / (I_{D,R}^n + I_{Z,R}^m). \]
The fiber functor $\cF_{\hZ^{D}}$ is therefore an objectwise ind-ind-scheme, so that a priori we can define various affinization of it. 
For example, we can first consider an affinized version of $\cF_{\hZ^{D_n}}$, namely 
\[ \cF_{\hZ^{D_n}}^\affinize (S/X) = \Spec \left( \lim_{m \in \mN} R / (I_{D,R}^n + I_{Z,R}^m) \right) \]
and use it to define
\[ \cF_{\hZ^{\widehat{D}}}^{\mathrm{aff}, \widehat{D}} (S/X) = \colim_{n \in \mN} \cF_{\hZ^{D_n}}^\affinize (S/X) = \colim_{n \in \mN} \Spec \left( \lim_{m \in \mN} R / (I_{D,R}^n + I_{Z,R}^m) \right). \]

Here $\cF_{\hZ^{\widehat{D}}}^{\mathrm{aff}, \widehat{D}}$ has to be thought as the fiber functor encoding the ind-affine formal neighborhood of $Z$ inside the ind-scheme $\widehat{D}$. In particular, $\cF_{\hZ^{\widehat{D}}}^{\mathrm{aff}, \widehat{D}}$ takes values in ind-affine schemes, and not in affine schemes. 

Alternatively, we can consider the affinization of $\cF_{\hZ^{\widehat{D}}}$ defined as follows:
\[ \cF_{\hZ^{\widehat{D}}}^\affinize (S/X) = \Spec \left( \lim_{n \in \mN}\lim_{m \in \mN} R / (I_{D,R}^n + I_{Z,R}^m) \right) \]
\begin{lem}\label{lem:formalcompletionsofZ} There is a canonical isomorphism of fiber functors $$\cF_{\widehat{Z}^{\widehat{D}}}^\affinize \simeq \cF_{\widehat{Z}}^\affinize $$where $\cF_{\widehat{Z}}^\affinize$ is the fiber functor of the affine formal completion of $Z$ in $X$, as defined in Definition \ref{def:fiberfunctors}.
\end{lem}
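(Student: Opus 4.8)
The plan is to reduce the statement to an elementary fact about cofinal filtrations, applied objectwise and then checked to be natural. Fix $S = \Spec R$ in $\Aff_{/X}$ and abbreviate $I_D := I_{D,R}$, $I_Z := I_{Z,R}$, so that $I_D \subseteq I_Z$ because $Z \subseteq D$. Unwinding the definitions, the affine formal completion of $Z$ in $X$ from Definition \ref{def:fiberfunctors} has global sections the $I_Z$-adic completion, so that $\cF_{\widehat{Z}}^{\affinize}(S/X) = \Spec\big( \lim_m R/I_Z^m \big)$, while by construction $\cF_{\widehat{Z}^{\widehat{D}}}^{\affinize}(S/X) = \Spec\big( \lim_n \lim_m R/(I_D^n + I_Z^m) \big)$. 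Since inverse limits commute with inverse limits, the iterated limit on the right is canonically the limit $\lim_{(n,m)} R/(I_D^n + I_Z^m)$ over the directed product poset $\mN \times \mN$; that is, it is the completion of $R$ with respect to the filtration $\{I_D^n + I_Z^m\}_{(n,m)}$. It therefore suffices to show that this filtration and the $I_Z$-adic filtration $\{I_Z^m\}_m$ are cofinal in one another.

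The key step is the two-sided bound
\[ I_Z^{\max(n,m)} \subseteq I_D^n + I_Z^m \subseteq I_Z^{\min(n,m)}, \]
which is precisely where the inclusion $I_D \subseteq I_Z$ enters. From $I_D \subseteq I_Z$ we get $I_D^n \subseteq I_Z^n$, so that $I_D^n \subseteq I_Z^{\min(n,m)}$ and $I_Z^m \subseteq I_Z^{\min(n,m)}$, giving the right-hand inclusion; the left-hand inclusion is immediate from $I_Z^{\max(n,m)} \subseteq I_Z^m \subseteq I_D^n + I_Z^m$. From this sandwich cofinality follows at once: given $m_0$ the ideal $I_D^{m_0} + I_Z^{m_0}$ is contained in $I_Z^{m_0}$, and conversely given $(n_0,m_0)$ the ideal $I_Z^{\max(n_0,m_0)}$ is contained in $I_D^{n_0} + I_Z^{m_0}$. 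Hence the two filtrations define the same topology on $R$, and their completions are canonically identified:
\[ \lim_{(n,m)} R/(I_D^n + I_Z^m) \;\simeq\; \lim_m R/I_Z^m. \]

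Finally I would verify that this identification is natural in $(S\to X)$. The ideals $I_{D,R}$ and $I_{Z,R}$ are pulled back from the ideal sheaves of $D$ and $Z$ on $X$, so for a morphism $S'\to S$ over $X$ the comparison maps above are compatible with the base-change maps, and one obtains an isomorphism of fiber functors rather than merely of objectwise values. The argument is essentially formal once the sandwich is in place, and I expect no deep obstacle: the only points requiring genuine care are the bookkeeping of the iterated versus merged inverse limits and the functoriality check. It is worth noting that the codimension and flag hypotheses on $(D,Z)$ are used only through the single inclusion $I_D \subseteq I_Z$, so the lemma in fact holds for any nested pair of closed subschemes $Z \subseteq D$.
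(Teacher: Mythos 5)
Your proof is correct, and it takes a genuinely different route from the paper's. The paper argues ring-theoretically: since $X$ is of finite type over $k$, the ideals $I_{D,R}$ and $I_{Z,R}$ are finitely generated; by \cite[tag05GG]{stacks-project} the completion $\widehat{R}_{I_{Z,R}}$ is $I_{Z,R}$-adically complete, and then \cite[tag090T]{stacks-project} together with $I_{D,R}\subseteq I_{Z,R}$ shows it is already $I_{D,R}$-adically complete, so the further completion at $I_{D,R}$ (which is how the paper reads the iterated limit defining $\cF_{\widehat{Z}^{\widehat{D}}}^{\affinize}$) changes nothing. You instead work entirely at the level of index posets and filtrations: Fubini turns the iterated limit into a limit over $\mN\times\mN$, and the interleaving $I_{Z,R}^{\max(n,m)} \subseteq I_{D,R}^n + I_{Z,R}^m \subseteq I_{Z,R}^{\min(n,m)}$ --- note that on the cofinal diagonal one even has the equality $I_{D,R}^m + I_{Z,R}^m = I_{Z,R}^m$ --- identifies this limit canonically with $\lim_m R/I_{Z,R}^m$. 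Your approach buys generality and economy: it uses only the inclusion $I_{D,R}\subseteq I_{Z,R}$, so no finite generation (hence no finite-type hypothesis on $X$) is needed, and, as you observe, the lemma then holds for any nested pair of closed subschemes; it also sidesteps a step left implicit in the paper's formulation, namely the identification of $\lim_n\lim_m R/(I_{D,R}^n+I_{Z,R}^m)$ with the double adic completion $\lim_n \widehat{R}_{I_{Z,R}}/I_{D,R}^n\widehat{R}_{I_{Z,R}}$, i.e.\ commuting completion past the quotient by $I_{D,R}^n$. What the paper's route buys is the structural byproduct that $\widehat{R}_{I_{Z,R}}$ is adically complete with respect to both ideals, which is of independent use. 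Your concluding naturality check is exactly what is needed (and is unproblematic, since $I_{D,R'}=I_{D,R}R'$ and $I_{Z,R'}=I_{Z,R}R'$ for any $\Spec R' \to \Spec R$ over $X$, so the interleaving inclusions are preserved under base change); the only presentational suggestion is to justify the phrase ``mutually cofinal filtrations have the same completion'' by the explicit diagonal argument, which makes the canonical isomorphism visible.
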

\begin{proof}
Since $X$ is of finite type over $k$, the ideal sheaves $\mathcal{I}_Z$ of $Z \subset X$ and $\mathcal{I}_D$ of $D\subset X$ are both quasi-coherent and of finite type over $\mathcal{O}_X$. If $\mathrm{Spec} \, R \to X$ is any map, it follows easily that both $I_{Z,R}$ and $I_{D,R}$ are finitely generated ideals in $R$. Therefore, the completion $\widehat{R}_{I_{Z,R}}$ of $R$ at $I_{Z,R}$ is $I_{Z,R}$-adically complete (\cite[tag05GG]{stacks-project}). From this and from the inclusion $I_{D,R} \subset I_{Z,R}$, we deduce by \cite[tag090T]{stacks-project} that the further completion $(\widehat{R}_{I_{Z,R}})^{\wedge}_{I_{D,R}}$ of $\widehat{R}_{I_{Z,R}}$ at $I_{D,R}$ is $I_{D,R}$-adically complete. Therefore $$\widehat{R}_{I_{Z,R} + I_{D,R}} =\widehat{R}_{I_{Z,R}} \simeq (\widehat{R}_{I_{Z,R}})^{\wedge}_{I_{D,R}}$$ as needed.
\end{proof}

There is a canonical map
\[ \cF_{\hZ^{\widehat{D}}}^{\mathrm{aff}, \widehat{D}}   \longrightarrow 
\cF_{\hZ}^\affinize \simeq \cF_{\hZ^{\widehat{D}}}^\affinize\]
between the two possible affine formal neighborhoods of $Z$ inside $\widehat{D}$. We informally regard $\cF_{\hZ^{\widehat{D}}}^{\mathrm{aff}, \widehat{D}} $ as being a smaller formal neighborhood, even though the above map is not an immersion. Both versions will be important in our constructions.

Next, we pass to open complements. We wish to remove from $\hZ$ the divisor $D$ (or, rather, the intersection $D \times_X \hZ$). There is no hope of finding a fiber functor encoding the geometry of $\hZ \smallsetminus D$ if we only work with $\cF_{\hZ^{\widehat{D}}}^{\mathrm{aff}, \widehat{D}}$. In fact, $\widehat{D} \smallsetminus D = \emptyset$. Therefore, for $S=\Spec R$ we define
\[ \cF_{\hZ \smallsetminus D}(S/X) = \cF_\hZ^\affinize (S/X) \newtimes_{S} (S \smallsetminus \Spec(R/I_{D,R})). \]
In a similar fashion, we would like to remove $Z$ from $\hZ^{\widehat{D}}$. Here we can use the finer $\cF_{\hZ^{\widehat{D}}}^{\mathrm{aff}, \widehat{D}}$, remove $Z$ from every $\cF_{\hZ^{D_n}}^\affinize$, and set
\[ \cF_{\hZ^{\widehat{D}} \smallsetminus Z}(S/X) = \colim_{n \in \mN} \left( \cF_{\hZ^{D_n}}^\affinize (S/X) \newtimes_{S} (S \smallsetminus \Spec(R/I_{Z,R})) \right). \]
We can also affinize the fiber functor $\cF_{\hZ^{\widehat{D}} \smallsetminus Z}$. In order to ease notations, let $$Y_n := \cF_{\hZ^{D_n}}^\affinize (S/X) \newtimes_{S} (S \smallsetminus \Spec(R/I_{Z,R})) ,$$
and define
\[  \cF^\affinize_{\hZ^{\widehat{D}} \smallsetminus Z}(S/X) = \Spec(\lim_{n \in \mN} \Gamma(Y_n, \cO_{Y_n})). \]
Finally, put
\[ \cF_{(\hZ^{\widehat{D}} \smallsetminus Z) \smallsetminus D} (S/X) := \cF^\affinize_{\hZ^{\widehat{D}} \smallsetminus Z}(S/X) \newtimes_{S} (S \smallsetminus \Spec(R/I_{D,Z})). \]

We will sometimes simplify notations, and write for example $\hZ \smallsetminus D$ or $(\hZ^{\widehat{D}} \smallsetminus Z) \smallsetminus D$ to denote the corresponding fiber functors $\cF_{\hZ \smallsetminus D}$ and $\cF_{ (\hZ^{\widehat{D}} \smallsetminus Z) \smallsetminus D }$, even though these fiber functors are not representable in general.

\begin{eg}
 Take $X= \mA^2_{k}$, $D=\{ y=0 \}$, and $Z= \{ x=y=0 \}$. Then
 \[
 \cF_{\hD}(X/X) = \colim_{n \in \mN} \Spec\left( k[x,y]/y^n \right), \quad \cF_{\widehat{D}}^\affinize (X/X) = \Spec( k[x][\![y]\!]),\]
 while we have 
 \[ \cF_{\hZ}^\affinize(X/X) = \Spec(k[\![x,y]\!]), \quad \cF_{\hZ^{\widehat{D}}}^{\mathrm{aff}, \widehat{D}}(X/X) = \colim_{n \in \mN} \Spec(k[\![x]\!][y]/y^n).\]
 For the fiber functors associated to open complements, we get 
 \[ \cF_{\hZ \smallsetminus D}(X/X) = \Spec(k[\![x]\!](\!(y)\!)), \quad 
 \cF_{\hZ^{\widehat{D}} \smallsetminus Z}(X/X) = \colim_{n \in \mN} \Spec(k(\!(x)\!)[y]/y^n). \]
 In particular,
 \[ \cF^\affinize_{\hZ^{\widehat{D}} \smallsetminus Z}(X/X) = \Spec(k(\!(x)\!)[\![y]\!]), \quad \cF_{(\hZ^{\widehat{D}} \smallsetminus Z) \smallsetminus D} (X/X) = \Spec(k(\!(x)\!)(\!(y)\!)). \]
\end{eg}

\subsection{Change of sites}\label{sites}
Our references for this $\S \,$ are \cite[Sites and Sheaves]{stacks-project} and \cite[Section 2.4]{Porta_Yu_Higher_analytic_stacks_2014}. If $(\mathcal{C},\tau)$ is a site, $\mathsf{PSt}(\mathcal{C})$ (respectively, $\mathsf{St}(\mathcal{C},\tau)$) will denote the $\infty$-category of prestacks (resp. stacks) in groupoids over $\mathcal{C}$ (resp. over $(\mathcal{C}, \tau)$), and by $(-)^{\sharp}: \mathsf{PSt}(\mathcal{C}) \to  \mathsf{St}(\mathcal{C},\tau)$ the stackification functor (when it exists), that is left adjoint to the inclusion $\mathsf{St}(\mathcal{C},\tau) \hookrightarrow \mathsf{PSt}(\mathcal{C})$. For a functor $f:\mathcal{C}\to \mathcal{D}$, we denote by $f^p: \mathsf{PSt}(\mathcal{D})\to \mathsf{PSt}(\mathcal{C})$ the functor $f^p(F):= F\circ f$, and by ${}_{p}f$ its right adjoint, given by right Kan extension along $f$. If $(\mathcal{C},\tau)$ and $(\mathcal{D}, \sigma)$ are sites, we will write $f^s$ and ${}_{s}f$ for the compositions
$$f^s: \xymatrix{\mathsf{St}(\mathcal{D},\sigma) \ar[r] & \mathsf{PSt}(\mathcal{D}) \ar[r]^-{f^p} & \mathsf{PSt}(\mathcal{C}) \ar[r]^{(-)^\sharp} & \mathsf{St}(\mathcal{C},\tau)}$$
$${}_{s}f: \xymatrix{\mathsf{St}(\mathcal{C},\tau) \ar[r] & \mathsf{PSt}(\mathcal{C}) \ar[r]^-{{}_{p}f} & \mathsf{PSt}(\mathcal{D}) \ar[r]^{(-)^\sharp} & \mathsf{St}(\mathcal{D},\sigma)}$$ where $(-)^\sharp$ denotes the stackification functor. If $f$ is a continuous functor between sites, then $f^s = f^p$ (i.e. $f^s(F)= f^p(F)$ for any $F\in \mathsf{St}(\mathcal{D},\sigma)$). If $f$ is a co-continuous functor between sites, then ${}_{s}f  = {}_{p}f$ (i.e. ${}_{s}f(F)= {}_{p}f(F)$ for any $F\in \mathsf{St}(\mathcal{C},\sigma)$), and $(f^s, {}_{s}f)$ is an adjoint pair.\\

For an arbitrary continuous and co-continuous functor between sites, it is not true that
$f:(\mathcal{C},\tau) \to (\mathcal{D}, \sigma)$, it is \emph{not} true that ${}_s f (F^{\sharp}) \simeq ({}_p f(F))^\sharp$, for an arbitrary prestack $F$ on $C$. Note that this is true when ${}_s f$ and ${}_p f$ are replaced by $f^s$ and $f^p$ (\cite[tag00XM]{stacks-project}), or by $f_s$ and $f_p$ (\cite[tag00WY]{stacks-project}). 

%
%

We will denote by
\begin{itemize}
\item $(\mathsf{Aff}/X, \mathrm{fppf})$  the big affine fppf site of affine $k$-schemes mapping to $X$;
\item $(\mathsf{Sch}/X, \mathrm{fppf})$  the big fppf site of all $k$-schemes mapping to $X$.
\item $(\mathsf{Sch}_k, \mathrm{fppf})$  the big fppf site of $k$-schemes, which is equivalent\footnote{I.e. the category of stacks are equivalent via $w^s$ and ${}_{s}w$, where $w: (\mathsf{Aff}_k, \mathrm{fppf}) \to (\mathsf{Sch}_k, \mathrm{fppf})$ is the inclusion functor. } to the big fppf site $(\mathsf{Aff}_k, \mathrm{fppf})$ of affine schemes over $k$ 
\end{itemize}

It is well-known that the inclusion functor $\mathsf{Aff}/X \to \mathsf{Sch}/X$ provides a continuous and co-continuous functor between sites
$$j_X: (\mathsf{Aff}/X, \mathrm{fppf}) \longrightarrow (\mathsf{Sch}/X, \mathrm{fppf})$$
inducing an \emph{equivalence} between the corresponding $\infty$-categories of stacks in groupoids 
\begin{equation}\label{eq:EquivalenceOfStacks}
{}_{s}(j_X)={}_{p}(j_X):\mathsf{St}(\mathsf{Aff}/X, \mathrm{fppf}) \longrightarrow \mathsf{St}(\mathsf{Sch}/X, \mathrm{fppf}) \,  , \, \mathbf{F} \longmapsto (Y/X \mapsto \lim_{\Spec R \to Y} \, \mathbf{F(R)} )
\end{equation}
with quasi-inverse $(j_X)^s =(j_X)^p$.\\

On the other hand, the forgetful functor 
$$u:  (\mathsf{Sch}/X, \mathrm{fppf}) \longrightarrow (\mathsf{Sch}_k, \mathrm{fppf})$$ is continuous and co-continuous, with right adjoint the functor $v$ given by pullback along the morphism of $k$-schemes $X \to \mathrm{Spec}\, k$. Note that ${}_{s}u={}_{p}u$ (since $u$ is co-continuous), and [Stacks Project, tag 00XX] tells us that $${}_{s}u= {}_{p}u  :\mathsf{St}(\mathsf{Sch}/X, \mathrm{fppf}) \to \mathsf{St}(\mathsf{Sch}_k, \mathrm{fppf})$$ coincides with $v^p$, i.e.  
$${}_{s}u: \mathsf{St}(\mathsf{Sch}/X, \mathrm{fppf}) \longrightarrow \mathsf{St}(\mathsf{Sch}_k, \mathrm{fppf}) =: \mathsf{St}_k \,, \, \mathbf{F} \longmapsto (Y \mapsto \mathbf{F}(X \times Y \to X) ). $$
Moreover, ${}_{s}u$ preserves limits (being a right adjoint).

\begin{defin}\label{defchangesites}
By a slight abuse of notation, we still denote by ${}_{p}u: \mathsf{PreSt}(\mathsf{Aff}/X) \to \mathsf{PreSt}_k$, the composite $$\xymatrix{\mathsf{PreSt}(\mathsf{Aff}/X) \ar[r]^-{{}_{p}(j_X)} & \mathsf{PreSt}(\mathsf{Sch}/X) \ar[r]^-{{}_{p}u} & \mathsf{PreSt}(\mathsf{Sch}_k) =: \mathsf{PreSt}_k},$$
and by
${}_{s}u: \mathsf{St}(\mathsf{Aff}/X, \mathrm{fppf}) \to \mathsf{St}_k$, the composite $$\xymatrix{\mathsf{St}(\mathsf{Aff}/X, \mathrm{fppf}) \ar[r]^-{{}_{s}(j_X)} & \mathsf{St}(\mathsf{Sch}/X, \mathrm{fppf}) \ar[r]^-{{}_{s}u} & \mathsf{St}(\mathsf{Sch}_k, \mathrm{fppf}) =: \mathsf{St}_k.}$$
We will use boldface fonts $\mathbf{F}$ both for prestacks in groupoids over $\mathsf{Aff}/X$ and for stacks in groupoids in $\mathsf{St}(\mathsf{Aff}/X, \mathrm{fppf})$ or equivalently in $\mathsf{St}(\mathsf{Sch}/X, \mathrm{fppf})$, and we will then write 
$\underline{\mathbf{F}}:={}_{p}u(\mathbf{F}) \in \mathsf{PreSt}_k$ (if $\mathbf{F} \in \mathsf{PreSt}(\mathsf{Aff}/X)$), and 
$\underline{\mathbf{F}}:={}_{s}u(\mathbf{F})=({}_{p}u(\mathbf{F}))^\sharp  \in \mathsf{St}_k$ (if $\mathbf{F}\in \mathsf{St}(\mathsf{Aff}/X, \mathrm{fppf})$).
\end{defin}

\subsection{Geometric Grassmannians at a fixed flag}\label{subsecdefgoemgrass}
Let us fix a smooth affine group scheme $G$ over $k$. For a fiber functor $\cF$ over $X$, we denote by $\bfBun^{G}_\cF$ the prestack on $\mathsf{Aff}/X$
\[ \begin{array}{cccc}
  \mathsf{Bun}^G \circ \mathcal{F}: & (\mathsf{Aff}/X)\op & \longrightarrow   &\mathsf{Grpd} \\
& (S \to X)    & \longmapsto  & \Bun^{G}(\cF(S))
\end{array} \]
where $\mathsf{Bun}^{G}(\cF(S))$ is the groupoid of $G$-bundles on the stack (or scheme, or ind-scheme) $\cF(S)$. When $\cF(S)=\Spec A$ is an affine scheme, we will also simply write $\Bun^G(A)$ instead of $\Bun^G(\Spec A)$. As the group $G$ is fixed, we will omit it from the notation. Moreover, when $\cF$ is one of the fiber functors constructed in Section \ref{sec:fiberfunctors}, we will further simplify the notation and write for example $\bfBun_{\widehat{Z} \smallsetminus D}$ or $\bfBun_{(\hZ^{\hD} \smallsetminus Z)^\affinize}$ instead of $\bfBun_{\cF_{\widehat{Z} \smallsetminus D}}$ or $\bfBun_{\cF_{\hZ^{\widehat{D}} \smallsetminus Z}^\affinize}$.

\begin{rem}\label{rem:algebraisation} Let $T \to X$ be a closed subscheme. Then \cite[Proposition 3.1.1]{Hennion_Melani_Vezzosi_FlagGrass} shows that the morphism of fiber functors $\cF_\hT \to \cF_\hT^\affinize$ induces an equivalence
	\[ \bfBun_{\hT^\affinize} \xrightarrow{\sim} \bfBun_{\hT} \]
	of prestacks over $X$.
\end{rem}

\begin{lem}\label{twowaysthesame}
For an arbitrary prestack $\mathbf{F} \in \mathsf{PSt}(\mathsf{Aff}/X)$, we have a canonical isomorphism in $\mathsf{St}(\mathsf{Sch}/X, \mathrm{fppf})$
$${}_s (j_X) (\mathbf{F}^{\sharp}) \simeq ({}_p (j_X)(\mathbf{F}))^\sharp$$
\end{lem}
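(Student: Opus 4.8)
The plan is to leverage the equivalence recorded in \eqref{eq:EquivalenceOfStacks}: since $j_X$ is both continuous and co-continuous, the functor ${}_s(j_X) = {}_p(j_X)$ is an equivalence $\mathsf{St}(\mathsf{Aff}/X, \mathrm{fppf}) \xrightarrow{\sim} \mathsf{St}(\mathsf{Sch}/X, \mathrm{fppf})$ with quasi-inverse $(j_X)^s = (j_X)^p$. Both $({}_p(j_X)(\mathbf{F}))^\sharp$ and ${}_s(j_X)(\mathbf{F}^\sharp)$ are objects of $\mathsf{St}(\mathsf{Sch}/X, \mathrm{fppf})$, and because an equivalence is conservative it reflects isomorphisms; hence it suffices to exhibit a canonical isomorphism after applying the quasi-inverse $(j_X)^p$ to both sides.

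For the left-hand side this is immediate: as $(j_X)^p$ is quasi-inverse to ${}_s(j_X)$, I get $(j_X)^p\big({}_s(j_X)(\mathbf{F}^\sharp)\big) \simeq \mathbf{F}^\sharp$. For the right-hand side I would proceed in two steps. First, since $j_X$ is \emph{continuous}, the functor $(j_X)^p = (j_X)^s$ commutes with stackification by \cite[tag00XM]{stacks-project}, so $(j_X)^p\big(({}_p(j_X)(\mathbf{F}))^\sharp\big) \simeq \big((j_X)^p\,{}_p(j_X)(\mathbf{F})\big)^\sharp$. Second, $j_X$ is the inclusion of a full subcategory, hence fully faithful, so the counit of the adjunction $\big((j_X)^p, {}_p(j_X)\big)$ is an equivalence $(j_X)^p\,{}_p(j_X) \simeq \mathrm{id}$ — restricting a right Kan extension along a fully faithful functor recovers the original prestack. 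Combining these, $(j_X)^p\big(({}_p(j_X)(\mathbf{F}))^\sharp\big) \simeq \mathbf{F}^\sharp$ as well.

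Having shown that both sides have image $\mathbf{F}^\sharp$ under the equivalence $(j_X)^p$, I would conclude that they are canonically isomorphic in $\mathsf{St}(\mathsf{Sch}/X, \mathrm{fppf})$. The main point to get right — and the reason the statement holds for $j_X$ although it \emph{fails} for a general continuous and co-continuous $f$, as noted just before the Lemma — is that here one trades the problematic interaction of ${}_p(j_X)$ (a right Kan extension) with stackification for the interaction of $(j_X)^p$ with stackification, which is harmless by \cite[tag00XM]{stacks-project}. This trade is exactly what the equivalence $(j_X)^s \dashv {}_s(j_X)$ together with the full faithfulness of $j_X$ make available; for general $f$ neither the equivalence nor $f^p\,{}_p f \simeq \mathrm{id}$ is at hand, which is why the naive identity breaks down. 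I expect the only delicate bookkeeping to be keeping the variances straight (continuity governs $(j_X)^p,(j_X)^s$, while co-continuity governs ${}_p(j_X),{}_s(j_X)$) and confirming that the full-faithfulness input is precisely the counit of the Kan-extension adjunction being invertible.
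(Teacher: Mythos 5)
Your proposal is correct and is essentially the paper's own argument: both proofs rest on exactly the same three facts, namely the equivalence \eqref{eq:EquivalenceOfStacks} induced by $j_X$ (with quasi-inverse $(j_X)^s=(j_X)^p$), the commutation of $(j_X)^p$ with stackification from \cite[tag00XM]{stacks-project}, and the identity $(j_X)^p\,{}_p(j_X)\simeq \mathrm{Id}$ coming from full faithfulness of $j_X$. The only difference is presentational: the paper packages the computation as a Yoneda argument, showing $\mathrm{Hom}(\mathcal{Y},({}_p(j_X)\mathbf{F})^\sharp)\simeq \mathrm{Hom}(\mathcal{Y},{}_s(j_X)(\mathbf{F}^\sharp))$ for every stack $\mathcal{Y}$ via the adjunction $\big((j_X)^s,{}_s(j_X)\big)$, whereas you apply the quasi-inverse $(j_X)^p$ directly to both objects and invoke that an equivalence identifies isomorphism classes.
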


\begin{proof}
To ease notations, we will write $j$ for $j_X$. Let $\mathcal{Y}$ be an arbitrary stack over $\mathsf{Sch}/X$. Since $j^s$ is an equivalence, on one hand we have
$$\mathrm{Hom}(\mathcal{Y}, ({}_p j(\mathbf{F}))^\sharp) \simeq \mathrm{Hom}(j^s(\mathcal{Y}), j^s (({}_p j(\mathbf{F}))^\sharp)) \simeq \mathrm{Hom}(j^s(\mathcal{Y}), (j^p {}_p j(\mathbf{F}))^\sharp)$$ (where we used \cite[tag00XM]{stacks-project} in the last isomorphism). Note that $j^p {}_p j \simeq \mathrm{Id}: \mathsf{PSt}(\mathsf{Aff/X})$, so that
$$\mathrm{Hom}(\mathcal{Y}, ({}_p j(\mathbf{F}))^\sharp) \simeq \mathrm{Hom}(j^s(\mathcal{Y}), \mathbf{F}^\sharp) \simeq \mathrm{Hom}(\mathcal{Y}, {}_s j(\mathbf{F}^\sharp)).$$ 
\end{proof}

\begin{rem}\label{warning} If we are given a prestack $\mathbf{F}$ on $\mathsf{Aff}/X$ (for example $\mathbf{F}= \bfBun_\cF$), we consider the following \emph{two procedures} to induce a stack over $k$ (for the fppf topology) via the functor $u: \mathsf{Aff}/X \to \mathsf{Sch}_k$:  we can either stackify $\mathbf{F}$ and then push it to $k$ through ${}_s u$, or push $\mathbf{F}$ to $k$ through ${}_p u$, and the stackify the resulting prestack over $k$. These two procedures yield \emph{different}\footnote{This is essentially due to the fact that the right adjoint to $u: \mathsf{Sch}/X \to \mathsf{Sch}_k$ is continuous but not co-continuous.} results, in general, i.e.
with the notation of Definition \ref{defchangesites} we have  
$$\underline{\mathbf{F}^\sharp} = {}_s u (\mathbf{F}^{\sharp}) \cancel{\simeq} ({}_p u(\mathbf{F}))^\sharp = (\underline{\mathbf{F}})^\sharp.$$
Howewer, since $u$ is co-continuous, these procedures do agree when $\mathbf{F}$ is already a stack.
\end{rem}

For a fiber functor $\mathcal{F}$ on $X$, we have defined $\mathbf{Bun}_{\mathcal{F}}$ which is prestack over $\mathsf{Aff}/X$. Now, using Definition \ref{defchangesites}, we get a prestack $$\underline{\mathbf{Bun}_\mathcal{F}}:= {}_p u(\mathbf{Bun}_\mathcal{F})$$ over $k$, and, by stackification, a stack $(\underline{\mathbf{Bun}_\mathcal{F}})^{\sharp}$ over $k$. We will simplify these notations by writing:
$$\underline{\mathbf{Bun}}_\mathcal{F}:= \underline{\mathbf{Bun}_\mathcal{F}} = {}_p u(\mathbf{Bun}_\mathcal{F})  \, \, \in \mathsf{PSt}_k \qquad \qquad
\underline{\mathbf{Bun}}_{\mathcal{F}}^{\sharp}:= (\underline{\mathbf{Bun}_\mathcal{F}})^{\sharp} \,\, \in \mathsf{St}_k.$$
Thus, $\underline{\mathbf{Bun}}^\sharp_{\cF}$ is the stack over $k$ associated to the prestack
\[ Y \longmapsto \mathrm{lim}_{\mathrm{Spec}\, R \, /X \,\,  \longrightarrow  \,\, (Y\times X) /X}\,\,  \mathsf{Bun}(\cF (\mathrm{Spec}\, R \, / X)). \]

\begin{rem}
Note that, if we denote by $\bfBun^\sharp_\cF$ the stack on $(\mathsf{Aff}/X, \mathrm{fppf})$ associated to the prestack $\bfBun_\cF$, we get by Remark \ref{warning} that in general $\underline{\mathbf{Bun}}^\sharp_{\cF} \cancel{\simeq} {}_s u(\bfBun^\sharp_\cF)$ in $\mathsf{St}_k$, unless $\bfBun_\cF$ is already a stack.\\
\end{rem}

Given a map of fiber functors over $X$
\[ \mathring{\cF} \longrightarrow \cF \]
there is an induced natural tranformation
\[ \bfBun_\cF \longrightarrow \bfBun_{\mathring{\cF}} \]
given by restriction of bundles, and we can consider the fiber product 
\[ \mathbf{Gr}^{\mathring{\cF}\to 	\cF} := \bfBun_\cF \times_{\bfBun_{\mathring{\cF}}}  \{*\} \,\,\, \in \mathsf{PSt}(\mathsf{Aff}/X). \]
If the morphism between the fiber functors $\cF$ and $\mathring{\cF}$ is implicit, we will simply use the notation $\mathbf{Gr}^{(\cF,\mathring{\cF})} \in \mathsf{PSt}(\mathsf{Aff}/X)$. If $u: \mathsf{Aff}/X \to \mathsf{Sch}_k$ is the forgetful functor, using again Definition \ref{defchangesites}, we put
\begin{equation}\label{notazzzione}\underline{\mathbf{Gr}}^{(\cF,\mathring{\cF})} := \underline{\mathbf{Gr}^{(\cF,\mathring{\cF})}}={}_p u({\mathbf{Gr}}^{(\cF,\mathring{\cF})}) \,\,\, \in \mathsf{PSt}_k \qquad \qquad \underline{\mathbf{Gr}}^{(\cF,\mathring{\cF}), \sharp} :=  (\underline{\mathbf{Gr}}^{(\cF,\mathring{\cF})})^{\sharp} \,\, \, \in \mathsf{St}_k .
\end{equation}

We will refer to $\mathbf{Gr}^{(\cF,\mathring{\cF})}$,$\underline{\mathbf{Gr}}^{(\cF,\mathring{\cF})}$ and $\underline{\mathbf{Gr}}^{(\cF,\mathring{\cF}), \sharp}$  as the \emph{Grassmannians associated to the pair of fiber functors $(\cF, \mathring{\cF})$}.
In particular, we will now use the fiber functors introduced in Section \ref{sec:fiberfunctors} to define geometrically meaningful Grassmannians, and then compare them to the 2-dimensional Grassmannians of Definition \ref{def:grasmmannianequozienti} (see Theorem \ref{thm:comparazioneGrass}).

\begin{defin}\label{defin:geometricGrassmannians}
	Let $D$ be an effective Cartier divisor in $X$, and let $ Z\subset X$ be a closed subscheme of codimension $1$. We define the following prestacks over $\mathsf{Aff}/X$ (where, for simplicity, we write $*$ for $X$ viewed as a (pre)stack over $\mathsf{Aff}/X$, i.e. the final prestack over $\mathsf{Aff}/X$).
	
	\begin{itemize}
		\item For $(\cF, \mathring{\cF})=\left(\cF_{\hZ \smallsetminus D}, \cF_{(\hZ^{\widehat{D}} \smallsetminus Z) \smallsetminus D}\right)$, we define 
		$$\mathbf{Gr}_{D,Z}^{L} := \mathbf{Bun}_{\widehat{Z} \smallsetminus D} 
		\times_{\mathbf{Bun}_{(\widehat{Z}^{ \widehat{D}} \smallsetminus Z )\smallsetminus D } } \{ *\} \,\,\,\, \in \mathsf{PSt}(\mathsf{Aff}/X)$$ to be the \emph{geometric Grassmannian of the loop group} at the fixed flag $(D,Z)$.\\ The corresponding stack over $k$, as in (\ref{notazzzione}), will be denoted as $\underline{\mathbf{Gr}}_{D,Z}^{L, \sharp}$.
		
		\item For $(\cF, \mathring{\cF})=\left( \cF_{ \hZ^{\widehat{D}} \smallsetminus Z }^\affinize, \cF_{(\hZ^{\widehat{D}} \smallsetminus Z) \smallsetminus D}\right)$, we define
		$${}^L\mathbf{Gr}_{D,Z} := \mathbf{Bun}_{(\widehat{Z}^{ \widehat{D}} \smallsetminus Z)^\affinize}
		\times_{\mathbf{Bun}_{(\widehat{Z}^{ \widehat{D}} \smallsetminus Z) \smallsetminus D}} \{ *\} \,\,\,\, \in \mathsf{PSt}(\mathsf{Aff}/X)$$
		to be the \emph{geometric loop Grassmannian} at the fixed flag $(D,Z)$.\\ The corresponding stack over $k$, as in (\ref{notazzzione}), will be denoted as ${}^L\underline{\mathbf{Gr}}_{D,Z}^{\sharp}$.
		
		\item For $(\cF, \mathring{\cF})=\left( \cF_{\hZ}^\affinize, \cF_{\hZ^{\hD} \smallsetminus Z} \right) $, we define
		$$\mathbf{Gr}_{D,Z}^{J} := \mathbf{Bun}_{\hZ^\affinize} \times_{\mathbf{Bun}_{\widehat{Z}^{\widehat{D}} \smallsetminus Z }} \{ *\}  \,\,\,\, \in \mathsf{PSt}(\mathsf{Aff}/X)$$
		to be the \emph{geometric jet Grassmannian} at the fixed flag $(D,Z)$.\\ The corresponding stack over $k$, as in (\ref{notazzzione}), will be denoted as $\underline{\mathbf{Gr}}_{D,Z}^{J, \sharp}$.
		
		\item For $(\cF, \mathring{\cF}) = \left( \cF_{\hZ}^\affinize, \cF_{ (\hZ^{\widehat{D}} \smallsetminus Z) \smallsetminus D } \right)$, we define
		$$\mathbf{Gr}_{D,Z}^{\mathrm{big}} := \mathbf{Bun}_{\widehat{Z}}
		\times_{\mathbf{Bun}_{(\widehat{Z}^{ \widehat{D}} \smallsetminus Z) \smallsetminus D}} \{ *\} \,\,\,\, \in \mathsf{PSt}(\mathsf{Aff}/X)$$
		to be the \emph{geometric big Grassmannian} at the fixed flag $(D,Z)$.\\ The corresponding stack over $k$, as in (\ref{notazzzione}), will be denoted as $\underline{\mathbf{Gr}}_{D,Z}^{\mathrm{big}, \sharp}$.
		
		\item For $(\cF, \mathring{\cF}) = \left( \cF_{ \hZ^{\widehat{D}} \smallsetminus Z }^\affinize \coprod_{\cF_{ \hZ^{D} \smallsetminus Z }} \cF_{ \hZ^{D} }^\affinize, \cF_{ (\hZ^{\widehat{D}} \smallsetminus Z) \smallsetminus D } \right)$, we define
		$$\mathbf{Gr}^{(2)}_{D,Z} := (\, \mathbf{Bun}_{(\widehat{Z}^{ \widehat{D}} \smallsetminus Z)^{\mathrm{aff}}} \times_{\mathbf{Bun}_{\widehat{Z}^{D} \smallsetminus Z}} \mathbf{Bun}_{\widehat{Z}^{D}} \,)
		\times_{\mathbf{Bun}_{(\widehat{Z}^{  \widehat{D}} \smallsetminus Z) \smallsetminus D}} \{ *\} \,\,\,\, \in \mathsf{PSt}(\mathsf{Aff}/X)$$
		to be the \emph{geometric 2-dimensional local field Grassmannian} at the fixed flag $(D,Z)$.\\ The corresponding stack over $k$, as in (\ref{notazzzione}), will be denoted as $\underline{\mathbf{Gr}}_{D,Z}^{(2), \sharp}$.
		
	\end{itemize}
\end{defin}

\begin{rem}\label{notallstacks} Notice that, depending on the choice of the fiber functor $\cF$, the prestack $\bfBun_\cF$ may be a stack on $(\mathsf{Aff}/X, \mathrm{fppf})$. This is the case for example if $\cF$ is representable, or if $\cF = \cF_{\hZ \smallsetminus Z}$ for $Z\subset X$ a closed subscheme (see \cite[Proposition 2.6.2]{Hennion_Melani_Vezzosi_FlagGrass}, and \cite[Theorem 7.8]{mathew2019faithfully} for the affine case). In general, we do \emph{not} know whether, for arbitrary $X$, $D$ and $Z$, \emph{all} the different $\bfBun_{\cF}$ appearing in the Grassmannians of Definition \ref{defin:geometricGrassmannians} are étale or fppf stacks.
We also remark that the same problem would arise by considering the derived version $\mathbb{R}\mathsf{Bun}$ of $\mathsf{Bun}$ and derived completions instead of the classical completions considered throughout the paper.

\end{rem}


\begin{rem}
Note that, since the stackification functor $(-)^\sharp$ commutes with fiber products (\cite[tag04Y1]{stacks-project}), we have 
$${}^L\underline{\mathbf{Gr}}^\sharp_{D,Z} := \underline{\mathbf{Bun}}^\sharp_{\widehat{Z}^{ \widehat{D}} \smallsetminus Z}
		\times_{\underline{\mathbf{Bun}}^\sharp_{((\widehat{Z}^{ \widehat{D}}) \smallsetminus Z) \smallsetminus D}} \mathrm{Spec}\, k $$
		and similarly for the other geometric  Grassmannians appearing in Definition \ref{defin:geometricGrassmannians}.
\end{rem}
 
The geometric jet Grassmannian admits also a more global description, as showed by the next lemma.
\begin{lem}\label{lem:JGrLocalVersion}
	There is an equivalence
	\[ \underline{\mathbf{Gr}}_{D,Z}^{J} \simeq \mathbf{Bun}_{\widehat{D}}
	\times_{\mathbf{Bun}_{\widehat{D} \smallsetminus Z}} \{ *\}  \]
\end{lem}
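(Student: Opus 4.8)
The plan is to establish the equivalence first at the level of prestacks over $\mathsf{Aff}/X$, and only then descend it to $k$. Since the functor ${}_p u$ of Definition \ref{defchangesites} is a right adjoint, it commutes with the fibre products defining the two sides, so it suffices to produce a canonical equivalence
\[ \mathbf{Gr}_{D,Z}^{J} = \mathbf{Bun}_{\hZ^\affinize} \times_{\mathbf{Bun}_{\widehat{Z}^{\widehat{D}} \smallsetminus Z}} \{ *\} \;\simeq\; \mathbf{Bun}_{\widehat{D}} \times_{\mathbf{Bun}_{\widehat{D} \smallsetminus Z}} \{ *\} \]
in $\mathsf{PSt}(\mathsf{Aff}/X)$. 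To compare the two I would write $\widehat{D} = \colim_n D_n$ as in Section \ref{sec:fiberfunctors}, with $\cF_{D_n}(S/X) = \Spec(R/I_{D,R}^{n+1})$ for $S = \Spec R$. A $G$-bundle on the ind-scheme $\widehat D$ is a compatible system of bundles on the $D_n$, so $\mathbf{Bun}_{\widehat{D}} \simeq \lim_n \mathbf{Bun}_{D_n}$, and since removing $Z$ commutes with the colimit also $\mathbf{Bun}_{\widehat{D}\smallsetminus Z} \simeq \lim_n \mathbf{Bun}_{D_n \smallsetminus Z}$; as limits commute with fibre products, the right-hand side becomes $\lim_n (\mathbf{Bun}_{D_n} \times_{\mathbf{Bun}_{D_n \smallsetminus Z}} \{ *\})$. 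On the left, the definition of $\cF_{\hZ^{\widehat{D}}\smallsetminus Z}$ as a colimit over $n$ gives $\mathbf{Bun}_{\widehat{Z}^{\widehat{D}} \smallsetminus Z} \simeq \lim_n \mathbf{Bun}_{\hZ^{D_n}\smallsetminus Z}$, while Remark \ref{rem:algebraisation} and Lemma \ref{lem:formalcompletionsofZ} let me rewrite the source as $\mathbf{Bun}_{\hZ^\affinize} \simeq \mathbf{Bun}_{\hZ} \simeq \lim_n \mathbf{Bun}_{(\hZ^{D_n})^\affinize}$, using that the pro-systems of ideals $\{I_{Z,R}^m\}$ and $\{I_{D,R}^n + I_{Z,R}^m\}$ are cofinal (recall $I_{D,R}\subset I_{Z,R}$). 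Thus $\mathbf{Gr}_{D,Z}^{J} \simeq \lim_n (\mathbf{Bun}_{(\hZ^{D_n})^\affinize} \times_{\mathbf{Bun}_{\hZ^{D_n}\smallsetminus Z}} \{ *\})$.

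With both sides written as limits over $n$, the remaining task is a levelwise comparison: for each $n$ I would prove
\[ \mathbf{Bun}_{D_n} \times_{\mathbf{Bun}_{D_n \smallsetminus Z}} \{ *\} \;\simeq\; \mathbf{Bun}_{(\hZ^{D_n})^\affinize} \times_{\mathbf{Bun}_{\hZ^{D_n}\smallsetminus Z}} \{ *\}. \]
This is exactly Beauville--Laszlo gluing for $G$-bundles on $D_n$ along $Z$. Here $Z$ is an effective Cartier divisor in $D_n$: locally, $X$ smooth and $D$ smooth through $Z$ furnish a regular system of parameters $g,h$ with $D = \{g=0\}$ and $Z = \{g=h=0\}$, so $h$ is a non-zero-divisor modulo $g^{n+1}$ and cuts out $Z$ inside $D_n$. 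Beauville--Laszlo descent then identifies $\mathbf{Bun}_{D_n}$ with $\mathbf{Bun}_{D_n \smallsetminus Z} \times_{\mathbf{Bun}_{\hZ^{D_n}\smallsetminus Z}} \mathbf{Bun}_{(\hZ^{D_n})^\affinize}$, and taking the fibre over the trivial bundle on $D_n \smallsetminus Z$ (the restriction map being the first projection) yields the displayed levelwise equivalence by associativity of fibre products. Passing to $\lim_n$ and reading the identifications of the first paragraph backwards then produces the claim over $\mathsf{Aff}/X$, and applying ${}_p u$ gives the stated equivalence.

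I expect the main obstacle to be the Beauville--Laszlo step, since it must be carried out in families over the varying test scheme $S=\Spec R$ rather than over a field: what is genuinely needed is that the square of fiber functors $\cF_{D_n\smallsetminus Z}$, $\cF_{(\hZ^{D_n})^\affinize}$, $\cF_{\hZ^{D_n}\smallsetminus Z}$ induces a fibre-product description of $\mathbf{Bun}_{D_n}$ \emph{uniformly in} $R$, i.e.\ a relative formal-gluing statement for $G$-bundles along a relatively Cartier $Z$. For this I would invoke the formal-gluing results of \cite[Section 2.6]{Hennion_Melani_Vezzosi_FlagGrass} (in the spirit of Beauville--Laszlo, cf.\ \cite[\S 1.4]{Zhu2017}), after verifying that their hypotheses apply to each thickening $D_n$. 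A secondary technical point is the algebraisation/cofinality argument identifying $\mathbf{Bun}_{\hZ^\affinize}$ with $\lim_n \mathbf{Bun}_{(\hZ^{D_n})^\affinize}$, which rests on Remark \ref{rem:algebraisation} applied both to $Z\subset X$ and to $Z\subset D_n$, combined with Lemma \ref{lem:formalcompletionsofZ}.
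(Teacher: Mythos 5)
Your proposal is correct in substance and rests on the same key input as the paper's own proof: formal gluing for $G$-bundles along $Z$ inside the formal neighborhood of $D$, quoted from \cite{Hennion_Melani_Vezzosi_FlagGrass}, combined with the identification of the two completions of $Z$ (Lemma \ref{lem:formalcompletionsofZ} together with Remark \ref{rem:algebraisation}). The difference is one of packaging. The paper applies \cite[Theorem 3.3.2]{Hennion_Melani_Vezzosi_FlagGrass} \emph{once}, directly to the ind-scheme $\widehat{D}$, obtaining
\[ \mathbf{Bun}_{\widehat{D}} \simeq \mathbf{Bun}_{\cF_{\hZ^{\hD}}^\affinize} \times_{\mathbf{Bun}_{\hZ^{\widehat{D}} \smallsetminus Z}} \mathbf{Bun}_{\widehat{D}\smallsetminus Z}, \]
then cancels against the trivialization and concludes by Lemma \ref{lem:formalcompletionsofZ}. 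You instead unwind $\widehat{D}$ into its thickenings $D_n$, glue levelwise, and commute everything past the limits. Your auxiliary steps are sound: $\mathbf{Bun}$ of an ind-scheme is the limit of the levelwise $\mathbf{Bun}$'s essentially by definition, limits commute with fiber products, ${}_p u$ preserves fiber products being a right adjoint, and the cofinality of the systems $\{I_{Z,R}^m\}$ and $\{I_{D,R}^n + I_{Z,R}^m\}$ (valid since $I_{D,R}\subset I_{Z,R}$) gives exactly the $\mathbf{Bun}$-level version of Lemma \ref{lem:formalcompletionsofZ} that you need; note only that applying Remark \ref{rem:algebraisation} to $Z\subset D_n$ goes slightly beyond its literal statement, though the underlying \cite[Proposition 3.1.1]{Hennion_Melani_Vezzosi_FlagGrass} covers it. So your route is more hands-on but buys nothing the one-shot application does not already give.

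Two caveats on your Beauville--Laszlo step. First, the scheme-theoretic claim that $h$ cuts out $Z$ inside $D_n$ is false: $V(h)\cap D_n$ is a thickening of $Z$ (for $X=\mA^2$, $D=\{y=0\}$, $Z$ the origin, one gets $V(x)\cap D_n = \Spec\left(k[y]/(y^{n+1})\right)$, not $\Spec k$). This is harmless, because the formal completion and the open complement depend only on the underlying closed subset, but it should be said. Second, and more seriously as regards generality: your verification of the Cartier hypothesis uses that $D$ is smooth and that $Z$ is a single reduced point, whereas the lemma is stated in the setting of Definition \ref{defin:geometricGrassmannians}, where $D$ is an arbitrary effective Cartier divisor (possibly singular or non-reduced at $Z$) and $Z$ an arbitrary codimension-one closed subscheme of $D$. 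Classical Beauville--Laszlo, as you set it up, does not apply there. The fix is the one you yourself point to: the formal-gluing results of \cite{Hennion_Melani_Vezzosi_FlagGrass} impose no Cartier or regularity hypothesis on the center, so the levelwise gluing (and indeed the paper's direct version on $\widehat{D}$) holds for arbitrary $(D,Z)$; your smoothness-based verification is then superfluous rather than wrong, but as literally written your argument only covers the smooth case.
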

\begin{proof}
	The claim follows from formal gluing inside $\widehat{D}$, with respect to the closed subscheme $Z$. More explicitly, \cite[Theorem 3.3.2]{Hennion_Melani_Vezzosi_FlagGrass} yields an equivalence
	\[  \mathbf{Bun}_{\widehat{D}} \simeq  \mathbf{Bun}_{\cF_{\hZ^{\hD}}^\affinize} \times_{  \mathbf{Bun}_{\hZ^{\widehat{D}} \smallsetminus Z  } } \mathbf{Bun}_{\widehat{D} \smallsetminus Z } \]
	which, together with Lemma \ref{lem:formalcompletionsofZ}, implies the claim. 
\end{proof}

\subsection{Comparison with quotient Grassmannians}
The following theorem is the main result of this Section, and establishes the relation between the two dimensional quotient Grassmannians of Definition \ref{def:grasmmannianequozienti}  and the geometric Grassmannians of Definition \ref{defin:geometricGrassmannians}, for an \emph{arbitrary} smooth affine group $G$ over $k$.

\begin{thm}\label{thm:comparazioneGrass}
	Let $G$ be an arbitrary smooth affine group over $k$. Suppose $X= \mathbb{A}_{k}^2$, $D=\{ y=0 \}$, and $Z= \{ x=y=0 \}$. Let $\underline{\mathbf{Gr}}_{\mathbb{A}^1,0}$ be the prestack over $k$ associated to any of the Grassmannians of Definition \ref{defin:geometricGrassmannians}, apart from $\mathbf{Gr}_{\mathbb{A}^1,0}^L$. Then the associated fppf stack $\underline{\mathbf{Gr}}_{\mathbb{A},0}^\sharp$ is equivalent to the sheafification of the corresponding functor introduced in Definition \ref{def:grasmmannianequozienti}. More explicitly, for every group $G$ we have equivalences 
	\[ {}^L\underline{\mathbf{Gr}}_{\mathbb{A}^1,0}^\sharp \simeq {}^L\graff_G^\sharp \ \ \ \underline{\mathbf{Gr}}_{\mathbb{A}^1,0}^{(2), \sharp} \simeq \graff^{(2), \sharp}_G, \ \ \ \underline{\mathbf{Gr}}_{\mathbb{A}^1,0}^{\mathrm{big}, \sharp} \simeq \graff^{\mathrm{big}, \sharp}_G, \ \ \ \underline{\mathbf{Gr}}_{\mathbb{A}^1,0}^{J, \sharp} \simeq \graff_G^{J, \sharp} .\]
\end{thm}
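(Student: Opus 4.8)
The plan is to compare the two sides objectwise on a $k$-algebra $R$, exploiting that $X=\mathbb{A}^2_k$ is affine. By Definition \ref{defchangesites}, the prestack $\underline{\mathbf{Gr}}^{\bullet}_{\mathbb{A}^1,0}={}_p u(\mathbf{Gr}^{\bullet}_{\mathbb{A}^1,0})$ is computed as a limit over the comma category of objects $\Spec R'/X \to (\Spec R\times X)/X$; since $\Spec R\times X=\Spec R[x,y]$ is affine, this category has $(\Spec R[x,y],\mathrm{id})$ as a terminal object through which every object maps, and as $\mathbf{Gr}^{\bullet}$ is contravariant the limit collapses to evaluation there, giving $\underline{\mathbf{Gr}}^{\bullet}_{\mathbb{A}^1,0}(R)=\mathbf{Gr}^{\bullet}_{\mathbb{A}^1,0}(\Spec R[x,y]/X)$. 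First I would rerun the computations of the Example at the end of Section \ref{sec:fiberfunctors} over the general base $\Spec R[x,y]/X$ rather than over $k$, in order to read off, in each case, the \emph{integral} ring $B(R)$ carrying the bundle and the \emph{generic} ring $A(R)$ carrying the trivialization: $(B,A)=(R[\![x]\!][\![y]\!],\,R(\!(x)\!)(\!(y)\!))$ for the big case, $(R(\!(x)\!)[\![y]\!],\,R(\!(x)\!)(\!(y)\!))$ for ${}^L$, $(R[\![x]\!][\![y]\!],\,R(\!(x)\!)[\![y]\!])$ for the jet case, and $(\cO''(R),\,R(\!(x)\!)(\!(y)\!))$ for the $2$-dimensional local field case. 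The jet case requires rewriting the trivialization over the ind-scheme $\colim_n \Spec R(\!(x)\!)[y]/(y^n)$ as one over $\Spec R(\!(x)\!)[\![y]\!]$ (via Lemma \ref{lem:JGrLocalVersion}, Lemma \ref{lem:formalcompletionsofZ}, and the fact that an affine $G$ commutes with $\lim_n R(\!(x)\!)[y]/(y^n)$); the $(2)$ case requires identifying $\mathbf{Bun}$ of the pushout fiber functor with the fibre product $\mathbf{Bun}_{(\widehat{Z}^{\widehat{D}}\smallsetminus Z)^\affinize}\times_{\mathbf{Bun}_{\widehat{Z}^{D}\smallsetminus Z}}\mathbf{Bun}_{\widehat{Z}^{D}}$ and checking that the corresponding fibre product of rings $R(\!(x)\!)[\![y]\!]\times_{R(\!(x)\!)}R[\![x]\!]$ is exactly $\cO''(R)$.

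With these identifications, $\underline{\mathbf{Gr}}^{\bullet}_{\mathbb{A}^1,0}(R)$ becomes the groupoid of pairs $(\cE,\varphi)$ with $\cE$ a $G$-bundle on $\Spec B(R)$ and $\varphi$ a trivialization of $\cE$ over $\Spec A(R)$. Next I would construct the natural comparison map out of the quotient presheaf, sending $g\in G(A(R))$ to the pair consisting of the trivial bundle on $\Spec B(R)$ together with the trivialization $g\cdot(-)$ over $\Spec A(R)$. Because $B(R)\hookrightarrow A(R)$ is injective we get $G(B(R))\hookrightarrow G(A(R))$, and an isomorphism of two such pairs is precisely an element of $G(B(R))$ carrying $g_1$ to $g_2$; hence the fibres of the map are exactly the cosets $g\,G(B(R))$, so the induced map $\graff^{\bullet}_G\to\underline{\mathbf{Gr}}^{\bullet}_{\mathbb{A}^1,0}$ is a monomorphism of presheaves. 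The same rigidity shows the pairs have no nontrivial automorphisms, so each $\underline{\mathbf{Gr}}^{\bullet}_{\mathbb{A}^1,0}$ is a presheaf of sets (cf. Remark \ref{notallstacks}).

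It then remains to establish local surjectivity for the fppf topology on $\Spec R$, i.e. that every pair $(\cE,\varphi)$ is, after an fppf cover $R\to R'$, isomorphic to one in the image; equivalently, that $\cE$ becomes trivial on $\Spec B(R')$, in which case a trivialization $\psi$ yields $g:=\psi^{-1}\varphi\in G(A(R'))$ recovering the pair. For the big and jet cases $B(R)=R[\![x]\!][\![y]\!]$ is $(x,y)$-adically complete with residue ring $R$, so by smoothness of $G$ the torsor lifts from its restriction to $\Spec R$, which is étale-locally trivial; this settles these two cases for arbitrary $G$. For ${}^L$ and $(2)$, reduction via $y$-adic (resp.\ $x$-adic) completeness lands on $\Spec R(\!(x)\!)$, and the required input is that $G$-torsors on $\Spec R(\!(x)\!)$ are fppf-locally trivial on $\Spec R$, which I would import from smoothness of $G$ together with the faithfully flat descent and sheaf results of \cite{CesnaGrassmanniana} underlying Corollary \ref{cor:quotientSheaf} (and \cite{mathew2019faithfully} for the affine formal input). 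Finally, a presheaf monomorphism that is a local epimorphism becomes an isomorphism after fppf sheafification (sheafification being left exact), giving $\graff^{\bullet,\sharp}_G\simeq(\underline{\mathbf{Gr}}^{\bullet}_{\mathbb{A}^1,0})^\sharp=\underline{\mathbf{Gr}}^{\bullet,\sharp}_{\mathbb{A}^1,0}$ in each of the four cases, in the notation of \eqref{notazzzione}.

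The main obstacle I expect is twofold. First is the fiber-functor bookkeeping over the nontrivial base $\Spec R[x,y]/X$ rather than over $k$ — in particular correctly matching the pushout fiber functor of the $(2)$-case with $\cO''(R)$, and the ind-scheme trivialization locus of the jet case with $R(\!(x)\!)[\![y]\!]$. More seriously, the fppf-local triviality of $G$-torsors on $\Spec R(\!(x)\!)$ needed for ${}^L$ and $(2)$ is exactly the phenomenon whose failure in the $y$-direction forces the exclusion of the loop-group Grassmannian $\mathbf{Gr}^{L}$ from the statement, and it must be imported from the one-variable sheaf theory of the affine Grassmannian rather than proved by hand.
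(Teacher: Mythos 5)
Your overall architecture — collapse ${}_p u$ to evaluation at $\Spec R[x,y]$ using affineness of $\mathbb{A}^2_k$, build the comparison map $\Phi$ out of the quotient presheaf, prove it is a monomorphism and fppf-locally surjective, and conclude after sheafification — is exactly the paper's strategy for the big, jet and $(2)$ cases, and your big/jet local-surjectivity argument (restrict to $x=y=0$, trivialize on a cover of $R$, lift through the $(x,y)$-adic completion by smoothness of $G$) coincides with the paper's. The genuine gap is in your treatment of ${}^L\underline{\mathbf{Gr}}$. You propose to restrict the bundle on $R(\!(x)\!)[\![y]\!]$ to $y=0$ and then invoke the statement that ``$G$-torsors on $\Spec R(\!(x)\!)$ are fppf-locally trivial on $\Spec R$''. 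That statement is not contained in \cite{CesnaGrassmanniana} and is in fact false for groups covered by the theorem: already for $R=k$ of characteristic $\neq 2$ and $G=\mathbb{Z}/2$ (or $G=\mathrm{O}_n$), the torsor classified by the class of $x$ in $k(\!(x)\!)^{*}/(k(\!(x)\!)^{*})^{2}$ remains nontrivial over $R'(\!(x)\!)$ for \emph{every} faithfully flat $k\to R'$ (map $R'$ to a residue field and use that $x$ has odd valuation there). The underlying reason is that covers of the form $R(\!(x)\!)\to R'(\!(x)\!)$ are far from cofinal among fppf covers of $R(\!(x)\!)$, so smoothness of $G$ gives you nothing once you have forgotten the generic trivialization. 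The paper's fix is precisely not to forget it: the pair $(\mathcal{E},\varphi)$ is literally an $R(\!(x)\!)$-point of the one-variable geometric affine Grassmannian in the variable $y$, and Corollary \ref{cor:quotientSheaf} (the quotient presheaf $\graff_G$ is already an fppf sheaf for smooth affine $G$, hence coincides pointwise with the geometric functor) forces $\mathcal{E}$ to be trivial over $R(\!(x)\!)[\![y]\!]$ on the nose, with no cover of $R$ needed. This yields ${}^L\underline{\mathbf{Gr}}_{\mathbb{A}^1,0}(R)\simeq\graff_G(R(\!(x)\!))\simeq{}^L\graff_G(R)$ as \emph{presheaves}, which is stronger than the sheaf-level claim. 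The same correction applies to your $(2)$ case: the trivialization of $\mathcal{E}_1$ along $y=0$ over $R'(\!(x)\!)$ must be produced from the patching datum $\phi$ with the component $\mathcal{E}_2$ on $R[\![x]\!]$ (which you trivialize via $x=0$ and lifting), not from any general statement about bare torsors on $R(\!(x)\!)$; this is how the paper argues, after identifying $\Bun(\mathcal{O}''(R))$ with the fibre product of $\Bun$'s via \cite[Lemma 4.5]{AHLHR}.

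A second, smaller gap: your rigidity argument (injectivity of $G(B(R))\to G(A(R))$) only kills automorphisms of pairs whose underlying bundle is trivial, yet you claim each $\underline{\mathbf{Gr}}^{\bullet}_{\mathbb{A}^1,0}$ is a presheaf of sets. For a general pair $(\mathcal{E},\varphi)$ an extra input is needed; the paper uses schematic density of $\Spec R(\!(x)\!)(\!(y)\!)$ in $\Spec \mathcal{O}''(R)$ together with separatedness of $\mathcal{E}$ in the $(2)$ case, and, in the jet/big cases, the fact that $\underline{\mathbf{Bun}}_{\hZ^{\affinize}}$ is an fppf stack (from \cite{Hennion_Melani_Vezzosi_FlagGrass}) so that vanishing of an automorphism may be checked on the cover where the pair becomes trivial. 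Without some such separation property, your final step ``presheaf mono $+$ locally surjective $\Rightarrow$ isomorphism after sheafification'' is not yet justified, since a priori the target is a stack in groupoids rather than a sheaf of sets.
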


\begin{proof}
We prove the four claimed isomorphisms via different techniques. We divide the proof accordingly.

\subsubsection*{The geometric loop Grassmannian}

Take an affine scheme $S=\Spec R$ over $k$. By definition,
\[ \underline{\mathbf{Bun}}_{\hZ^{\hD} \smallsetminus Z} (S) \simeq \Bun\left(\cF^\affinize_{ \hZ^{\widehat{D}} \smallsetminus Z}(S \times X / X )\right) \]
where $S\times X \simeq \Spec R[x,y] $. In particular, $\cF^\affinize_{ \hZ^{D} \smallsetminus Z }( S \times X/X) \simeq \Bun \left( \Spec\left( \lim_n R(\!(x)\!)[y]/y^n \right) \right) $ and therefore
\[ \underline{\mathbf{Bun}}_{\hZ^{\hD} \smallsetminus Z} (S) \simeq \Bun ( R(\!(x)\!)[\![y]\!]). \]
Similarly, we have
\[  \underline{\mathbf{Bun}}_{(\hZ^{\hD} \smallsetminus Z)\smallsetminus D} (S) \simeq \Bun ( R(\!(x)\!)(\!(y)\!)). \]
In particular, $S$-points of ${}^L\underline{\mathbf{Gr}}_{\mathbb{A}^1,0}$ are exactly $R(\!(x)\!)$-points of the usual affine Grassmannian. By Corollary \ref{cor:quotientSheaf}, the affine Grassmannian can be identified with the presheaf quotient $\graff_G$, and we get
\[ {}^L\underline{\mathbf{Gr}}_{\mathbb{A}^1,0}(S) \simeq \graff_{G}(R(\!(x)\!)) \simeq {}^L\graff_G(R) \]
where the last equivalence follows from the definition of ${}^L\graff_G$. As the two presheaves ${}^L\underline{\mathbf{Gr}}_{\mathbb{A}^1,0}$ and ${}^L\graff_G$ are equivalent, so are their sheafifications, as claimed.

\subsubsection*{The geometric 2-dimensional local field Grassmannian}
First of all, we observe that since $\mathbb{A}_k^2$ is affine, Definition \ref{defin:geometricGrassmannians} together with Definition \ref{defchangesites}, immediately tell us that 
$$\underline{\mathbf{Gr}}_{\mathbb{A}^1,0}^{(2)} : \mathsf{CAlg}_k \ni R \longmapsto (\mathrm{Bun}(R(\!(x)\!)[\![y]\!]) \times_{\mathrm{Bun}(R(\!(x)\!))} \mathrm{Bun}(R[\![x]\!])) \times_{\mathrm{Bun}(R(\!(x)\!)(\!(y)\!))} \{*\} \in \mathsf{Grpds}.$$

Now, for any $R \in \mathbf{CAlg}_{k}$, we have a pullback diagram in $\mathbf{CAlg}_{k}$
$$\xymatrix{R[\![x]\!] \oplus yR(\!(x)\!)[\![y]\!] \ar[r] \ar[d]_-{y=0} & R(\!(x)\!)[\![y]\!] \ar[d]^-{y=0} \\
	R[\![x]\!] \ar[r] & R(\!(x)\!)}$$ that, after applying the $\mathrm{Spec}$ functor, yields (by \cite[Lemma 4.5]{AHLHR}) a pushout diagram in the category of quasi-compact and quasi-separated algebraic stacks. By applying the internal Hom-stack functor $\mathrm{HOM}_{\mathsf{St}_k}(-, \mathrm{B}G)$ to this pushout, we thus get that $$\underline{\mathbf{Gr}}_{\mathbb{A}^1,0}^{\mathrm{(2)}} (R) \simeq \mathrm{Bun}(R[\![x]\!] \oplus yR(\!(x)\!)[\![y]\!])\times_{\mathrm{Bun}(R(\!(x)\!)(\!(y)\!))} \{*\} \,\, \in \mathsf{Grpds},$$ so we do have a canonical map $\Phi: \mathcal{G}r^{(2)}_{G} \to \underline{\mathbf{Gr}}_{\mathbb{A}^1,0}^{\mathrm{(2)}}$ of prestacks over $k$,  given on $R$-points by 
$$\Phi(R): \frac{G(R(\!(x)\!)(\!(y)\!))}{G(R[\![x]\!] \oplus yR(\!(x)\!)[\![y]\!])} \ni [\alpha] \longmapsto (\mathrm{triv}, \alpha) \in \underline{\mathbf{Gr}}_{\mathbb{A}^1,0}^{\mathrm{(2)}}(R)$$
where $\mathrm{triv}$ stands for the trivial $G$-bundle.
We now prove that $\underline{\mathbf{Gr}}_{\mathbb{A}^1,0}^{\mathrm{(2)}}$ is set-valued, i.e. it is a functor with values in groupoids that are equivalent to sets. To achieve this, we show that for any $(\mathcal{E}, \varphi) \in \underline{\mathbf{Gr}}_{\mathbb{A}^1,0}^{\mathrm{(2)}}(R)$, we have $$\mathrm{Aut}_{\underline{\mathbf{Gr}}_{\mathbb{A}^1,0}^{\mathrm{(2)}}(R)}(\mathcal{E}, \varphi) = \{ \mathrm{id}_{\mathcal{E}}\}.$$ Let $\rho \in \mathrm{Aut}_{\underline{\mathbf{Gr}}_{\mathbb{A}^1,0}^{\mathrm{(2)}}(R)}(\mathcal{E}, \varphi)$, so that $\rho: \mathcal{E} \to \mathcal{E}$ is an isomorphism of $G$-bundles on $Y:=\mathrm{Spec} (R[\![x]\!] \oplus yR(\!(x)\!)[\![y]\!])$ that equals the identity when restricted to $U:= \mathrm{Spec}(R(\!(x)\!)(\!(y)\!))$. Now observe that $U=Y\setminus V(xy)$ where $xy$ is a non zero divisor in $R[\![x]\!] \oplus yR(\!(x)\!)[\![y]\!]$. In other words, $U$ is the complement in $Y$ of an effective Cartier divisor, thus $U$ is schematically dense quasi-compact in $Y$. Moreover, $\mathcal{E} \to Y$ is a separated morphism (since being separated is \'etale local on the target, $G$ is affine and $\mathcal{E}$ is a $G$-bundle on $Y$, hence \'etale locally trivial on $Y$). Therefore, $\rho$ and $\mathrm{id}_{\mathcal{E}}$ are $Y$-morphisms $\mathcal{E} \to \mathcal{E}$ agreeing on $U$, so they agree.\\
We finally prove that $\Phi^{\sharp}: \mathcal{G}r^{\mathrm{(2)}, \sharp}_{G}  \to \underline{\mathbf{Gr}}^{\mathrm{(2)}, \sharp}_{\mathbb{A}^1,0} $ is an isomorphism of fppf sheaves of sets on $\mathsf{Aff}_k$. Clearly $\Phi$ is a monomorphism of presheaves, hence $\Phi^\sharp$ is a monomorphism of sheaves. It will then be enough to show that $\Phi$ is locally surjective.	We will show that for any $G$-bundle $\mathcal{E}$ on $R[\![x]\!] \oplus yR(\!(x)\!)[\![y]\!]$, there exists an fppf covering $R\to R'$ such that the pullback of $\mathcal{E}$ to $R'[\![x]\!] \oplus yR'(\!(x)\!)[\![y]\!]$ is trivialisable. By our earlier observation in this proof, $\mathcal{E}$ can be identified with a triple $(\mathcal{E}_1, \mathcal{E}_2, \phi)$, where $\mathcal{E}_1$ is a $G$-bundle on $R(\!(x)\!)[\![y]\!]$, $\mathcal{E}_2$ a $G$-bundle on $R[\![x]\!]$, and $\phi$ is an isomorphism between the restrictions of $\mathcal{E}_1$ and $\mathcal{E}_2$ to $R(\!(x)\!)$. Now, we do know that there exists an fppf covering $R\to R'$ such that the pullback on $\mathcal{E}_2$ to $R'[\![x]\!]$ is trivialisable. Let's choose one such trivialisation $\alpha'_2$, and denote by $(\mathcal{E}'_1, \mathcal{E}'_2, \phi')$ the pullback of the triple $(\mathcal{E}_1, \mathcal{E}_2, \phi)$ along the maps induced by $R\to R'$. By composing the restriction of $\alpha'_2$ to $R'(\!(x)\!)$ with the pullback of $\phi$ to $R'(\!(x)\!)$, we define a trivialisation $\beta'_1$ of $\mathcal{E}_1$ restricted to $R'(\!(x)\!)$. By formal smoothness, we deduce (if needed, by passing to a further fppf covering of $R'$) a trivialisation $\alpha'_1$ of all of $\mathcal{E}_1$, extending $\beta'_1$. By definition of $\beta'_1$, it is easy to check that the pair $(\alpha'_1 , \alpha'_2)$ defines an isomorphism between $(\mathcal{E}'_1, \mathcal{E}'_2, \varphi')$ and the trivial triple $(\mathrm{triv}'_1, \mathrm{triv}'_2, \mathrm{id})$. Summing up, we have shown that, for any $R \in \mathsf{CAlg}_k$, and any $G$-bundle $\mathcal{E}$ on $\mathrm{Spec}(R[\![x]\!] \oplus yR(\!(x)\!)[\![y]\!])$, there exists an fppf cover $R\to R'$ such that the pullback on $\mathcal{E}$ to $\mathrm{Spec}(R'[\![x]\!] \oplus yR'(\!(x)\!)[\![y]\!])$ is trivialisable, as claimed.

\subsubsection*{The geometric big and the geometric jet Grassmanian}
We treat the last two cases together. We start by proving that the prestacks $\underline{\mathbf{Gr}}_{\mathbb{A}^1,0}^{\mathrm{big}}$ and $\underline{\mathbf{Gr}}_{\mathbb{A}^1,0}^J$ are set-valued. We only deal with $\underline{\mathbf{Gr}}_{\mathbb{A}^1,0}^J$, as the the case of the geometric big Grassmannian is similar.
Let $S= \Spec R$ be an affine scheme over $k$. An object of $\underline{\mathbf{Gr}}_{\mathbb{A}^1,0}^{J}(R)$ is a couple $(\calE, \varphi)$, where $\calE$ is a $G$-bundle on $R[\![x,y]\!]$, and $\varphi$ is a trivialisation of the restriction of $\calE$ to $R(\!(x)\!)[\![y]\!]$. To prove the claim, we need to show that every such object $(\calE, \varphi)$ has trivial automorphisms group in the category $\underline{\mathbf{Gr}}_{D,Z}^{J}(R)$.

First, suppose $\calE = \mathrm{triv}$ is the trivial bundle. Let $\alpha, \beta$ be two automorphisms of $(\mathrm{triv}, \varphi)$. In particular, $\alpha, \beta$ are elements of $G(R[\![x,y]\!])$. Their compatibility with $\varphi$ implies that their images along the map
\[ G(R[\![x,y]\!]) \to G(R(\!(x)\!)[\![y]\!]) \]
coincide. But this map is injective, and therefore $\alpha=\beta$, as desired.

Let now $(\calE, \varphi)$ be a general object in $\underline{\mathbf{Gr}}_{D,Z}^{J}(R)$, and let $\eta$ be an automorphism of it. We claim that $\eta$ is in fact the identity map. By \cite[Section 3.1]{Hennion_Melani_Vezzosi_FlagGrass}, the prestack $\mathbf{Bun}_{\hZ^\affinize}$ can be identified $\mathbf{Bun}_{\hZ}$, which is a stack for the fppf topology on $\mathsf{Aff} /X$, since $\cF_\hZ$ is a representable fiber functor (see \cite[Lemma 3.1.2]{Hennion_Melani_Vezzosi_FlagGrass}). It follows in particular that  $\underline{\mathbf{Bun}}_{\hZ^\affinize}$ is a stack for the fppf topology on $\mathsf{Aff}_k$. Hence it is enough to find a cover $\{R \to R_i\}_{i \in I}$ for which $\eta_{|R_i}$ is the identity, for every $i$. 
In virtue of what we showed earlier in the proof, it would suffice to find a covering family $\{ R \to R_i \}_{i \in I}$ for which $(\calE_i, \varphi_i) := (\calE_{|R_i}, \varphi_{|R_i})$ is isomorphic to an object of the form $(\mathrm{triv}, \psi_i)$. In fact, $(\mathrm{triv}, \psi_i)$ has trivial automorphisms, hence so does $(\calE_i, \varphi_i)$, and it would follow that $\eta_{|R_i}$ is the identity, as desired.

Therefore, we are left with proving that we can choose a covering family $\{ R \to R_i \}_{i \in I}$ such that $(\calE_i, \varphi_i)$ is isomorphic to an object $(\mathrm{triv}, \psi_i)$. To this purpose, consider the restriction $\cE_{|x=y=0}$ of $\cE$ to $\Spec(R)$. It is a principal $G$-bundle on $\Spec(R)$, and as such we can find a covering family $\{ R \to R_i \}_{i \in I}$ such that its restriction to $\Spec(R_i)$ is trivial for every index $i$. We claim that the restriction $\cE_i$ of $\cE$ to $\Spec(R_i [\![x,y]\!])$ is also trivial. In fact, to construct a trivialization of $\cE_i$ it is enough to construct compatible trivializations of its restrictions $(\cE_i)_{|(x,y)^n=0}$ over $\Spec(R_i[\![x,y]\!]/(x,y)^n)$. This may be done inductively: we start from the trivialization of $(\cE_i)_{|(x,y)=0}$, which is given by a section $\sigma_1$ of the structural map $(\cE_i)_{|(x,y)=0} \to \Spec(R_i)$. Then we can construct a trivialization $\sigma_2$ as a section in the following diagram
\[
\begin{tikzcd}
(\cE_i)_{|(x,y)=0} \arrow[d]  \arrow[r] &
(\cE_i)_{|(x,y)^2=0} \arrow[d]  \\
\Spec(R_i) \arrow[u,bend right,"\sigma_1",swap] \arrow[r] & \Spec\left(R_i[x,y]/(x,y)^2\right)  \arrow[dashed,u,bend right,"\sigma_2",swap]
\end{tikzcd}
\]
where we used that the right vertical arrow is smooth, and that the bottom map is a first order thickening of affine schemes. Similar constructions yield a sequence of compatible trivializations of the $G$-bundles $ (\cE_i)_{|(x,y)^n=0}$, which is what we wanted to construct.

Next, consider the two maps of presheaves
\[ LLG \to \underline{\mathbf{Gr}}_{\mathbb{A}^1,0}^{\mathrm{big}}, \ \ \ LJG \to \underline{\mathbf{Gr}}_{\mathbb{A}^1,0}^{J} \]
defined as follows. Given an affine scheme $\Spec(R)$ and a class $\overline{\varphi} \in LLG(R)$ (or in $LJG(R)$), we send $\overline{\varphi}$ to the element $(\mathrm{triv}, \varphi)$, where $\mathrm{triv}$ denotes the trivial bundle on $\Spec(R[\![x,y]\!])$ and $\varphi$ is an automorphism of the trivial bundle on $\Spec(R(\!(x)\!)(\!(y)\!))$ (or in $\Spec(R(\!(x)\!)[\![y]\!])$). Therefore, we get two morphisms of presheaves
\begin{equation}\label{eq:presheavesMaps} \graff_G^{\mathrm{big}} \to \underline{\mathbf{Gr}}_{\mathbb{A}^1,0}^{\mathrm{big}}, \ \ \ \graff^J_G \to \underline{\mathbf{Gr}}_{\mathbb{A}^1,0}^J\end{equation}
which in turn induce maps between sheafifications
\begin{equation}\label{eq:sheavesMaps} \graff_G^{\mathrm{big}, \sharp} \to \underline{\mathbf{Gr}}_{\mathbb{A}^1,0}^{\mathrm{big}, \sharp}, \ \ \ \graff^{J, \sharp}_G \to \underline{\mathbf{Gr}}_{\mathbb{A}^1,0}^{J, \sharp}. \end{equation}
Notice that the maps in \eqref{eq:presheavesMaps} are monomorphisms of presheaves. Since the sheafification functor is left exact, the maps in \eqref{eq:sheavesMaps} are monomorphisms of sheaves. To prove that they are also epimorphisms, it is enough to show that the maps \eqref{eq:presheavesMaps} are locally surjective. But we already observed that every object $(\cE, \varphi) \in \underline{\mathbf{Gr}}_{\mathbb{A}^1,0}^{\mathrm{big}}$ (or in $\underline{\mathbf{Gr}}_{\mathbb{A}^1,0}^J$) is locally of the form $(\mathrm{triv}, \psi)$, where again $\mathrm{triv}$ denotes the trivial bundle. Hence both maps in \eqref{eq:sheavesMaps} are equivalences, as claimed.
\end{proof}

\begin{rem}
	If we assume that $G$ is solvable, then Theorem \ref{teo:rappresentabilita} assures that the presheaf ${}^L\graff_G$ is in fact an fppf sheaf. In this case, the proof of Theorem \ref{thm:comparazioneGrass} shows that the functor ${}^L\underline{\mathbf{Gr}}_{\mathbb{A}^1,0}$ is already a sheaf of sets.
\end{rem}

\medskip 
Theorem \ref{thm:comparazioneGrass} does not include a comparison between the geometric Grassmannian of the loop group (Definition \ref{defin:geometricGrassmannians}) for $X= \mathbb{A}^2_k$, at the fixed flag $(D=\{y=0\}, Z= \{x=y=0\})$, and $\mathcal{G}r_{G}^L$ (\ref{def:grasmmannianequozienti}). Indeed, in this case we are only able to prove the following weaker analog of Theorem \ref{thm:comparazioneGrass}.

\begin{prop}\label{loopgrassonkpoints}
Let $X = \mathbb{A}^2_k$, and $D,Z$ as in Theorem \ref{thm:comparazioneGrass}. Let $G$ be $\mathrm{GL}_n$, $\mathrm{SL}_n$, $\mathrm{Sp}_{2n}$ or a solvable affine algebraic group over $k$. Then there is a canonical bijection $\mathcal{G}r_{G}^L(k) \simeq \underline{\mathbf{Gr}}_{\mathbb{A}^1,0}^{L}(k)$. 
\end{prop}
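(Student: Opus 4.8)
The plan is to compute both sides on $k$-points explicitly and exhibit a bijection. On the one hand, $\mathcal{G}r_G^L(k) = LLG(k)/JLG(k) = G(k(\!(x)\!)(\!(y)\!))/G(k[\![x]\!](\!(y)\!))$. On the other hand, unwinding Definition \ref{defin:geometricGrassmannians} at the flag $(D,Z)=(\{y=0\},\{x=y=0\})$ in $X=\mathbb{A}^2_k$ and evaluating on $\mathrm{Spec}\,k$ via Definition \ref{defchangesites}, the $k$-points of $\underline{\mathbf{Gr}}^L_{\mathbb{A}^1,0}$ are the groupoid of pairs $(\mathcal{E},\varphi)$ where $\mathcal{E}$ is a $G$-bundle on $\mathrm{Spec}\,\cF^{\affinize}_{\widehat{Z}\smallsetminus D}(X/X) = \mathrm{Spec}\,k[\![x]\!](\!(y)\!)$ and $\varphi$ trivializes its restriction to $\mathrm{Spec}\,\cF_{(\widehat{Z}^{\widehat{D}}\smallsetminus Z)\smallsetminus D}(X/X)=\mathrm{Spec}\,k(\!(x)\!)(\!(y)\!)$, using the $k$-point computations of the fiber functors in the Example following Definition \ref{def:fiberfunctors}.

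First I would produce the obvious comparison map: sending a class $[\alpha]\in G(k(\!(x)\!)(\!(y)\!))/G(k[\![x]\!](\!(y)\!))$ to the pair $(\mathrm{triv},\alpha)$ gives a well-defined map $\mathcal{G}r_G^L(k)\to \underline{\mathbf{Gr}}^L_{\mathbb{A}^1,0}(k)$, and it is clearly injective on isomorphism classes. The content of the proposition is surjectivity, i.e.\ that \emph{every} $G$-bundle $\mathcal{E}$ on $\mathrm{Spec}\,k[\![x]\!](\!(y)\!)$ is trivial. For this I would invoke the structural hypotheses on $G$: the ring $k[\![x]\!](\!(y)\!)$ is not a field, but its geometry is controlled — for $G=\mathrm{GL}_n$ one reduces to showing every finitely generated projective module is free, and for $\mathrm{SL}_n,\mathrm{Sp}_{2n}$ one uses that these are \enquote{special} groups in Serre's sense, so that $G$-torsors for the Zariski (equivalently étale) topology are classified by $H^1_{\mathrm{Zar}}(-,G)$ and the relevant cohomology vanishes. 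For solvable $G$ one reduces, exactly as in Section \ref{ssez:riduzioneGmGa} via Lemma \ref{lem:induzionerep} and Lemma \ref{lem:induzioneunipotente}, to the cases $G=\mathbb{G}_a$ (where torsors are governed by a coherent $H^1$ that vanishes on affines) and $G=\mathbb{G}_m$ (where one uses that $\mathrm{Pic}$ of the relevant ring is trivial).

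The key remaining point, and the main obstacle, is the vanishing of $G$-bundles over $\mathrm{Spec}\,k[\![x]\!](\!(y)\!)$. I would handle this by regarding $k[\![x]\!](\!(y)\!)$ as $(k[\![x]\!])(\!(y)\!)$, i.e.\ the loop functor applied to the complete local base $k[\![x]\!]$, and then using a Beauville--Laszlo or graded/filtered argument to reduce triviality of a $G$-bundle on $\mathrm{Spec}\,R(\!(y)\!)$ to triviality over $R=k[\![x]\!]$ itself, where $\mathrm{Spec}\,R$ is a henselian local trait and $G$-bundles trivialize by the cited special/solvable considerations. The reason the statement is confined to $k$-points (and to these particular $G$) is precisely that this triviality argument does not propagate to arbitrary test rings $R$, where $R[\![x]\!](\!(y)\!)$ can carry genuinely nontrivial bundles; at the level of $k$-points the base is a sufficiently simple complete ring that the classification collapses. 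Once triviality is established, surjectivity of the comparison map follows and, combined with injectivity, yields the asserted canonical bijection $\mathcal{G}r_G^L(k)\simeq \underline{\mathbf{Gr}}^L_{\mathbb{A}^1,0}(k)$.
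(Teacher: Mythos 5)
Your overall architecture matches the paper's: compute both sides on $k$-points, send $[\alpha]\mapsto(\mathrm{triv},\alpha)$, observe injectivity is easy, and reduce everything to showing that every $G$-bundle on $\mathrm{Spec}\,k[\![x]\!](\!(y)\!)$ is trivial. But at precisely that reduction — which you yourself call the main obstacle — your proposal has a genuine gap. The paper's proof rests on one concrete algebraic observation that you never make: $k[\![x]\!](\!(y)\!)=k[\![x,y]\!][1/y]$ is the localization of the two-dimensional regular local ring $k[\![x,y]\!]$ obtained by inverting $y$, hence it is a Noetherian UFD of Krull dimension $1$, i.e.\ a \emph{PID}. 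Everything then collapses: finitely generated projectives over a PID are free (settling $\mathrm{GL}_n$ and $\mathrm{SL}_n$, and also $\mathbb{G}_m$ via $\Pic=0$, which you assert but do not justify); $\mathrm{Sp}_{2n}$ is special and simply connected, so Harder's theorem over Dedekind rings applies; and solvable $G$ reduces by induction to tori and $\mathbb{G}_a$, the latter by vanishing of coherent cohomology on affines (the one case you do handle correctly).

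In place of this, you propose to view $k[\![x]\!](\!(y)\!)$ as $R(\!(y)\!)$ with $R=k[\![x]\!]$ and to ``reduce triviality of a $G$-bundle on $\mathrm{Spec}\,R(\!(y)\!)$ to triviality over $R$'' by ``a Beauville--Laszlo or graded/filtered argument.'' This step would fail as stated. Beauville--Laszlo is a gluing theorem: it reconstructs a bundle on $\mathrm{Spec}\,B$ from its restrictions to the $y$-adic completion and to $\mathrm{Spec}\,B[1/y]$; it gives no mechanism for trivializing a bundle that lives only on $\mathrm{Spec}\,B[1/y]$, and there is no morphism between $\mathrm{Spec}\,R$ and $\mathrm{Spec}\,R(\!(y)\!)$ along which to transport triviality. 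Made precise, your idea amounts to asking that every $G$-bundle on $\mathrm{Spec}\,k[\![x,y]\!][1/y]$ \emph{extend} to $\mathrm{Spec}\,k[\![x,y]\!]$ (where it would indeed be trivial, the ring being strictly henselian local); but that is a nontrivial purity/extension statement for torsors which you neither formulate nor prove, and which is considerably harder than the PID observation it would replace. One further minor omission: since $\underline{\mathbf{Gr}}^{L}_{\mathbb{A}^1,0}(k)$ is a priori a groupoid, one must also check that objects $(\mathrm{triv},\varphi)$ have no nontrivial automorphisms — the paper does this by noting that $G(k[\![x]\!](\!(y)\!))\to G(k(\!(x)\!)(\!(y)\!))$ is injective — before speaking of a bijection of sets.
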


\begin{proof} We start by proving that, for $G$ as in the statement, any $G$-bundle on $S=\mathrm{Spec}\, A$ where $A=k[\![x]\!](\!(y)\!)$, is trivial. First of all we notice that $A$ is a PID. Indeed, $A$ is the localization at $y$ of $k[\![x,y]\!]$ that is a local ring of dimension $2$. Therefore $A$ is factorial, Noetherian and of dimension $1$. Hence $A$ is a PID. This immediately implies the claim for $G=\mathrm{GL}_n$ and $G=\mathrm{SL}_n$. The group $G=\mathrm{Sp}_{2n}$ is special (i.e. \'etale torsors and Zariski torsors concide) and simply connected, thus every (Zariski) $G$-torsor on $S$ is trivial by \cite[Satz 3.3]{Harder}.\\
The remaining case $G$ solvable reduces, by induction on the dimension, to the case of $G$ a torus (that is obvious), and $G=\mathbb{G}_a$ which follows from $H^1_{\'et}(S,\mG_a)\simeq H_{Zar}^1(S, \mG_a)=0$ (since $S$ is affine and $\mG_a$ a coherent sheaf).\\
Since the automorphism group of any object of the form $(\mathrm{triv}, \varphi)$ in $\underline{\mathbf{Gr}}_{\mathbb{A}^1,0}^{L}(k)$ is trivial (because $G(k[\![x]\!](\!(y)\!)) \to G(k(\!(x)\!)(\!(y)\!))$ is injective), we deduce from the previous triviality result both that $\underline{\mathbf{Gr}}_{\mathbb{A}^1,0}^{L}(k)$ is (a groupoid equivalent to) a set, and that the canonical map $$\mathcal{G}r_{G}^L(k) \longrightarrow \underline{\mathbf{Gr}}_{\mathbb{A}^1,0}^{L}(k), \, [\varphi] \longmapsto (\mathrm{triv}, \varphi)$$ is surjective. Injectivity is obvious, and we conclude.
\end{proof}

\section{Geometric Grassmannians for arbitrary $X$ and quotient Grassmannians}
In this Section we prove that, for arbitrary $G$, the geometric Grassmannians (introduced in Definition \ref{defin:geometricGrassmannians}) of an \emph{arbitrary} smooth quasi-projective surface $X$ at a fixed flag of type $(D,Z)$ 
generalize the corresponding quotient Grassmannians in two variables of Definition \ref{def:grasmmannianequozienti}.\\
More precisely, we show (see Corollary \ref{comparisongeometricvsquotient}) that, when $D$ is a smooth effective Cartier divisor on the smooth quasi-projective surface $X$, and $Z$ consists of a \emph{single} (smooth) $k$-point inside $D$, 
then the corresponding geometric loop Grassmannian, geometric 2-dimensional local fields Grassmannian, geometric jet Grassmannian, and geometric big Grassmannian, all appearing in Definition \ref{defin:geometricGrassmannians}, are non-canonically isomorphic to the quotient Grassmannians of the same type (as in Definition \ref{def:grasmmannianequozienti}).  

\subsection{Reduction to $X$ affine}\label{subsec_redtoXaffine}
Let $X$ be a quasi-projective surface over $k$, and $(D,Z)$ be flag of closed subschemes $Z\subset D \subset X$, where  $Z$ consists of a single point. Let $\mathcal{F}: \mathsf{Aff}/X \to \mathsf{St}_k$ be an arbitrary fiber functor appearing in Definition \ref{defin:geometricGrassmannians}. For $U\subset X$ an \emph{arbitrary} affine open neighborhood of $Z$, we denote by $\mathcal{F}^U: \mathsf{Aff}/U \to \mathsf{St}_k$ the composite $\mathcal{F}\circ \phi$ where $\phi: \mathsf{Aff}/U \to \mathsf{Aff}/X$ is the obvious forgetful functor. For $(\mathrm{Spec}\, R \to X) \in \mathsf{Aff}/X$, we consider the following pullback $$\xymatrix{(\mathrm{Spec}\, R)_U \ar[d] \ar[r] & \mathrm{Spec}\, R \ar[d]\\ U \ar[r] & X}$$
Since $X$ is quasi-projective over $k$, it is separated, hence the fiber product $(\mathrm{Spec}\, R)_U$ is \emph{affine} ($U$ being an affine open in $X$).

\begin{lem}\label{lem:keystepredtoXaffine}
Let $\mathcal{F}: \mathsf{Aff}/X \to \mathsf{St}_k$ be an arbitrary fiber functor appearing in Definition \ref{defin:geometricGrassmannians}. Then, for any $(\mathrm{Spec}\, R \to X) \in \mathsf{Aff}/X$, we have a canonical isomorphism in $\mathsf{St}_k$
$$\mathcal{F}(\mathrm{Spec}\, R \to X) \simeq     \mathcal{F}^{U}((\mathrm{Spec} \, R)_U \to U) = \mathcal{F}((\mathrm{Spec}\, R)_U \to U \to X).$$
\end{lem}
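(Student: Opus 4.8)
The plan is to reduce the stated equivalence to a comparison of the commutative rings and ind-schemes out of which each fiber functor $\cF$ of Definition \ref{defin:geometricGrassmannians} is assembled, and then to observe that every one of those constructions is \emph{supported}, in a precise sense, on the closed subscheme $\cF_Z(\Spec R \to X) = \Spec R \newtimes_X Z$, which by hypothesis lies inside the open $S_U$. First I would record the basic geometry: since $U \subseteq X$ is open, $S_U := \Spec R \newtimes_X U \to \Spec R$ is an open immersion, $S_U = \Spec R_U$ is affine (by separatedness, as already noted), and $R \to R_U$ is flat. Because $Z \subseteq U$, the closed subscheme $\cF_Z(\Spec R \to X)$ coincides with $S_U \newtimes_U Z = \cF_Z^U(S_U \to U)$ and is therefore contained in $S_U$; in ideal terms $I_{Z,R}R_U = I_{Z,R_U}$ and $V(I_{Z,R}) \subseteq S_U$. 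Moreover $Z \subseteq D$ gives $I_{D,R} \subseteq I_{Z,R}$, so each quotient $R/(I_{D,R}^n + I_{Z,R}^m)$, and in particular $R/I_{Z,R}^m$, is supported on $V(I_{Z,R}) \subseteq S_U$.

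The key local fact I would isolate is the following: for any finite $R$-algebra (or module) $M$ with $\supp M \subseteq S_U$, the natural map $M \to M \otimes_R R_U$ is an isomorphism. This follows from the sheaf condition applied to the open cover $\Spec R = S_U \cup (\Spec R \smallsetminus V(I_{Z,R}))$, on the second member of which $\widetilde{M}$ vanishes. Applying this to $M = R/I_{Z,R}^m$ and to $M = R/(I_{D,R}^n + I_{Z,R}^m)$ yields, compatibly in $m$ and $n$, canonical isomorphisms $R/I_{Z,R}^m \cong R_U/I_{Z,R_U}^m$ and $R/(I_{D,R}^n + I_{Z,R}^m) \cong R_U/(I_{D,R_U}^n + I_{Z,R_U}^m)$, where $I_{D,R_U} := I_{D,R}R_U$. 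Passing to the limits and colimits that define $\cF_{\hZ}^\affinize$, $\cF_{\hZ^{D_n}}^\affinize$, $\cF_{\hZ^{\widehat D}}^{\mathrm{aff},\widehat D}$ and $\cF_{\hZ^{\widehat D}}^\affinize$ then produces the desired identifications, canonically and compatibly with the structure maps to $\Spec R$ (resp.\ $S_U$).

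Finally I would treat the fiber functors built from open complements of $D$ and of $Z$, such as $\cF_{\hZ \smallsetminus D}$, $\cF_{\hZ^{\widehat D} \smallsetminus Z}$ and $\cF_{(\hZ^{\widehat D} \smallsetminus Z) \smallsetminus D}$. Here the one genuine subtlety is that $D$ need not be contained in $U$; but each such complement is formed \emph{after} completing at $Z$, and over the (ind-)affine neighborhood of $Z$ the closed subsets cut out by $I_{D,R}$ and by $I_{D,R_U}$ agree, since these ideals generate the same ideal in $\widehat{R}_{I_{Z,R}} \cong \widehat{(R_U)}_{I_{Z,R_U}}$. Thus excising $D$ (or $Z$) commutes with the passage from $R$ to $R_U$, and by assembling the elementary building blocks — $\Spec$, the filtered colimits yielding the ind-schemes, and these localizations — one obtains $\cF(\Spec R \to X) \simeq \cF^U((\Spec R)_U \to U)$ for every $\cF$ appearing in Definition \ref{defin:geometricGrassmannians}.

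I expect the main obstacle to be precisely this last point: verifying uniformly, for each of the five fiber functors (in particular the mixed one entering the $2$-dimensional local field Grassmannian), that removing the globally defined divisor $D$ is insensitive to shrinking $X$ to the affine neighborhood $U$. This is where one genuinely uses that every construction has already been completed at the point $Z \in U$, so that only the germ of $D$ along $Z$ is ever seen; away from that, the flatness of $R \to R_U$ together with the support computation above guarantees that no information is lost.
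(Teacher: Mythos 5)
Your proof is correct and takes essentially the same route as the paper's: both rest on the observation that $Z_R = \Spec R \times_X Z$ already lies inside the open affine $(\Spec R)_U$, so the infinitesimal neighborhoods of $Z_R$ — and hence all the completions, affinizations, and subsequent removals of $D$ and $Z$ (which occur only after completing at $Z$) — are canonically identified over $\Spec R$ and over $(\Spec R)_U$. The paper compresses this into a reduction to the single fiber functor $\cF^{\mathrm{aff}}_{\widehat{Z}}$, declared ``easy to realize,'' followed by the same observation; your support/localization lemma and your excision-of-$D$ argument are precisely the details behind that compressed reduction.
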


\begin{proof}
It is easy to realize that it is enough to prove the statement for the particular fiber functor $\mathcal{F}:= \mathcal{F}^{\mathrm{aff}}_{\widehat{Z}}$. But this is obvious since the pullback $Z_R \subset \mathrm{Spec}\, R$ of $Z \subset X$ along $\mathrm{Spec}\, R \to X$, is already contained in the open affine subset $(\mathrm{Spec}\, R)_U \hookrightarrow \mathrm{Spec}\, R$, so that actually, for any $n$, the $n$-th infinitesimal neighborhoods of $Z$ inside $(\mathrm{Spec}\, R)_U$ and $\mathrm{Spec}\, R$ are canonically isomorphic.  \end{proof}

For a fiber functor $\mathcal{F}': \mathsf{Aff}/U \to \mathsf{St}_k$ on $U$ ($U$ as above), we have the prestack $$\mathbf{Bun}_{\mathcal{F}'}:= \mathsf{Bun}^G \circ \mathcal{F}': (\mathsf{Aff}/U)^{\mathrm{op}} \longrightarrow \mathsf{Grpds}. $$
Let $u':\mathsf{Aff}/U \to \mathsf{Aff}_k$ be the forgetful functor.
As done with prestacks over $\mathsf{Aff}/X$ (see Section \ref{subsecdefgoemgrass}), we will push an arbitrary prestack $\mathbf{F}'$ over $\mathsf{Aff}/U$ to a prestack $${}_p u' (\mathbf{F}')=: \underline{\mathbf{F}}'$$ over $k$
and then consider its associated stack over $k$
$$\underline{\mathbf{F}'}^{\sharp} := (\underline{\mathbf{F}'})^\sharp  \,\, \in \mathsf{St}_k.$$ In particular, we have
$${}_p u' (\mathbf{Bun}_{\mathcal{F}'})=: \underline{\mathbf{Bun}}_{\mathcal{F}'} \in \mathsf{PreSt}_k$$ 
$$\underline{\mathbf{Bun}}^{\sharp}_{\mathcal{F}'} := (\underline{\mathbf{Bun}}_{\mathcal{F}'})^\sharp  \,\, \in \mathsf{St}_k.$$

The following result shows that for all the geometric Grassmannians of Definition \ref{defin:geometricGrassmannians}, if $Z$ consists of a single point in $X$, we may suppose that the surface $X$ is \emph{affine}.

\begin{prop}\label{propreductiontoaffineX} Let $G$ be an arbitrary smooth affine group scheme over $k$. Let $X$ be a quasi-projective surface over $k$, $(D,Z)$ be flag of closed subschemes $Z\subset D \subset X$, where  $Z$ consists of a single point, and  $\mathcal{F}: \mathsf{Aff}/X \to \mathsf{St}_k$ be any fiber functor appearing in Definition \ref{defin:geometricGrassmannians}. Let $U\subset X$ be any Zariski open affine neighborhood of $Z$ in $X$, and define the fiber functor $\mathcal{F}^{U} := \mathcal{F} \circ \phi$ over $\mathsf{Aff}/U$, where $\phi: \mathsf{Aff}/U \to \mathsf{Aff}/X$ is the obvious forgetful functor. Then there is a canonical isomorphism in $\mathsf{PreSt}_k$
$$\underline{\mathbf{Bun}}_{\mathcal{F}^U} \simeq \underline{\mathbf{Bun}}_{\mathcal{F}} $$
In particular, for any pair of fiber functors $(\mathcal{F}, \mathring{\mathcal{F}})$ on $\mathsf{Aff}/X$ appearing in Definition \ref{defin:geometricGrassmannians}, we have a canonical isomorphism
$$\underline{\mathbf{Gr}}^{(\mathcal{F}, \mathring{\mathcal{F}})} \simeq \underline{\mathbf{Gr}}^{(\mathcal{F}^{U}, \mathring{\mathcal{F}}^U)}$$
\end{prop}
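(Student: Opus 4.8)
The plan is to prove the statement $Y$-pointwise, by unwinding the right Kan extensions defining $\underline{\mathbf{Bun}}_{\mathcal{F}}$ and $\underline{\mathbf{Bun}}_{\mathcal{F}^U}$ into explicit limits and then comparing the two indexing categories via a cofinal functor. For $Y \in \mathsf{Aff}_k$, the formula recorded after Definition \ref{defchangesites} (applied through the adjunction ${}_p u \simeq v^p$) gives
\[ \underline{\mathbf{Bun}}_{\mathcal{F}}(Y)\;\simeq\; \lim_{(\mathrm{Spec}\, R\to X\times Y)\,\in\, \mathsf{Aff}/(X\times Y)}\mathsf{Bun}\big(\mathcal{F}(\mathrm{Spec}\, R\to X\times Y\to X)\big), \]
and likewise $\underline{\mathbf{Bun}}_{\mathcal{F}^U}(Y)$ is the analogous limit over $\mathsf{Aff}/(U\times Y)$ of $\mathsf{Bun}(\mathcal{F}^U(-))$. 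I would name these diagrams $\Phi$ and $\Psi$ and the indexing categories $\mathcal{I}=\mathsf{Aff}/(X\times Y)$, $\mathcal{J}=\mathsf{Aff}/(U\times Y)$.

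The heart of the argument is a pair of adjoint functors between $\mathcal{I}$ and $\mathcal{J}$. Post-composition with the open immersion $U\times Y\hookrightarrow X\times Y$ defines $p:\mathcal{J}\to\mathcal{I}$, and pullback along that immersion defines $q:\mathcal{I}\to\mathcal{J}$, sending $(\mathrm{Spec}\,R\to X\times Y)$ to $((\mathrm{Spec}\,R)_U\to U\times Y)$, where $(\mathrm{Spec}\,R)_U=\mathrm{Spec}\,R\times_{X\times Y}(U\times Y)\simeq \mathrm{Spec}\,R\times_X U$. Crucially $q$ lands in \emph{affine} schemes exactly because $X$, hence $X\times Y$, is separated, so that $(\mathrm{Spec}\,R)_U$ is affine, as observed just before Lemma \ref{lem:keystepredtoXaffine}. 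The universal property of the fibre product yields a natural adjunction $p\dashv q$. Since a left adjoint is an initial functor, one gets $\lim_{\mathcal{I}}\Phi\simeq\lim_{\mathcal{J}}(\Phi\circ p)$; and because the structure map to $X$ of $p(\mathrm{Spec}\,R'\to U\times Y)$ is literally $\mathrm{Spec}\,R'\to U\hookrightarrow X$, one has $\Phi\circ p=\Psi$ on the nose. This already produces the equivalence $\underline{\mathbf{Bun}}_{\mathcal{F}}(Y)\simeq\underline{\mathbf{Bun}}_{\mathcal{F}^U}(Y)$. Lemma \ref{lem:keystepredtoXaffine} enters through the complementary identity $\Psi\circ q\simeq\Phi$: it provides, for each $\mathrm{Spec}\,R$, the isomorphism $\mathcal{F}^U((\mathrm{Spec}\,R)_U\to U)\simeq\mathcal{F}(\mathrm{Spec}\,R\to X)$ that makes the reindexing through $q$ an explicit inverse to the one through $p$. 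I would then check that $p$, $q$ and the adjunction vary functorially in $Y$, so the pointwise equivalences assemble into a canonical isomorphism of prestacks $\underline{\mathbf{Bun}}_{\mathcal{F}^U}\simeq\underline{\mathbf{Bun}}_{\mathcal{F}}$.

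For the final assertion about the Grassmannians, I would use that $\underline{\mathbf{Gr}}^{(\mathcal{F},\mathring{\mathcal{F}})}={}_p u(\mathbf{Gr}^{(\mathcal{F},\mathring{\mathcal{F}})})$ and that, by Definition \ref{defin:geometricGrassmannians}, $\mathbf{Gr}^{(\mathcal{F},\mathring{\mathcal{F}})}$ is a finite limit of the various $\mathbf{Bun}_{\mathcal{F}}$'s over the final object (a fibre product, or the iterated fibre product appearing in the $\mathbf{Gr}^{(2)}$ case). As ${}_p u$ is a right adjoint it commutes with these finite limits, so $\underline{\mathbf{Gr}}^{(\mathcal{F},\mathring{\mathcal{F}})}\simeq\underline{\mathbf{Bun}}_{\mathcal{F}}\times_{\underline{\mathbf{Bun}}_{\mathring{\mathcal{F}}}}\{*\}$ and similarly over $U$. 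Applying the first part to each fiber functor occurring in the diagram, and invoking naturality of the comparison isomorphisms so that they are compatible with the restriction maps $\mathbf{Bun}_{\mathcal{F}}\to\mathbf{Bun}_{\mathring{\mathcal{F}}}$ induced by $\mathring{\mathcal{F}}\to\mathcal{F}$, yields the desired $\underline{\mathbf{Gr}}^{(\mathcal{F},\mathring{\mathcal{F}})}\simeq\underline{\mathbf{Gr}}^{(\mathcal{F}^U,\mathring{\mathcal{F}}^U)}$.

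I expect the main obstacle to be purely the categorical bookkeeping rather than any geometric difficulty: verifying that $q$ is a genuine functor into affine schemes (separatedness of $X$) and that the adjunction $p\dashv q$ holds naturally in $Y$, and matching the diagram-level identities $\Phi\circ p=\Psi$ and $\Psi\circ q\simeq\Phi$ (the latter being precisely Lemma \ref{lem:keystepredtoXaffine}) so that the resulting equivalence of limits is canonical and natural. The $\mathbf{Gr}^{(2)}$ case needs the same comparison for each of the several fiber functors in its defining pushout/pullback diagram, but introduces no new idea beyond applying the first part termwise.
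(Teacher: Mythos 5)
Your overall strategy --- unwind the right Kan extension into an explicit limit, compare the two indexing categories via the adjunction between post-composition with the open immersion and pullback to $U$, and conclude with Lemma \ref{lem:keystepredtoXaffine} --- is the same as the paper's; the paper merely organizes it over $X$ rather than pointwise over $Y$ (it factors ${}_p u' = {}_p u \circ {}_p \phi$, computes ${}_p \phi(\mathbf{Bun}_{\mathcal{F}^U})$ at $\mathrm{Spec}\, R \to X$ as a limit over a comma category equivalent to $\mathsf{Aff}/(\mathrm{Spec}\, R)_U$, observes that the opposite of this category has an initial object because $(\mathrm{Spec}\, R)_U$ is affine by separatedness, and then applies the Lemma). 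However, your central step has the variance backwards, and as stated it is false. The diagrams $\Phi$ and $\Psi$ are contravariant (they are presheaves), so the limits are really taken over $\mathcal{I}^{\mathrm{op}}$ and $\mathcal{J}^{\mathrm{op}}$; restriction along $p$ computes the same limit for \emph{every} such diagram only if $p^{\mathrm{op}}$ is initial, i.e.\ only if $p$ is \emph{final} --- and $p$ is not final: for any $i = (\mathrm{Spec}\, R \to X \times Y)$ whose image is not contained in $U \times Y$, the comma category $(i \downarrow p)$ is \emph{empty}, since a map $\mathrm{Spec}\, R \to \mathrm{Spec}\, R'$ over $X \times Y$ with $\mathrm{Spec}\, R'$ landing in $U \times Y$ would force $\mathrm{Spec}\, R$ to land there as well. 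Left-adjointness of $p$ gives that $p$ is \emph{initial}, which is the condition relevant to covariant diagrams, not to presheaves. Indeed, were your formal claim true, applying it to the presheaf $(\mathrm{Spec}\, R \to X\times Y) \mapsto R$ would yield $\mathcal{O}(X \times Y) \simeq \mathcal{O}(U \times Y)$, which fails for instance when $X$ is projective. So the comparison of limits is \emph{not} a formal consequence of the adjunction: it genuinely requires Lemma \ref{lem:keystepredtoXaffine}, which your write-up instead relegates to a ``complementary identity''.

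The repair uses only ingredients already in your proposal, with the roles exchanged. Since $p \dashv q$, it is $q^{\mathrm{op}} : \mathcal{I}^{\mathrm{op}} \to \mathcal{J}^{\mathrm{op}}$ that is a left adjoint, hence initial; therefore $\lim_{\mathcal{J}^{\mathrm{op}}} \Psi \simeq \lim_{\mathcal{I}^{\mathrm{op}}} \Psi \circ q^{\mathrm{op}}$ holds formally, and then Lemma \ref{lem:keystepredtoXaffine} supplies the isomorphism of diagrams $\Psi \circ q^{\mathrm{op}} \simeq \Phi$ (naturally in $i$, via the projections $(\mathrm{Spec}\, R)_U \to \mathrm{Spec}\, R$), giving $\underline{\mathbf{Bun}}_{\mathcal{F}^U}(Y) \simeq \underline{\mathbf{Bun}}_{\mathcal{F}}(Y)$. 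This corrected argument is exactly the paper's proof: the initial object $(\mathrm{Spec}\, R)_U$ of its comma category is your $q(i)$, and the Lemma is the crux, not a check. Your final paragraph (commuting the right adjoint ${}_p u$ with the fiber products defining the Grassmannians, including the iterated one for $\mathbf{Gr}^{(2)}$, and invoking naturality of the comparison maps) is fine as stated.
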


\begin{proof} Since the following diagram of functors 
$$\xymatrix{\mathsf{Aff}/U \ar[dr]_-{u'}\ar[rr]^-{\phi} & & \mathsf{Aff}/X \ar[dl]^-{u} \\
& \mathsf{Sch}_k & }$$ commutes, it will be enough to show that ${}_{p}\phi (\mathbf{Bun}_{\mathcal{F}^{U}})= \mathbf{Bun}_{\mathcal{F}}$ as prestacks over $\mathsf{Aff}/X$. Now, by definition of ${}_p \phi$, we have
$${}_p \phi(\mathbf{Bun}_{\mathcal{F}^U})(\mathrm{Spec}\, R \to X) = \mathrm{lim}_{\mathrm{Spec} \,R' \to \mathrm{Spec}\, R} \mathsf{Bun}(\mathcal{F}^U(\mathrm{Spec}\, R' \to U))$$ 
where the limit is over the category $\mathcal{I}^{\mathrm{op}}$ where $\mathcal{I}$ is the category having as \emph{objects} maps $\mathrm{Spec}\, R' \to U$ together with a morphism $\mathrm{Spec}\, R' \to \mathrm{Spec}\, R$  in $\mathsf{Aff}/X$, and as \emph{morphisms} the maps $\mathrm{Spec}\, R' \to \mathrm{Spec}\, R''$ commuting with the given maps to $U$ and $\mathrm{Spec}\, R$. 
Therefore, the objects of $\mathcal{I}^{\mathrm{op}}$ are maps $\mathrm{Spec}\, R' \to (\mathrm{Spec}\, R)_U = \mathrm{Spec}\, R \times_X U$ in $\mathsf{Aff}/U$, hence $(\mathrm{Spec}\, R)_U$ is an initial object of $\mathcal{I}^{\mathrm{op}}$ (being affine, as observed earlier in this Section). Thus  
$$ {}_p \phi(\mathbf{Bun}_{\mathcal{F}^U})(\mathrm{Spec}\, R \to X) \simeq \mathsf{Bun}(\mathcal{F}^U((\mathrm{Spec}\, R)_U \to U)) \simeq $$
$$\simeq \mathsf{Bun}(\mathcal{F}(\mathrm{Spec}\, R \to X)) =: \mathbf{Bun}_{\mathcal{F}}(\mathrm{Spec}\, R \to X),$$ where the last isomorphism follows from Lemma \ref{lem:keystepredtoXaffine}.
\end{proof}

\subsection{Reduction to $\mathbb{A}_k^2$}
Let now $X$ be a \emph{smooth} quasi-projective surface, and $(D,Z)$ a flag in $X$, where $D$ is a smooth effective Cartier divisor in $X$, and $Z=z$ is a single point in $D$.
By \cite[tag0FUE]{stacks-project}, there exists a commutative diagram\footnote{The composition $\xymatrix{\mathrm{Spec}\, k \ar[r]^-{z} & D_U \ar[r] & \mathbb{A}_k^1}$ is a morphism of $k$-schemes identifying some $k$-point $a \in  \mathbb{A}_k^1$.} in $\mathsf{Sch}_k$ 
$$\,\,\,\,\,\,\,\,\,\,\,\,\,\,\,\,\,\,\,\,\,\,\, \,\,\,\,\,\,\,\,\,\,\,\,\,\,\,\,\,\,\,\,\,\,\,\xymatrix{\mathrm{Spec}\, k \ar[r]^-{\mathrm{id}} \ar[d]_{z} & \mathrm{Spec}\, k  \ar[d]^-{a} \\
D_U \ar[r] \ar[d]_{i} & \mathbb{A}_k^1 \ar[d]^-{j} \\
U \ar[r]_-{\pi} & \mathbb{A}_k^2
}
\xymatrix{ & \\ & \,\,\,\,\,\,\,\,\,\,\,\,\,\,\,\,\,\,\,\,\,\,\, (\Box_k) \\ &}$$

\noindent where $\pi$ is \'etale, $j(x)=(x,0)$, $U=\mathrm{Spec}\, B$ is open affine in $X$, $z \in U$, $D_U=D \cap U= \pi^{-1}(\mathbb{A}^1_k)$.  

\begin{rem}\label{reductioadoriginem} Let $\mathbb{A}_k^1 =D_0 :=\{ y=0 \} \subset \mathbb{A}_k^2$, and $a=(\alpha,0)$ a $k$-point in $D_0$; we write $0$ for the $k$-point $(0,0)$ in $D_0 \subset \mathbb{A}_k^2$. For any pair of fibre functors $(\mathcal{F}_{(D_0,a)}^{\mathbb{A}_k^2}, \mathring{\mathcal{F}}_{(D_0, a)}^{\mathbb{A}_k^2})$ as in Definition \ref{defin:geometricGrassmannians}, there is an obvious isomorphism of pairs $$(\mathcal{F}_{(D_0,a)}^{\mathbb{A}_k^2}, \mathring{\mathcal{F}}_{(D_0, a)}^{\mathbb{A}_k^2}) \simeq (\mathcal{F}_{(D_0,0)}^{\mathbb{A}_k^2}, \mathring{\mathcal{F}}_{(D_0, 0)}^{\mathbb{A}_k^2}).$$
\end{rem}

For a pair $(R, \mathrm{J})$ consisting of a commutative ring $R$ and an ideal $J \subset R$, we denote by $\mathrm{Spf}(R, \mathrm{J})$ the ind-scheme formal completion of $R$ along $J$
$$\mathrm{Spf}(R, \mathrm{J}) := \mathrm{colim}_{n\geq 1} \, \mathrm{Spec} (R/\mathrm{J}^n).$$
A \emph{strict morphism} $\mathrm{Spf}(R, \mathrm{J}) \to \mathrm{Spf}(R', \mathrm{J}')$ is a morphism of diagrams $\mathbb{N}^{\mathrm{op}} \to \mathsf{Aff}$ 
$$\{ \mathrm{Spec} (R/\mathrm{J}^n) \}_{n \geq 1} \longrightarrow \{ \mathrm{Spec} (R'/\mathrm{J'}^n) \}_{n \geq 1}.$$
Obviously, a strict morphism (respectively, strict isomorphism), induces a morphism (resp., isomorphism) of ind-schemes.

The following results are probably well-known, we include them for the sake of completeness.

\begin{lem}\label{lemeasy1} Let $Y= \mathrm{Spec}\, R$ be an affine $k$-scheme, and $x: \mathrm{Spec} \, k \to Y$ a closed $k$-point (in $\mathsf{Sch}_k$). If $\mathfrak{m}_x \subset R$ denotes the kernel corresponding to $x$, and $(\mathcal{O}_{Y,x}, \mathfrak{m}_{Y,x})$ the local ring of $Y$ at $x$, then there exists a canonical strict isomorphism
$$\mathrm{Spf}(R, \mathfrak{m}_x) \simeq \mathrm{Spf}(\mathcal{O}_{Y,x}, \mathfrak{m}_{Y,x}).$$
\end{lem}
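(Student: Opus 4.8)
The plan is to unwind the definition of a \emph{strict} isomorphism and reduce the statement to a standard fact about quotients of a ring by powers of a maximal ideal. Since $x$ is a $k$-point, it corresponds to a $k$-algebra homomorphism $R \to k$ with kernel $\mathfrak{m}_x$, so that $R/\mathfrak{m}_x \cong k$ is a field and $\mathfrak{m}_x$ is a maximal ideal; under this identification one has $\mathcal{O}_{Y,x} = R_{\mathfrak{m}_x}$ and $\mathfrak{m}_{Y,x} = \mathfrak{m}_x R_{\mathfrak{m}_x}$. By definition, $\Spf(R,\mathfrak{m}_x)$ and $\Spf(\mathcal{O}_{Y,x},\mathfrak{m}_{Y,x})$ are the diagrams $\{\Spec(R/\mathfrak{m}_x^n)\}_{n\ge 1}$ and $\{\Spec(R_{\mathfrak{m}_x}/\mathfrak{m}_x^n R_{\mathfrak{m}_x})\}_{n\ge 1}$ indexed by $\mathbb{N}^{\mathrm{op}}$. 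The localization map $\ell\colon R \to R_{\mathfrak{m}_x}$ induces for each $n$ a homomorphism $\ell_n\colon R/\mathfrak{m}_x^n \to R_{\mathfrak{m}_x}/\mathfrak{m}_x^n R_{\mathfrak{m}_x}$, and these are automatically compatible with the transition maps of the two pro-systems, since they all arise from the single map $\ell$. Thus it suffices to show that each $\ell_n$ is an isomorphism; the strictness (i.e. the assembly of the $\ell_n$ into a morphism of diagrams $\mathbb{N}^{\mathrm{op}}\to\mathsf{Aff}$) is then formal.

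To prove $\ell_n$ is an isomorphism, I would first observe that $R/\mathfrak{m}_x^n$ is already a \emph{local} ring with maximal ideal $\mathfrak{m}_x/\mathfrak{m}_x^n$: this ideal is nilpotent, hence contained in every prime of $R/\mathfrak{m}_x^n$, while the quotient by it is the field $R/\mathfrak{m}_x\cong k$, so $\mathfrak{m}_x/\mathfrak{m}_x^n$ is the unique prime and in particular the unique maximal ideal. Consequently every element of $R$ mapping outside $\mathfrak{m}_x$ becomes a unit in $R/\mathfrak{m}_x^n$, so by the universal property of localization the surjection $R \to R/\mathfrak{m}_x^n$ factors uniquely through $R_{\mathfrak{m}_x}$; the induced map $R_{\mathfrak{m}_x}\to R/\mathfrak{m}_x^n$ kills $\mathfrak{m}_x^n R_{\mathfrak{m}_x}$ and provides a two-sided inverse to $\ell_n$. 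Equivalently, since localization commutes with quotients one has $R_{\mathfrak{m}_x}/\mathfrak{m}_x^n R_{\mathfrak{m}_x} \cong (R/\mathfrak{m}_x^n)_{\mathfrak{m}_x/\mathfrak{m}_x^n}$, and localizing a local ring at its maximal ideal changes nothing.

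There is no serious obstacle here: the content is entirely the levelwise isomorphism $\ell_n$, which is a standard fact in commutative algebra (and can be cited from the Stacks Project if a self-contained argument is judged unnecessary). The only points requiring a little care are the bookkeeping identifications $\mathcal{O}_{Y,x}=R_{\mathfrak{m}_x}$ and $\mathfrak{m}_{Y,x}^n=\mathfrak{m}_x^n R_{\mathfrak{m}_x}$, and the verification that the family $\{\ell_n\}_{n\ge 1}$ really is a morphism of the two pro-systems, so that the levelwise isomorphisms glue to the desired strict isomorphism of formal completions.
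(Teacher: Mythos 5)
Your proposal is correct and follows essentially the same route as the paper's proof: identify $\mathcal{O}_{Y,x}$ with the localization $S^{-1}R$ for $S=R\smallsetminus\mathfrak{m}_x$, use that localization commutes with quotients, and conclude that the levelwise map $R/\mathfrak{m}_x^n \to \mathcal{O}_{Y,x}/\mathfrak{m}_{Y,x}^n$ is an isomorphism because every element of $S$ is already invertible in $R/\mathfrak{m}_x^n$. You merely spell out two points the paper leaves implicit (why such elements are units, via the locality of $R/\mathfrak{m}_x^n$ with nilpotent maximal ideal, and the compatibility of the levelwise isomorphisms with the transition maps), which is fine.
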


\begin{proof} Let $S:= R\setminus \mathfrak{m}_x$, so that $\mathcal{O}_{Y,x} = R_{\mathfrak{m}_x}=S^{-1}R$, and $\mathfrak{m}_{Y,x} = \mathfrak{m}_xR_{\mathfrak{m}_x}= S^{-1}(\mathfrak{m}_x)$. Analogously, $\mathfrak{m}^n_{Y,x}= S^{-1}(\mathfrak{m}^n_x)$, for any $n\geq 1$. Therefore, we have a canonical morphism of rings $$\mathrm{loc}_S: R/\mathfrak{m}^n_x \longrightarrow S^{-1}(R/\mathfrak{m}^n_x) \simeq \mathcal{O}_{Y,x}/\mathfrak{m}^n_{Y,x}\,, \, n\geq 1.$$
We conclude that $\mathrm{loc}_S$ is a ring isomorphism, for any $n \geq 1$, since it is easy to check that any $s \in S=R\setminus \mathfrak{m}_x$ is invertible in $R/\mathfrak{m}^n_x$, for any $n \geq 1$.
\end{proof}

\begin{lem}\label{lemeasy2} Let $f: (A, \mathfrak{m}_A) \to (B, \mathfrak{m}_B)$ be an \'etale local morphism between local rings, inducing an isomorphism on residue fields. Then, $f$ induces a strict isomorphism
$$\mathrm{Spf}(A, \mathfrak{m}_A) \simeq \mathrm{Spf}(B, \mathfrak{m}_{B}).$$
\end{lem}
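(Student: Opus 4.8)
The plan is to reduce the statement to the finite Artinian truncations and to exploit the infinitesimal lifting property of étale morphisms. By the definition of strict morphism recalled just above, a strict isomorphism $\mathrm{Spf}(A,\mathfrak{m}_A) \simeq \mathrm{Spf}(B,\mathfrak{m}_B)$ is nothing but a compatible family of isomorphisms of the truncations, so it suffices to prove that for every $n \geq 1$ the ring map $\bar f_n : A/\mathfrak{m}_A^n \to B/\mathfrak{m}_B^n$ induced by $f$ is an isomorphism. I would stress at the outset that $f$ itself need \emph{not} be an isomorphism — for instance $k[t]_{(t)} \to k[y]_{(y-1)}$, $t \mapsto y^2-1$, is étale and induces the identity on residue fields but is not surjective — so the whole point is that étaleness yields an isomorphism only after passing to the infinitesimal neighbourhoods, and this is exactly where the hypothesis on residue fields enters.

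First I would record the ramification-theoretic input. As $f$ is étale it is flat and unramified, and being a local homomorphism inducing an isomorphism on residue fields this forces $\mathfrak{m}_A B = \mathfrak{m}_B$, hence $\mathfrak{m}_B^n = \mathfrak{m}_A^n B$ for all $n$. Consequently $B/\mathfrak{m}_B^n \simeq B \otimes_A (A/\mathfrak{m}_A^n)$, so, writing $R := A/\mathfrak{m}_A^n$ and $S := B/\mathfrak{m}_B^n$, the map $g := \bar f_n : R \to S$ is obtained from $f$ by base change along $A \to R$ and is therefore étale. Moreover $R$ is local with nilpotent maximal ideal $\mathfrak{m}_R = \mathfrak{m}_A/\mathfrak{m}_A^n$, the ring $S$ is local with nilpotent maximal ideal $\mathfrak{m}_B/\mathfrak{m}_B^n$, and $g$ is local and induces the identity on the common residue field $\kappa$.

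The core of the argument — and the main obstacle — is the claim that a local étale homomorphism $g : R \to S$ of local rings with nilpotent maximal ideals and trivial residue field extension is an isomorphism. Here I would use that étale implies formally étale. Since the surjection $R \twoheadrightarrow \kappa = R/\mathfrak{m}_R$ has nilpotent kernel, the lifting half of formal étaleness produces a unique $R$-algebra map $h : S \to R$ lifting the canonical projection $S \to \kappa$; being $R$-linear it satisfies $h \circ g = \mathrm{id}_R$, so $g$ is a split monomorphism. To see that $h$ is a two-sided inverse I would invoke the unramified half: the restriction map $\Hom_{R\text{-}\mathrm{alg}}(S,S) \to \Hom_{R\text{-}\mathrm{alg}}(S,\kappa)$ is injective because $\mathfrak{m}_S$ is nilpotent. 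Both $g \circ h$ and $\mathrm{id}_S$ are $R$-algebra endomorphisms of $S$ that reduce to the canonical projection $S \to \kappa$ (for $g \circ h$ one uses that $g$ is local, so $\pi_S \circ g = \pi_R$, together with $\pi_R \circ h = \pi_S$), whence $g \circ h = \mathrm{id}_S$. Thus $g = \bar f_n$ is an isomorphism for every $n$.

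Finally, the isomorphisms $\bar f_n$ are, by construction, all induced by the single ring map $f$, hence automatically compatible with the transition (quotient) maps; they therefore assemble into a morphism of pro-systems $\{\mathrm{Spec}(B/\mathfrak{m}_B^n)\}_n \to \{\mathrm{Spec}(A/\mathfrak{m}_A^n)\}_n$ that is levelwise an isomorphism, i.e. into the desired strict isomorphism $\mathrm{Spf}(B,\mathfrak{m}_B) \simeq \mathrm{Spf}(A,\mathfrak{m}_A)$. The genuinely delicate point is the one already flagged: one must resist concluding that $f$ is an isomorphism and instead carry out the comparison one infinitesimal level at a time, which is precisely the mechanism by which the hypotheses (étale, local, trivial residue extension) combine to give an isomorphism after completion although not before.
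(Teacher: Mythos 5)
Your proof is correct, but it takes a genuinely different route from the one in the paper. Both arguments share the same starting point: since $f$ is unramified, local, and induces an isomorphism on residue fields, one gets $\mathfrak{m}_B = \mathfrak{m}_A B$, hence $\mathfrak{m}_B^n = \mathfrak{m}_A^n B$ for all $n$. From there the paper stays inside classical commutative algebra: it uses \emph{flatness} of $f$ and Bourbaki (III.5, Thm.~1) to conclude that the canonical map $\mathrm{gr}_{\mathfrak{m}_A}(A) \to \mathrm{gr}_{\mathfrak{m}_B}(B)$ is an isomorphism, and then promotes this to isomorphisms $A/\mathfrak{m}_A^n \simeq B/\mathfrak{m}_B^n$ by induction, via a five-lemma argument on the exact sequences $0 \to \mathfrak{m}^n/\mathfrak{m}^{n+1} \to A/\mathfrak{m}^{n+1} \to A/\mathfrak{m}^{n} \to 0$ and their analogues for $B$. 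You instead use $\mathfrak{m}_B^n = \mathfrak{m}_A^n B$ to identify $B/\mathfrak{m}_B^n$ with the base change $B \otimes_A (A/\mathfrak{m}_A^n)$, so that each truncation $\bar f_n : R \to S$ is (formally) étale over the Artinian local ring $R = A/\mathfrak{m}_A^n$, and then you manufacture an inverse by the infinitesimal lifting property: formal smoothness along the nilpotent thickening $R \twoheadrightarrow \kappa$ produces $h$ with $h \circ \bar f_n = \mathrm{id}_R$, and formal unramifiedness along $S \twoheadrightarrow \kappa$ (uniqueness of lifts) gives $\bar f_n \circ h = \mathrm{id}_S$. What the paper's route buys is economy of input — only ``flat + unramified'' plus a citable classical theorem; what your route buys is independence from flatness and from graded-ring machinery, and it isolates a clean, reusable intermediate statement: an étale local map of local rings with nilpotent maximal ideals and trivial residue extension is an isomorphism (essentially the fact that étale and radicial implies open immersion, specialized to one-point schemes). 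Your final assembly into a strict isomorphism is also fine, since all the levelwise maps are induced by the single map $f$. Two cosmetic remarks: your illustrative counterexample $t \mapsto y^2-1$ requires $\mathrm{char}\, k \neq 2$ to be étale, and the passage from square-zero to nilpotent thickenings in the lifting argument deserves a one-line reduction (filter $\mathfrak{m}_R$, resp.\ $\mathfrak{m}_S$, by its powers) — both standard and harmless.
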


\begin{proof}
Since $f$ is unramified and induces an isomorphism on residue fields, we have $f(\mathfrak{m}_A)=\mathfrak{m}_B$. Moreover, $f$ is flat, so we can apply \cite[III.5, Thm. 1]{bourbakiCA} to $A=A$, $\mathcal{I}= \mathfrak{m}_A$, $M=B$, and obtain that the canonical degree $0$ morphism\footnote{Easily checked to be a ring morphism.} $$\mathrm{gr}_{\mathfrak{m}_A}(A) \otimes_{A/\mathfrak{m}_A} B/\mathfrak{m}_B \simeq \mathrm{gr}_{\mathfrak{m}_A}(A) \longrightarrow \mathrm{gr}_{\mathfrak{m}_B}(B)$$ is an isomorphism, i.e. $$\mathfrak{m}^{n}_A/\mathfrak{m}^{n+1}_A \simeq \mathfrak{m}^{n}_B / \mathfrak{m}^{n+1}_B$$ for any $n \geq 1$. Since $A/\mathfrak{m}_A \simeq B/\mathfrak{m}_B$, induction on the diagrams $$\xymatrix{0 \ar[r] & \mathfrak{m}^{n}_A/\mathfrak{m}^{n+1}_A \ar[r] \ar[d] &  A/\mathfrak{m}^{n+1}_A  \ar[r] \ar[d] &  A/\mathfrak{m}^{n}_A \ar[r] \ar[d] & 0 \\
0 \ar[r] & \mathfrak{m}^{n}_B/\mathfrak{m}^{n+1}_B \ar[r] &  B/\mathfrak{m}^{n+1}_B  \ar[r] &  B/\mathfrak{m}^{n}_B \ar[r]  & 0}$$ proves that $f$ induces isomorphisms $A/\mathfrak{m}^{n}_A \simeq A/\mathfrak{m}^{n}_A$ for any $n \geq 1$.
\end{proof}

\begin{prop} \label{summingup}In the setting of $(\Box_k)$, $\pi$ induces a strict isomorphism

$$\hat{z}^{U} \simeq  \mathrm{Spf}(\mathcal{O}_{U,z}, \mathfrak{m}_{U,z}) \simeq \mathrm{Spf}(\mathcal{O}_{\mathbb{A}_k^2,a}, \mathfrak{m}_{\mathbb{A}_k^2,a}) \simeq \hat{a}^{\mathbb{A}_k^2}.$$
In particular, since the ind-scheme formal completion commutes with arbitrary base change $\mathrm{Spec}\, A \to \mathrm{Spec}\, k$, from the base-changed diagram in $\mathsf{Sch}_A$
$$\,\,\,\,\,\,\,\,\,\,\,\,\,\,\,\,\,\,\,\,\,\,\, \,\,\,\,\,\,\,\,\,\,\,\,\,\,\,\,\,\,\,\,\,\,\,\xymatrix{\mathrm{Spec}\, A \ar[r]^-{\mathrm{id}} \ar[d]_{z_A} & \mathrm{Spec}\, A  \ar[d]^-{a_A} \\
D_{U_A} \ar[r] \ar[d]_-{i_A} & \mathbb{A}_A^1 \ar[d]^-{j_A} \\
U_A \ar[r]_-{\pi} & \mathbb{A}_A^2
}
\xymatrix{ & \\ & \,\,\,\,\,\,\,\,\,\,\,\,\,\,\,\,\,\,\,\,\,\,\, (\Box_A) \\ &}$$ we get a strict isomorphism
$$\widehat{z_A}^{U_A} \simeq \mathrm{Spf}(B\otimes_k A,J_{B_A}) \simeq \mathrm{Spf}(A[x,y],\mathrm{I}_{A}) \simeq \widehat{a_A}^{\mathbb{A}_A^2}.$$
where $J_{B_A}$ is the ideal corresponding to the closed immersion $i_A \circ z_A:\mathrm{Spec}\, A \to U_A=\mathrm{Spec} (B\otimes_k A)$, and $\mathrm{I}_A$ the ideal corresponding to the closed immersion $j_A \circ a_A :\mathrm{Spec}\, A \to \mathbb{A}^2_A $.
\end{prop}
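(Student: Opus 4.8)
The plan is to assemble the asserted chain of three strict isomorphisms from the two preparatory Lemmas \ref{lemeasy1} and \ref{lemeasy2}, and then to propagate the result along the (automatically flat) base change $k \to A$. By the very definition of the formal completion, $\hat{z}^{U} = \colim_n \mathrm{Spec}(B/\mathfrak{m}_z^{n})$, where $\mathfrak{m}_z \subset B$ is the maximal ideal cutting out the closed $k$-point $z \in U = \mathrm{Spec}\, B$; thus the leftmost isomorphism $\hat{z}^{U} \simeq \mathrm{Spf}(\mathcal{O}_{U,z}, \mathfrak{m}_{U,z})$ is exactly Lemma \ref{lemeasy1} applied to $Y = U$ and $x = z$. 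Symmetrically, the rightmost isomorphism $\mathrm{Spf}(\mathcal{O}_{\mathbb{A}^2_k, a}, \mathfrak{m}_{\mathbb{A}^2_k, a}) \simeq \hat{a}^{\mathbb{A}^2_k}$ is Lemma \ref{lemeasy1} applied to $Y = \mathbb{A}^2_k$ and $x = a$.

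For the middle isomorphism I would apply Lemma \ref{lemeasy2} to the local homomorphism $\pi^*_z : (\mathcal{O}_{\mathbb{A}^2_k, a}, \mathfrak{m}_{\mathbb{A}^2_k, a}) \to (\mathcal{O}_{U, z}, \mathfrak{m}_{U, z})$ induced by $\pi$ at the compatible points $z \mapsto a$. Since $\pi$ is étale, this map is flat and unramified, hence étale local in the sense of that Lemma; and because $k$ is algebraically closed and both $z$ and $a$ are $k$-points, the induced map on residue fields is the identity $k \to k$, so the hypotheses of Lemma \ref{lemeasy2} are satisfied. This yields the strict isomorphism $\mathrm{Spf}(\mathcal{O}_{U,z}, \mathfrak{m}_{U,z}) \simeq \mathrm{Spf}(\mathcal{O}_{\mathbb{A}^2_k, a}, \mathfrak{m}_{\mathbb{A}^2_k, a})$ (a strict isomorphism being invertible, its direction is immaterial). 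Composing the three gives the asserted strict isomorphism over $k$; concretely, it amounts to a compatible system of ring isomorphisms $B/\mathfrak{m}_z^{n} \simeq k[x,y]/\mathrm{I}_a^{n}$, where $\mathrm{I}_a$ denotes the maximal ideal of $a$ in $k[x,y]$.

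It then remains to obtain the base-changed statement, for which the only genuinely delicate point is that the formal completion at a $k$-point commutes with base change along $k \to A$. Here I would verify that for each $n$ one has $(B/\mathfrak{m}_z^{n}) \otimes_k A \simeq (B \otimes_k A)/J_{B_A}^{n}$: since $A$ is flat over the field $k$, tensoring the inclusion $\mathfrak{m}_z^{n} \hookrightarrow B$ with $A$ stays injective and identifies $\mathfrak{m}_z^{n} \otimes_k A$ with the ideal $\mathfrak{m}_z^{n}(B \otimes_k A) = (\mathfrak{m}_z(B\otimes_k A))^{n} = J_{B_A}^{n}$, using that $J_{B_A} = \mathfrak{m}_z(B \otimes_k A)$ by definition of the section $z_A$; the identical computation applies to $k[x,y]$, $\mathrm{I}_a$, and $\mathrm{I}_A$. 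Passing to the colimit over $n$ identifies $\widehat{z_A}^{U_A}$ (resp. $\widehat{a_A}^{\mathbb{A}^2_A}$) with the base change of $\hat{z}^U$ (resp. $\hat{a}^{\mathbb{A}^2_k}$), and applying $- \otimes_k A$ termwise to the compatible system $B/\mathfrak{m}_z^{n} \simeq k[x,y]/\mathrm{I}_a^{n}$ produces the compatible system $(B \otimes_k A)/J_{B_A}^{n} \simeq A[x,y]/\mathrm{I}_A^{n}$, i.e. the desired strict isomorphism $\widehat{z_A}^{U_A} \simeq \widehat{a_A}^{\mathbb{A}^2_A}$. All steps beyond this base-change identity are formal bookkeeping, and the étaleness of $\pi$ enters the argument solely through Lemma \ref{lemeasy2}.
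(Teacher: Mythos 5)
Your proof is correct and takes essentially the same route as the paper: Lemma \ref{lemeasy1} applied at $z \in U$ and at $a \in \mathbb{A}^2_k$ gives the two outer identifications, and Lemma \ref{lemeasy2} applied to the \'etale local homomorphism $\mathcal{O}_{\mathbb{A}^2_k,a} \to \mathcal{O}_{U,z}$ (flat, unramified, with residue field $k$ on both sides) gives the middle one, exactly as in the paper's proof. The only difference is that you additionally verify the base-change compatibility $(B/\mathfrak{m}_z^{n})\otimes_k A \simeq (B\otimes_k A)/J_{B_A}^{n}$, a point the paper asserts without proof in the statement itself; this is a welcome (and correct) elaboration rather than a departure.
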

\begin{proof} By Lemma \ref{lemeasy1} and \ref{lemeasy2}, we deduce that $\pi$ induces strict isomorphisms
$$\hat{z}^{U} \simeq  \mathrm{Spf}(\mathcal{O}_{U,z}, \mathfrak{m}_{U,z}) \simeq \mathrm{Spf}(\mathcal{O}_{\mathbb{A}_k^2,a}, \mathfrak{m}_{\mathbb{A}_k^2,a}) \simeq \hat{a}^{\mathbb{A}_k^2}.$$
\end{proof}

An immediate consequence is

\begin{cor}\label{corsummingup} For any $\mathrm{Spec}\, A \to \mathrm{Spec}\,k$, in the setting of diagram $(\Box_A)$, we have a strict isomorphism of affine $k$-schemes $$\widehat{z_A}^{U_A,\,\mathrm{aff}} \simeq \widehat{a_A}^{\mathbb{A}_A^2,\,\mathrm{aff}}.$$
\end{cor}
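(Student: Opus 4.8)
The plan is to deduce the statement directly from Proposition \ref{summingup} by applying the affinization operation levelwise. First I would recall that, for an ind-scheme formal completion $\mathrm{Spf}(R,\mathrm{J}) = \mathrm{colim}_n \mathrm{Spec}(R/\mathrm{J}^n)$, the associated \emph{affinized} formal neighborhood is, by Definition \ref{def:fiberfunctors}, the spectrum of its global sections, namely $\mathrm{Spec}\big(\Gamma(\mathrm{Spf}(R,\mathrm{J}), \mathcal{O})\big) = \mathrm{Spec}\big(\lim_n R/\mathrm{J}^n\big)$, the spectrum of the $\mathrm{J}$-adic completion of $R$. Thus $\widehat{z_A}^{U_A,\,\mathrm{aff}} = \mathrm{Spec}\big(\lim_n (B\otimes_k A)/J_{B_A}^n\big)$ and $\widehat{a_A}^{\mathbb{A}_A^2,\,\mathrm{aff}} = \mathrm{Spec}\big(\lim_n A[x,y]/\mathrm{I}_A^n\big)$.

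Next I would unwind what a strict isomorphism of formal completions provides. By definition a strict morphism $\mathrm{Spf}(R,\mathrm{J}) \to \mathrm{Spf}(R',\mathrm{J}')$ is a morphism of the underlying $\mathbb{N}^{\mathrm{op}}$-diagrams, i.e. a family of ring maps $R'/\mathrm{J}'^n \to R/\mathrm{J}^n$ compatible with the transition maps; a strict isomorphism is such a family consisting of isomorphisms. Proposition \ref{summingup} supplies exactly such a strict isomorphism $\mathrm{Spf}(B\otimes_k A, J_{B_A}) \simeq \mathrm{Spf}(A[x,y], \mathrm{I}_A)$, hence compatible levelwise isomorphisms $(B\otimes_k A)/J_{B_A}^n \simeq A[x,y]/\mathrm{I}_A^n$ for every $n$.

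The key step is then simply to apply the inverse-limit functor $\lim_n$ over $\mathbb{N}^{\mathrm{op}}$ to this compatible family. Since $\lim_n$ is a functor from towers of rings to rings, a levelwise isomorphism of towers induces an isomorphism on limits, $\lim_n (B\otimes_k A)/J_{B_A}^n \simeq \lim_n A[x,y]/\mathrm{I}_A^n$. Applying $\mathrm{Spec}$ yields the desired strict isomorphism $\widehat{z_A}^{U_A,\,\mathrm{aff}} \simeq \widehat{a_A}^{\mathbb{A}_A^2,\,\mathrm{aff}}$ of affine $k$-schemes.

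I expect no serious obstacle here: the geometric content has already been extracted in Proposition \ref{summingup}, and all that remains is to observe that affinization — global sections followed by $\mathrm{Spec}$ — is functorial on strict morphisms and carries strict isomorphisms to isomorphisms, which is immediate because the inverse limit defining the global sections is computed diagram-by-diagram. The one point worth stating carefully is that the isomorphisms furnished by Proposition \ref{summingup} genuinely commute with the transition maps in $n$, so that they assemble into an isomorphism of towers rather than a mere collection of isomorphisms of the individual terms; this compatibility is precisely what the strictness asserted there guarantees.
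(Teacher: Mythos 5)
Your argument is correct and is exactly the deduction the paper intends: the paper states Corollary \ref{corsummingup} as an immediate consequence of Proposition \ref{summingup}, and your proof supplies precisely the routine verification left implicit, namely that affinization (taking $\lim_n$ of the tower of rings and then $\mathrm{Spec}$) is functorial on strict morphisms and carries the levelwise isomorphisms of Proposition \ref{summingup} to an isomorphism of the affinized completions.
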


In the proof of the next Proposition, we will use the following elementary result whose proof is left to the reader.

\begin{lem}\label{opencomplementstableunderbc}
Let $f: T' \to T$ be a map of schemes, $Z, V \in \mathbf{Sch}_T$, and $Z \hookrightarrow V$ a closed $T$-immersion. The canonical map $(V\times_T T') \setminus (Z\times_T T') \to (V\setminus Z)\times_{T} T'$ is an equality of open sub-$T'$schemes of $V\times_T T'$.
\end{lem}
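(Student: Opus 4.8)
The plan is to reduce the assertion to the standard fact that the formation of the open complement of a closed subscheme is stable under arbitrary base change, and then to the remark that an open subscheme is entirely determined by its underlying open subset. Set $W := V\times_T T'$, write $p\colon W \to V$ and $q\colon W \to T'$ for the two projections, and put $U := V\setminus Z$. First I would observe that, since the structure maps $Z \to T$ and $U \to T$ both factor through $V\to T$, associativity of the fibre product yields canonical identifications
\[ Z\times_T T' \simeq Z\times_V W, \qquad U\times_T T' \simeq U\times_V W. \]
In this way both subschemes in the statement become pullbacks along the single morphism $p\colon W\to V$: the right-hand side is $p^{-1}(U) = U\times_V W$, while the left-hand side is the complement in $W$ of $p^{-1}(Z)=Z\times_V W$. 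Thus the lemma becomes exactly the statement that $p^{-1}(U)$ and $W\setminus p^{-1}(Z)$ coincide as open subschemes of $W$.

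Next I would check the underlying sets. Because closed and open immersions are stable under base change, $p^{-1}(Z)=Z\times_V W \hookrightarrow W$ is a closed immersion and $p^{-1}(U)=U\times_V W\hookrightarrow W$ an open immersion. The closed subscheme $Z\times_V W$ is cut out by the pullback along $p$ of the ideal sheaf defining $Z$, so its underlying space is the topological preimage $p^{-1}(|Z|)$; similarly $|U\times_V W| = p^{-1}(|U|)$. As $|Z|$ and $|U|$ are complementary in $|V|$, their preimages are complementary in $|W|$, whence $|p^{-1}(U)| = |W|\setminus p^{-1}(|Z|) = |\,W\setminus p^{-1}(Z)\,|$. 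Since the two open subschemes $p^{-1}(U)$ and $W\setminus p^{-1}(Z)$ of $W$ share the same underlying open set and both carry the restricted structure sheaf, they are equal as open subschemes of $W$. Finally I would identify this equality with the canonical map of the statement: the left-hand open subscheme maps to $V$ via $p$ with image disjoint from $Z$ (by the set computation just made), hence factors uniquely through the open immersion $U\hookrightarrow V$, and together with $q$ this produces, by the universal property of $U\times_T T'$, precisely the comparison morphism; unwinding the identifications shows it is the asserted isomorphism.

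There is no genuine obstacle here, the argument being entirely formal. The only point demanding a little care is the identification $|Z\times_V W| = p^{-1}(|Z|)$, which is where base-change stability of closed immersions and the description of the pulled-back ideal sheaf are invoked, together with the attendant check that the morphism built from the universal property is the one referred to as ``the canonical map''; once the underlying open sets are seen to coincide, both this morphism and its inverse are forced, since open immersions are monomorphisms.
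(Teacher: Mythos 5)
Your proof is correct. There is in fact no proof in the paper to compare it with: the authors state the lemma as ``an elementary result whose proof is left to the reader,'' so your argument simply fills in exactly the kind of verification they had in mind. Your reduction, via associativity of fibre products, to base change along the single projection $p\colon W=V\times_T T'\to V$, the identification of the underlying sets $|Z\times_V W|=p^{-1}(|Z|)$ and $|U\times_V W|=p^{-1}(|U|)$, and the remark that an open subscheme of $W$ is determined by its underlying open set, together give a complete and standard proof. The one step genuinely requiring care is the identity $|Z\times_V W|=p^{-1}(|Z|)$, and your justification is the right one: $Z\times_V W$ is the closed subscheme cut out by the inverse-image ideal sheaf, and a point $w$ lies in it if and only if $\mathcal{I}_{p(w)}\subset\mathfrak{m}_{p(w)}$, since the local ring homomorphism $\mathcal{O}_{V,p(w)}\to\mathcal{O}_{W,w}$ is local; likewise your closing observation that the comparison morphism and its inverse are forced once the underlying sets agree (open immersions being monomorphisms) correctly pins down that the equality you construct is the canonical map of the statement.
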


In order to ease notations, we will write $\mathbb{A}^n$ for $\mathbb{A}_k^n$.
 
\begin{prop}\label{rectification}
Let $X$ be a smooth quasi-projective surface over $k$, equipped with a flag $(D,Z)$ where $D$ is a smooth effective Cartier divisor in $X$, and $Z \subset D$ consist of a single closed $k$-point. We denote by $\mathcal{F}_{D,Z}^{X}$ any one of the fiber functors appearing in the pairs of Definition \ref{defin:geometricGrassmannians}, and by $\mathcal{F}_{\mathbb{A}^1, 0}^{\mathbb{A}^2}$ the analog fiber functor over $\mathsf{Aff}/\mathbb{A}^2$, where $\mathbb{A}^{1} \hookrightarrow \mathbb{A}^2$ is given by $k[x,y] \to k[x] : y \mapsto 0$, and $0$ denotes $(0,0)\in \mathbb{A}^2$. Then we have a (non-canonical) isomorphism in $\mathsf{PreSt}_k$ $$\underline{\mathbf{Bun}}_{\mathcal{F}_{D,Z}^{X}} \simeq \underline{\mathbf{Bun}}_{\mathcal{F}_{\mathbb{A}^1, 0}^{\mathbb{A}^2}}.$$
\end{prop}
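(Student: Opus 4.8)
The plan is to combine the reduction to an affine surface (Proposition \ref{propreductiontoaffineX}) with the étale chart $\pi\colon U\to\mathbb{A}^2$ of diagram $(\Box_k)$, exploiting that every fiber functor of Definition \ref{defin:geometricGrassmannians} is assembled out of formal completions of $Z$ (and of $D$) together with open complements of $D$ and of $Z$, all of which $\pi$ matches with the corresponding data at $(\mathbb{A}^1,a)$. First I would apply Proposition \ref{propreductiontoaffineX} to the affine open $U\subset X$ of diagram $(\Box_k)$: writing $\mathcal{F}^U_{D_U,z}$ for the restriction of $\mathcal{F}^X_{D,Z}$ along $\phi\colon\mathsf{Aff}/U\to\mathsf{Aff}/X$, this yields a canonical isomorphism $\underline{\mathbf{Bun}}_{\mathcal{F}^X_{D,Z}}\simeq\underline{\mathbf{Bun}}_{\mathcal{F}^U_{D_U,z}}$ in $\mathsf{PreSt}_k$, reducing the statement to the affine surface $U=\mathrm{Spec}\,B$ with flag $(D_U,z)$, to be compared against $(\mathbb{A}^2,(\mathbb{A}^1,a))$.

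Since $U$ and $\mathbb{A}^2$ are both affine, the computation already used in Proposition \ref{propreductiontoaffineX} and in the proof of Theorem \ref{thm:comparazioneGrass} gives, for every $\mathrm{Spec}\,A\to\mathrm{Spec}\,k$,
\[
\underline{\mathbf{Bun}}_{\mathcal{F}^U_{D_U,z}}(\mathrm{Spec}\,A)\simeq \mathsf{Bun}\big(\mathcal{F}^U_{D_U,z}(U_A\to U)\big),\qquad
\underline{\mathbf{Bun}}_{\mathcal{F}^{\mathbb{A}^2}_{\mathbb{A}^1,a}}(\mathrm{Spec}\,A)\simeq \mathsf{Bun}\big(\mathcal{F}^{\mathbb{A}^2}_{\mathbb{A}^1,a}(\mathbb{A}^2_A\to\mathbb{A}^2)\big),
\]
with $U_A=U\times\mathrm{Spec}\,A$ and $\mathbb{A}^2_A=\mathbb{A}^2\times\mathrm{Spec}\,A$, the structure maps being the projections. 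Hence it suffices to produce isomorphisms $\mathcal{F}^U_{D_U,z}(U_A\to U)\simeq\mathcal{F}^{\mathbb{A}^2}_{\mathbb{A}^1,a}(\mathbb{A}^2_A\to\mathbb{A}^2)$ that are natural in $A$.

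The main step is then to check that $\pi$ induces these isomorphisms for each fiber functor. The key observation is that all of them are built from the affinized formal completion $\widehat{z_A}^{U_A,\mathrm{aff}}$ of $Z$, the nested formal completions along $\widehat{D}$, and the open complements of $D$ and of $Z$; moreover the ideals of $D_U$ and of $z$ pull back, under $\pi$, to the ideals of $\mathbb{A}^1$ and of $a$, since $D_U=\pi^{-1}(\mathbb{A}^1)$ and $\pi$ carries $z$ to $a$ by $(\Box_k)$. Corollary \ref{corsummingup} provides the strict isomorphism $\widehat{z_A}^{U_A,\mathrm{aff}}\simeq\widehat{a_A}^{\mathbb{A}^2_A,\mathrm{aff}}$ compatibly with these ideals, Lemma \ref{opencomplementstableunderbc} transports it across the open complements, and the flatness of the étale $\pi$ matches the intermediate completions along $D_U$ and $\widehat{D}$ with those along $\mathbb{A}^1$. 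Running this through each of $\mathcal{F}^\affinize_{\widehat{Z}}$, $\mathcal{F}_{\widehat{Z}\smallsetminus D}$, $\mathcal{F}^\affinize_{\widehat{Z}^{\widehat{D}}\smallsetminus Z}$, $\mathcal{F}_{(\widehat{Z}^{\widehat{D}}\smallsetminus Z)\smallsetminus D}$, $\mathcal{F}_{\widehat{Z}^D}$ and $\mathcal{F}_{\widehat{Z}^D\smallsetminus Z}$ yields the required isomorphisms, naturally in $A$ (naturality being inherited from the functoriality of the base-changed diagram $(\Box_A)$). I expect this bookkeeping—verifying uniformly that each fiber functor is formal-local at $z$ relative to $D$, and that the strict isomorphism of Corollary \ref{corsummingup}, which concerns only the point, upgrades to the full nested and punctured neighborhoods—to be the main obstacle, even though each individual case is routine.

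Finally I would invoke Remark \ref{reductioadoriginem} to replace the point $a=(\alpha,0)$ by the origin $0=(0,0)$, giving an isomorphism of pairs $(\mathcal{F}^{\mathbb{A}^2}_{\mathbb{A}^1,a},\mathring{\mathcal{F}}^{\mathbb{A}^2}_{\mathbb{A}^1,a})\simeq(\mathcal{F}^{\mathbb{A}^2}_{\mathbb{A}^1,0},\mathring{\mathcal{F}}^{\mathbb{A}^2}_{\mathbb{A}^1,0})$ and hence $\underline{\mathbf{Bun}}_{\mathcal{F}^{\mathbb{A}^2}_{\mathbb{A}^1,a}}\simeq\underline{\mathbf{Bun}}_{\mathcal{F}^{\mathbb{A}^2}_{\mathbb{A}^1,0}}$. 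Composing the three isomorphisms produces the asserted $\underline{\mathbf{Bun}}_{\mathcal{F}^X_{D,Z}}\simeq\underline{\mathbf{Bun}}_{\mathcal{F}_{\mathbb{A}^1,0}^{\mathbb{A}^2}}$, which is non-canonical precisely because it depends on the choice of the affine open $U$, of the étale chart $\pi$ from \cite[tag0FUE]{stacks-project}, and of the translation bringing $a$ to the origin.
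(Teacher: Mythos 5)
Your proposal follows the same route as the paper's proof: reduce to the affine chart $U$ of diagram $(\Box_k)$ via Proposition \ref{propreductiontoaffineX}, evaluate both prestacks on $\mathrm{Spec}\,A$ using affineness of $U$ and $\mathbb{A}^2$, transport the fiber functors along $\pi$, and normalize $a$ to the origin by Remark \ref{reductioadoriginem}. For the fiber functors $\cF^\affinize_{\hZ}$ and $\cF_{\hZ\smallsetminus D}$ your argument (Corollary \ref{corsummingup} plus Lemma \ref{opencomplementstableunderbc}) is exactly the paper's and is complete.

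The gap is in the remaining cases, namely every fiber functor built out of the nested completions $\hZ^{\widehat D}$ and $\hZ^{D}$: these are defined by the ideals $I_{D,R}^n + I_{Z,R}^m$, and matching them across $\pi$ is precisely where the paper has to do real work. Your justification --- ``the flatness of the \'etale $\pi$ matches the intermediate completions'' --- is not an argument: flatness alone never identifies adic completions (in Lemma \ref{lemeasy2} it is unramifiedness that forces the ideals to correspond, and flatness only then gives the graded isomorphism), and Corollary \ref{corsummingup}, as you yourself note, only concerns the completion of the point inside the ambient surface. What is actually needed, and what the paper supplies, is: (i) the \emph{scheme-theoretic} identification $\pi^{-1}(\mathbb{A}^1_n)\simeq D_n\cap U$ for every infinitesimal thickening of the divisor --- this cannot be obtained by applying \cite[tag0FUE]{stacks-project} to $D_n\hookrightarrow U$, since $D_n$ is not smooth; the paper instead goes back to the construction of $\pi$, for which $\pi^*(y)=f$ with $f$ a generator of $I_{D\cap U}$, so that $\pi^{-1}(\mathbb{A}^1_n)=\Spec\bigl(B/(f^{n+1})\bigr)=D_n\cap U$ --- and then (ii) the application of the completion-invariance machinery (Lemmas \ref{lemeasy1} and \ref{lemeasy2}, Proposition \ref{summingup}, Corollary \ref{corsummingup}) not to $\pi$ itself but to the induced \'etale maps $D_n\cap U\to \mathbb{A}^1_n$, level-wise in $n$ and compatibly with the colimit, followed by base change to $A$; note that (i) is a prerequisite for (ii), since without it those maps are not even base changes of $\pi$. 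Your observation that $I_{D_U}$ is the extension of $I_{\mathbb{A}^1}$ contains the seed of (i) (extensions of powers are powers of extensions), but you never draw the conclusion for the thickenings, your parallel claim that the ideal of $z$ is pulled back from that of $a$ is false in general (the scheme-theoretic fibre $\pi^{-1}(a)$ may contain points other than $z$; only the completion at $z$ is controlled), and step (ii) is exactly the ``bookkeeping'' you defer as routine. Since the entire content of the proposition beyond the two easy cases lives in that step, deferring it leaves the proof incomplete.
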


\begin{proof} By Remark \ref{reductioadoriginem}, we may suppose that in the commutative diagram in $\mathsf{Sch}_k$
(\cite[tag0FUE]{stacks-project})  
$$\,\,\,\,\,\,\,\,\,\,\,\,\,\,\,\,\,\,\,\,\,\,\, \,\,\,\,\,\,\,\,\,\,\,\,\,\,\,\,\,\,\,\,\,\,\,\xymatrix{\mathrm{Spec}\, k \ar[r]^-{\mathrm{id}} \ar[d]_{z} & \mathrm{Spec}\, k  \ar[d]^-{a} \\
D_U \ar[r] \ar[d]_{i} & \mathbb{A}_k^1 \ar[d]^-{j} \\
U \ar[r]_-{\pi} & \mathbb{A}_k^2
}
\xymatrix{ & \\ & \,\,\,\,\,\,\,\,\,\,\,\,\,\,\,\,\,\,\,\,\,\,\, (\Box_k) \\ &}$$ recalled at the beginning of $\S \,$ \ref{subsectredtoaff} (where $\pi$ is \'etale, $j(x)=(x,0)$, $U=\mathrm{Spec}\, B$ is open affine in $X$, $z \in U$, $D_U=D \cap U= \pi^{-1}(\mathbb{A}^1)$), $a$ is the origin $0$.
By Proposition \ref{propreductiontoaffineX}, we have that $\underline{\mathbf{Bun}}_{\mathcal{F}_{D,Z}^{X}}$ is isomorphic to the prestack $\underline{\mathbf{Bun}}_{\mathcal{F}_{D_U,Z}^{U}}$ over $\mathsf{Aff}_k$, i.e. (since $U$ is affine) to the functor
$$\underline{\mathbf{Bun}}_{\mathcal{F}_{D_U,Z}^{U}}: \mathrm{Spec}\, A \longmapsto \mathrm{Bun}(\mathcal{F}_{D_U,Z}^{U}(\mathrm{Spec}\, A \times U \to U)).$$
Since $\mathbb{A}^2$ is affine, we have that $\underline{\mathbf{Bun}}_{\mathcal{F}_{\mathbb{A}^1, 0}^{\mathbb{A}^2}}$ is the prestack over $\mathsf{Aff}_k$
$$\underline{\mathbf{Bun}}_{\mathcal{F}_{\mathbb{A}^1, 0}^{\mathbb{A}^2}} : \mathrm{Spec}\, A \longmapsto \mathrm{Bun}(\mathcal{F}_{\mathbb{A}^1,0}^{\mathbb{A}^2}(\mathbb{A}_A^2 \to \mathbb{A}_k^2)).$$ When $\mathcal{F}_{D,Z}^{X}= \mathcal{F}_{\widehat{Z}}^{\mathrm{aff}}$ or $\mathcal{F}_{D,Z}^{X}= \mathcal{F}_{\widehat{Z} \smallsetminus D}$ (Section \ref{sec:fiberfunctors}), Corollary \ref{corsummingup} together with Lemma \ref{opencomplementstableunderbc}, immediately implies the statement.\\ 
When $\mathcal{F}_{D,Z}^{X}$ equals $\mathcal{F}_{\widehat{Z}^{\widehat{D}}}^{\mathrm{aff}, \hD}$, $\mathcal{F}^{\mathrm{aff}}_{\widehat{Z}^{\widehat{D}}\smallsetminus Z}$, or $\mathcal{F}_{(\widehat{Z}^{\widehat{D}}\smallsetminus Z)\smallsetminus D}$, we need a further argument. First of all, it is easy to see (using Lemma \ref{opencomplementstableunderbc} to handle the removal of $Z$ and then of $D$) that in order to prove the statement in all these remaining cases, it will be enough to prove it for $\mathcal{F}_{D,Z}^{X}=\mathcal{F}_{\widehat{Z}^{\widehat{D}}}^{\mathrm{aff}, \widehat{D}}$. Recall from Section \ref{sec:fiberfunctors} that, for $S=\mathrm{Spec} \, R \to X$,
\[ \cF_{\hZ^{\widehat{D}}}^{\mathrm{aff}, \widehat{D}} (S/X) = \colim_{n \in \mN} \cF_{\hZ^{D_n}}^\affinize (S/X) \]
where \[ \cF_{\hZ^{D_n}}^\affinize (S/X) = \Spec \left( \lim_{m \in \mN} R / (I_{D,R}^n + I_{Z,R}^m) \right). \]
We cannot directly apply \cite[tag0FUE]{stacks-project} to $D_n\cap U \hookrightarrow U$ since $D_n$ is not smooth over $k$. However, the diagram $(\Box_k)$ above implies that we do have $\pi^{-1}(\mathbb{A}_n^1) \simeq D_n\cap U$, where $\mathbb{A}_n^1$ denotes the $n$-th thickening of $\mathbb{A}^1 \hookrightarrow \mathbb{A}^2$. Therefore, Proposition \ref{summingup} and  Corollary \ref{corsummingup} allow us to conclude that $$\underline{\mathbf{Bun}}_{\cF_{\hZ^{D_n}}^\affinize} \simeq \underline{\mathbf{Bun}}_{\cF_{\widehat{0}^{\mathbb{A}^1_n}}^\affinize}.$$
So, we are left to prove that $\pi^{-1}(\mathbb{A}_n^1) \simeq D_n\cap U$. This follows immediately from the fact (see the proof of \cite[tag0FUE]{stacks-project})  that  in the diagram $(\Box_k)$, the \'etale map $\pi:U \to \mathbb{A}^2$ is defined as follows. If $U=\mathrm{Spec} \, B$, and $D\cap U= \mathrm{Spec} (B/f)$, where $f$ is a not a zero divisor in $B$, we use \cite[tag054L]{stacks-project} to find, possibly shrinking $U$, an \'etale map $q: D\cap U \to \mathbb{A}^1$. Such a map corresponds to an element $[g] \in B/f$, with $g\in B$. Then $\pi: U \to \mathbb{A}^2$ is defined to be the map corresponding to the pair $(g, f)$. In particular, $$\pi^{-1}(\mathbb{A}_n^1) = \mathrm{Spec} (k[x,y]/(y^{n+1}) \otimes_{k[x,y]} B)$$ where $k[x,y] \to B$ sends $x$ to $g$, and $y$ to $f$. So $\pi^{-1}(\mathbb{A}_n^1) \simeq \mathrm{Spec}(B/(f^{n+1}))$ as claimed. 
\end{proof} 

\begin{rem}\label{enoughzsmooth_rectification}
By possibly shrinking $X$ to a suitable open neighborhood of $Z$, one can prove that Proposition \ref{rectification} still holds when we just suppose that $Z$ consist of a single smooth closed $k$-point of $D$, i.e. global smoothness of $D$ is unnecessary. We will not use this stronger result in the rest of the paper, so we only give an idea of the proof. 
If $D$ is of the form $D=nD_0$ with $D_0$ smooth effective Cartier and $n\geq 1$, all our fiber functors for $(D, Z)$ coincide with the ones for $(D_0, Z)$. 
If $D$ has isolated singularities, $Z$ is not among them, thus we may shrink $X$ by taking an affine open neighborhood of $Z$ inside the open complement of these singularities inside $X$: in such a neighborhood, $D$ is then smooth so the given proof applies.
\end{rem}

\subsection{Comparison of Grassmannians}\label{sec:generalization}

As an immediate consequence of Proposition \ref{rectification}, we get the following useful result.

\begin{cor}\label{rectificationforallGrass}
Let $X$ be a smooth quasi-projective surface over $k$, equipped with a flag $(D,Z)$ where $D$ is a smooth effective Cartier divisor in $X$, and $Z \subset D$ consist of a single closed $k$-point. Let $\underline{\mathbf{Gr}}_{D,Z}^{X}$ be any of the prestacks Grassmannians considered in Definition \ref{defin:geometricGrassmannians}, and $\underline{\mathbf{Gr}}_{\mathbb{A}^1,0}^{\mathbb{A}^2}$ the corresponding prestacks Grassmannian for $X= \mathbb{A}^2$ and $D=\{y=0\}=:\mathbb{A}^1$, $Z=\{x=y=0\}=: 0$. There is a (non-canonical) isomorphism of prestacks on $\mathsf{Aff}_k$
$$\underline{\mathbf{Gr}}_{D,Z}^{X} \simeq \underline{\mathbf{Gr}}_{\mathbb{A}^1,0}^{\mathbb{A}^2}.$$
\end{cor}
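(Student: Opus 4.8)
The plan is to deduce this immediately from Proposition \ref{rectification} by passing to fibre products, using that the change-of-sites functor preserves limits. First I would record that, by construction in Definition \ref{defin:geometricGrassmannians}, each of the geometric Grassmannians $\mathbf{Gr}_{D,Z}^{X}$ is a finite limit in $\mathsf{PSt}(\mathsf{Aff}/X)$ of prestacks of the form $\mathbf{Bun}_{\mathcal{F}}$, for the various fibre functors $\mathcal{F}$ occurring in the corresponding pair, together with the terminal prestack $\{*\}$. In the case of the $2$-dimensional local field Grassmannian the pushout of fibre functors in the defining pair is turned by $\mathbf{Bun}$ into the inner fibre product $\mathbf{Bun}_{(\hZ^{\hD}\smallsetminus Z)^{\affinize}} \times_{\mathbf{Bun}_{\hZ^{D}\smallsetminus Z}} \mathbf{Bun}_{\hZ^{D}}$, which is again a finite limit of $\mathbf{Bun}$'s, so all five Grassmannians are uniformly finite limits of the basic building blocks $\mathbf{Bun}_{\mathcal{F}}$ and $\{*\}$.

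Next I would use that the functor ${}_p u \colon \mathsf{PSt}(\mathsf{Aff}/X) \to \mathsf{PreSt}_k$ of Definition \ref{defchangesites}, being a composite of right adjoints (right Kan extensions), preserves all limits, and in particular finite fibre products and the terminal object. Applying ${}_p u$ to the defining fibre products therefore produces, for each type, a canonical identification of $\underline{\mathbf{Gr}}_{D,Z}^{X} = {}_p u(\mathbf{Gr}_{D,Z}^{X})$ with the corresponding fibre product of the prestacks $\underline{\mathbf{Bun}}_{\mathcal{F}} = {}_p u(\mathbf{Bun}_{\mathcal{F}})$ over $k$; the same holds verbatim for $\underline{\mathbf{Gr}}_{\mathbb{A}^1,0}^{\mathbb{A}^2}$.

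Finally I would invoke Proposition \ref{rectification}, which supplies an isomorphism $\underline{\mathbf{Bun}}_{\mathcal{F}_{D,Z}^{X}} \simeq \underline{\mathbf{Bun}}_{\mathcal{F}_{\mathbb{A}^1,0}^{\mathbb{A}^2}}$ for each individual fibre functor. The essential point — and the only content beyond formal nonsense — is that all these isomorphisms may be produced \emph{simultaneously} from a single choice of the \'etale chart $\pi \colon U \to \mathbb{A}^2$ in diagram $(\Box_k)$ of Proposition \ref{summingup}: that one $\pi$ induces compatible strict isomorphisms of all the formal completions $\hZ^{U}$, $\hZ^{D_n}$, and their affinizations with their counterparts at the origin of $\mathbb{A}^2$, and these commute with the morphisms of fibre functors (restriction, removal of $Z$, removal of $D$) that serve as the transition maps of the fibre products, as well as with the selection of the trivial bundle, since $\pi^{*}$ carries the trivial $G$-bundle to the trivial one. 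Consequently the rectification isomorphisms assemble into an isomorphism of the whole defining diagrams, not merely of their vertices, and taking fibre products yields the desired (non-canonical) isomorphism $\underline{\mathbf{Gr}}_{D,Z}^{X} \simeq \underline{\mathbf{Gr}}_{\mathbb{A}^1,0}^{\mathbb{A}^2}$.

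The main obstacle to keep an eye on is precisely this compatibility: one must verify that the non-canonical isomorphisms of Proposition \ref{rectification}, taken with respect to a fixed $\pi$, are natural in the fibre-functor variable, so that they descend to the limits and are not merely isomorphisms of the separate terms. Once naturality with respect to the maps of fibre functors is granted, the statement for every Grassmannian in Definition \ref{defin:geometricGrassmannians} follows at once by functoriality of fibre products and the limit-preservation of ${}_p u$.
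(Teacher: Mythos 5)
Your proposal is correct and follows essentially the same route as the paper, which states this corollary without proof as an ``immediate consequence'' of Proposition \ref{rectification}: the intended argument is precisely that each geometric Grassmannian is a finite limit of $\mathbf{Bun}_{\mathcal{F}}$'s and $\{*\}$, that ${}_p u$ preserves such limits, and that the rectification isomorphisms, all induced by the single \'etale chart $\pi$ of diagram $(\Box_k)$, are compatible with the restriction maps and the trivial-bundle sections. Your explicit attention to this naturality-in-the-fibre-functor point simply fills in what the paper leaves implicit.
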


The following consequence shows how geometric Grassmannians (all except the Grassmannian of the loop group) actually generalize the corresponding quotient Grassmannians to arbitrary smooth affine surfaces.

\begin{cor}\label{comparisongeometricvsquotient}
Let $G$ be an arbitrary affine algebraic group, and $X$ be a smooth quasi-projective  surface over $k$, equipped with a flag $(D,Z)$ where $D$ is a smooth effective Cartier divisor in $X$, and $Z \subset D$ consist of a single closed $k$-point.  Then we have equivalences (see Definition \ref{defin:geometricGrassmannians} and Definition \ref{def:grasmmannianequozienti}) 
	\[ {}^L\underline{\mathbf{Gr}}_{D,Z}^\sharp \simeq {}^L\graff_G^\sharp \ \ \ \underline{\mathbf{Gr}}_{D,Z}^{(2), \sharp} \simeq \graff^{(2), \sharp}_G, \ \ \ \underline{\mathbf{Gr}}_{D,Z}^{\mathrm{big}, \sharp} \simeq \graff^{\mathrm{big}, \sharp}_G, \ \ \ \underline{\mathbf{Gr}}_{D,Z}^{J, \sharp} \simeq \graff_G^{J, \sharp} .\]
\end{cor}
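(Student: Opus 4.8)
The plan is to reduce the statement to the previously established case $X=\mathbb{A}^2_k$ with the standard flag, and then invoke Theorem~\ref{thm:comparazioneGrass}. The two main ingredients are already in place: Corollary~\ref{rectificationforallGrass}, which identifies the geometric Grassmannians over an arbitrary $(X,D,Z)$ (with $D$ smooth and $Z$ a single point) with the corresponding ones over $(\mathbb{A}^2,\mathbb{A}^1,0)$ at the level of prestacks over $\mathsf{Aff}_k$, and Theorem~\ref{thm:comparazioneGrass}, which compares the latter with the quotient Grassmannians. So the proof is essentially a concatenation of these two comparisons, once one checks that stackification is compatible with each.

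\textbf{Step 1.} First I would record that, by Corollary~\ref{rectificationforallGrass}, for each of the four fiber-functor pairs $(\mathcal{F},\mathring{\mathcal{F}})$ corresponding to ${}^L\underline{\mathbf{Gr}}$, $\underline{\mathbf{Gr}}^{(2)}$, $\underline{\mathbf{Gr}}^{\mathrm{big}}$, and $\underline{\mathbf{Gr}}^{J}$, there is a (non-canonical) isomorphism of prestacks on $\mathsf{Aff}_k$
\[ \underline{\mathbf{Gr}}_{D,Z}^{X} \simeq \underline{\mathbf{Gr}}_{\mathbb{A}^1,0}^{\mathbb{A}^2}. \]
Here I should be careful to note that Corollary~\ref{rectificationforallGrass} is stated at the level of the \emph{prestacks} $\underline{\mathbf{Gr}}$, i.e. before stackification, since the rectification of Proposition~\ref{rectification} takes place in $\mathsf{PSt}_k$.

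\textbf{Step 2.} Next I would pass to fppf stackifications. Since the isomorphism of Step~1 holds in $\mathsf{PSt}_k$ and stackification $(-)^\sharp: \mathsf{PSt}_k \to \mathsf{St}_k$ is a functor, applying it yields isomorphisms of fppf stacks
\[ \underline{\mathbf{Gr}}_{D,Z}^{X,\sharp} \simeq \underline{\mathbf{Gr}}_{\mathbb{A}^1,0}^{\mathbb{A}^2,\sharp} \]
for each of the four Grassmannians in question. This uses nothing more than functoriality of $(-)^\sharp$.

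\textbf{Step 3.} Finally I would compose with Theorem~\ref{thm:comparazioneGrass}, which for $X=\mathbb{A}^2_k$, $D=\{y=0\}$, $Z=\{x=y=0\}$ supplies the equivalences
\[ {}^L\underline{\mathbf{Gr}}_{\mathbb{A}^1,0}^\sharp \simeq {}^L\graff_G^\sharp, \quad \underline{\mathbf{Gr}}_{\mathbb{A}^1,0}^{(2),\sharp} \simeq \graff_G^{(2),\sharp}, \quad \underline{\mathbf{Gr}}_{\mathbb{A}^1,0}^{\mathrm{big},\sharp} \simeq \graff_G^{\mathrm{big},\sharp}, \quad \underline{\mathbf{Gr}}_{\mathbb{A}^1,0}^{J,\sharp} \simeq \graff_G^{J,\sharp}. \]
Chaining Step~2 with these gives the four asserted (non-canonical) equivalences, completing the proof.

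\textbf{The main obstacle} is not really in the formal argument, which is a two-line concatenation, but in making sure the bookkeeping of stackification versus the functor ${}_pu$ is clean: one must confirm that the object called $\underline{\mathbf{Gr}}_{D,Z}^{X,\sharp}$ is genuinely $(\underline{\mathbf{Gr}}_{D,Z}^{X})^\sharp$ with $\underline{\mathbf{Gr}}_{D,Z}^{X}$ the prestack appearing in Corollary~\ref{rectificationforallGrass}, and similarly on the $\mathbb{A}^2$ side, so that Step~2 is legitimate and the notations in Theorem~\ref{thm:comparazioneGrass} and Corollary~\ref{rectificationforallGrass} match up. Given the warning in Remark~\ref{warning} that stackifying before or after pushing to $k$ can give different answers, I would explicitly check that both comparisons are phrased with respect to the same procedure (first push via ${}_pu$, then stackify), which is indeed the convention fixed in Definition~\ref{defchangesites} and used throughout. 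Once this compatibility is verified, the proof is immediate.
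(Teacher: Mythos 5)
Your proposal is correct and is precisely the paper's own argument: the published proof reads, in its entirety, that the corollary ``immediately follows from Corollary~\ref{rectificationforallGrass} and Theorem~\ref{thm:comparazioneGrass}'', which is exactly your Steps 1--3. Your additional care in Step 2 (functoriality of $(-)^\sharp$) and in checking that both cited results use the same convention of Definition~\ref{defchangesites} --- push via ${}_pu$ first, then stackify, as flagged by Remark~\ref{warning} --- is a correct and welcome elaboration of what the paper leaves implicit.
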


\begin{proof}
It immediately follows from Corollary \ref{rectificationforallGrass} and Theorem \ref {thm:comparazioneGrass}.
\end{proof}

%

\section{Ind-representability of geometric Grassmannians for solvable $G$ }\label{section:ind-repr-general}


In this Section we prove that, if $G$ is solvable, $X$ is a smooth quasi-projective  surface over $k$ and $(D,Z)$ is a flag on $X$ where $D$ is a smooth effective Cartier divisor in $X$, and Z consists of a \emph{finite} number of (smooth) $k$-points inside $D$, then the corresponding loop geometric Grassmannian, the 2-dimensional local fields geometric Grassmannian, the Jet geometric Grassmannian, and the Big geometric Grassmannian (all from Definition \ref{defin:geometricGrassmannians}), are represented by ind-schemes.


\subsection{Case of $Z$ consisting of a single smooth point}\label{subsectredtoaff}

We have proved in Section 1 (Theorem \ref{teo:rappresentabilita}) that, for $G$ solvable, all sheaf quotients given by the loop quotient Grassmannian (respectively, the 2-dimensional local fields quotient Grassmannian, resp. the Jet quotient Grassmannian, resp. the Big quotient Grassmannian), are ind-representable. Therefore, we immediately deduce from Corollary \ref{comparisongeometricvsquotient} the following

\begin{prop}\label{indreprofgeometric1point}
Let $G$ be solvable, and $X$ be a smooth quasi-projective surface over $k$, equipped with a flag $(D,Z)$ where $D$ is a smooth effective Cartier divisor in $X$, and $Z \subset D$ consists of a single closed $k$-point. Then the corresponding loop geometric Grassmannian, 2-dimensional local fields geometric Grassmannian, Jet geometric Grassmannian, and Big geometric Grassmannian (Definition \ref{defin:geometricGrassmannians}) are all ind-representable.
\end{prop}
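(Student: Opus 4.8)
The plan is to deduce the proposition directly from the two principal results already at our disposal: the ind-representability of the quotient Grassmannians for solvable $G$ (Theorem \ref{teo:rappresentabilita}) and the identification of the stackified geometric Grassmannians with the sheafified quotient Grassmannians (Corollary \ref{comparisongeometricvsquotient}). First I would record that, since $G$ is solvable, Theorem \ref{teo:rappresentabilita} asserts that each of the presheaf quotients ${}^L\graff_G$, $\graff^{(2)}_G$, $\graff^{\mathrm{big}}_G$ and $\graff^J_G$ is represented by an ind-affine ind-scheme. In particular each of these presheaves is \emph{already} an fppf sheaf, so it coincides with its own fppf sheafification; that is, $\graff^{(2),\sharp}_G \simeq \graff^{(2)}_G$ and similarly for the other three.

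Next I would invoke Corollary \ref{comparisongeometricvsquotient}, whose hypotheses are precisely those of the present statement ($X$ smooth quasi-projective, $D$ a smooth effective Cartier divisor, $Z$ a single closed $k$-point). It furnishes equivalences of fppf stacks over $k$
\[ {}^L\underline{\mathbf{Gr}}_{D,Z}^\sharp \simeq {}^L\graff_G^\sharp, \quad \underline{\mathbf{Gr}}_{D,Z}^{(2), \sharp} \simeq \graff^{(2), \sharp}_G, \quad \underline{\mathbf{Gr}}_{D,Z}^{\mathrm{big}, \sharp} \simeq \graff^{\mathrm{big}, \sharp}_G, \quad \underline{\mathbf{Gr}}_{D,Z}^{J, \sharp} \simeq \graff_G^{J, \sharp}. \]
Combining the two inputs, the right-hand side of each equivalence is represented by an ind-scheme, being the sheafification of an already ind-representable fppf sheaf; hence so is the left-hand side. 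This yields the claimed ind-representability of the geometric loop Grassmannian, the geometric $2$-dimensional local fields Grassmannian, the geometric big Grassmannian, and the geometric jet Grassmannian.

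There is no substantial obstacle at this stage, as the mathematical content is entirely packaged in the cited results; the proof is a one-step corollary. The only point that warrants a word of care is the bookkeeping of the stackification functor $(-)^\sharp$: the geometric Grassmannians are defined as prestacks and are not known to be sheaves before stackification (compare Remark \ref{notallstacks}), whereas for solvable $G$ the quotient Grassmannians are already sheaves by Theorem \ref{teo:rappresentabilita}. Thus the equivalences of Corollary \ref{comparisongeometricvsquotient} transport ind-representability from the (already sheafy) quotient side to the stackified geometric side with no loss, which is exactly what is needed to conclude.
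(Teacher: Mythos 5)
Your proposal is correct and is essentially identical to the paper's own proof: the paper likewise deduces the proposition immediately by combining Theorem \ref{teo:rappresentabilita} (ind-representability, hence sheafiness, of the quotient Grassmannians for solvable $G$) with Corollary \ref{comparisongeometricvsquotient}. Your extra remark on the stackification bookkeeping is a sound clarification of a point the paper leaves implicit, but it does not change the argument.
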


\subsection{The case of $Z$ equal to a finite number of smooth points}

\begin{prop}\label{productofgeomGrass}
Let $G$ be an affine algebraic group scheme over $k$, $X$ an arbitrary quasi-projective $k$-surface, $(D,Z)$ a flag on $X$ where $D$ is smooth and $Z$ consists of a disjoint finite union of closed $k$-points $Z_i$, $i=1,\ldots, n$. If $\underline{\mathbf{Gr}}_{(D,Z)}$ denotes \emph{any} of the geometric Grassmannians of Definition \ref{defin:geometricGrassmannians}, we have a canonical isomorphism  $$\underline{\mathbf{Gr}}^\sharp_{(D,Z)} \simeq \prod^n_{i=1}\underline{\mathbf{Gr}}^\sharp_{(D,Z_i)}.$$
\end{prop}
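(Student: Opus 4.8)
The plan is to exhibit the claimed factorization by reducing, via the formal-gluing/localization properties of the fiber functors involved, the geometry near $Z$ to the geometry near each point $Z_i$ separately, and then to observe that the associated $\mathbf{Bun}$-functors turn the resulting disjoint unions into products. First I would recall that every geometric Grassmannian of Definition \ref{defin:geometricGrassmannians} is built out of fiber functors that are, by construction, supported on the formal completion $\widehat{Z}$ of $Z$ (and its variants $\widehat{Z}^{\widehat{D}}$, punctured versions, etc.). The key local-to-global input is that formal completion along a disjoint union of closed points decomposes as a disjoint union of the formal completions along each point: for an affine $S = \Spec R$ mapping to $X$, the pullback $Z_R = \coprod_i (Z_i)_R$ is a disjoint union, and since the defining ideals $I_{Z_i,R}$ are pairwise comaximal after the étale-local separation of the points, one gets $\cF^\affinize_{\widehat{Z}}(S/X) \simeq \coprod_i \cF^\affinize_{\widehat{Z_i}}(S/X)$, and similarly for all the other fiber functors appearing in the definitions. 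The same decomposition holds for the ind-schematic completions $\widehat{Z}^{\widehat{D}}$ and for all the open-complement constructions, using Lemma \ref{opencomplementstableunderbc} to commute removal of $D$ (and of $Z$) with the disjoint-union splitting.

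The second step is to pass from this disjoint-union decomposition of fiber functors to a product decomposition of the $\mathbf{Bun}$-prestacks. Since a $G$-bundle on a disjoint union of (ind-)schemes is precisely a tuple of $G$-bundles on each component, we have $\mathbf{Bun}_{\coprod_i \cF_i} \simeq \prod_i \mathbf{Bun}_{\cF_i}$ as prestacks on $\Aff/X$, compatibly with the restriction maps entering the fiber products of Definition \ref{defin:geometricGrassmannians}. Because fiber products commute with products, this gives
\[ \mathbf{Gr}^{(\cF,\mathring\cF)}_{(D,Z)} \simeq \prod_{i=1}^n \mathbf{Gr}^{(\cF,\mathring\cF)}_{(D,Z_i)} \]
at the level of prestacks on $\Aff/X$. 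Applying the pushforward ${}_p u$ to $k$ and then stackifying, and using that the stackification functor $(-)^\sharp$ commutes with finite products (being a left adjoint that preserves finite limits, or by \cite[tag04Y1]{stacks-project} applied to products viewed as fiber products over $*$), yields the desired isomorphism $\underline{\mathbf{Gr}}^\sharp_{(D,Z)} \simeq \prod_{i=1}^n \underline{\mathbf{Gr}}^\sharp_{(D,Z_i)}$.

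I expect the main obstacle to be the first step, namely establishing the disjoint-union decomposition of the fiber functors cleanly for \emph{all} the variants at once, and in particular for the more delicate two-step completions $\cF^{\mathrm{aff},\widehat{D}}_{\widehat{Z}^{\widehat{D}}}$ and the mixed fiber product defining $\mathbf{Gr}^{(2)}_{D,Z}$. The subtlety is that the points $Z_i$ are disjoint in $X$ but their pullbacks along an arbitrary $S \to X$ need not \emph{a priori} remain disjoint; one must check that the relevant ideals stay comaximal in each finite quotient $R/(I_{D,R}^n + I_{Z,R}^m)$, so that the limit and colimit defining the affinized completions split as products. Since $Z$ is a finite set of closed points, this comaximality holds in each infinitesimal thickening because distinct closed points have comaximal ideals and comaximality is preserved under the relevant quotients; the completions therefore split, and one checks that this splitting is compatible with the transition maps in $n$ and $m$, so that passing to the (co)limits and to $\Spec$ preserves the disjoint-union decomposition. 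Once this bookkeeping is in place, the remaining steps are formal.
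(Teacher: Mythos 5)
Your proposal is correct and follows essentially the same route as the paper: the paper's proof likewise reduces to a prestack-level isomorphism on $\mathsf{Aff}/X$ and rests on exactly the comaximality you identify, namely that the pullback ideals satisfy $\mathrm{I}^{\alpha_1}_{Z_1,R} + \cdots + \mathrm{I}^{\alpha_n}_{Z_n,R} = R$ and $\prod_{i} \mathrm{I}^{\alpha_i}_{Z_i,R} = \bigcap_{i} \mathrm{I}^{\alpha_i}_{Z_i,R}$, so that all the completions split by the Chinese Remainder Theorem. The remaining formal steps you spell out (a $G$-bundle on a disjoint union is a tuple of bundles, fiber products commute with products, and stackification after pushing to $k$ preserves the product decomposition) are precisely what the paper declares elementary and leaves to the reader.
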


\begin{proof}
Instead of dwelling into a case by case proof, we only indicate the crucial common ingredient, leaving to the reader the elementary completion of the rest of the proof. \\
First of all it is clearly enough to prove that \begin{equation}\label{auxiliary} \mathbf{Gr}_{(D,Z)} \simeq \prod^n_{i=1}\mathbf{Gr}_{(D,Z_i)}\end{equation} as functors $(\mathsf{Aff}/X)^{\mathrm{op}} \to \mathsf{Grpds}$. Let then $(\mathrm{Spec}\, R \to X) \in \mathsf{Aff}/X$, and let $\mathrm{I}_{Z,R}$ (respectively, $\mathrm{I}_{Z_i,R}$, $i=1,\ldots n$) be the ideal in $R$ corresponding to $\mathrm{Spec}\,R \times_X Z$ (resp. to $\mathrm{Spec}\,R \times_X Z_i$). We then have $\mathrm{I}_{Z,R} =\prod^{n}_{i=1}\mathrm{I}_{Z_i,R}$, and since $\mathrm{V}(\mathrm{I}_{Z_i ,R}) \cap \mathrm{V}(\mathrm{I}_{Z_j ,R})$ for $i\neq j$, it is easy to deduce that $$\mathrm{I}^{\alpha_1}_{Z_1 ,R} + \ldots + \mathrm{I}^{\alpha_n}_{Z_n ,R} = R$$ for any sequence $(\alpha_1, \ldots, \alpha_n)$ of integers $\geq 1$. As a further consequence, we get $\mathrm{I}_{Z,R} =\prod^{n}_{i=1}\mathrm{I}_{Z_i,R} = \cap^{n}_{i=1} \, \mathrm{I}_{Z_i ,R}$, and that $\prod^{n}_{i=1}\mathrm{I}^{\alpha_i}_{Z_i,R} = \cap^{n}_{i=1} \, \mathrm{I}^{\alpha_i}_{Z_i ,R}$ for any sequence $(\alpha_1, \ldots, \alpha_n)$ of integers $\geq 1$. These observations easily imply the existence of a canonical isomorphism (\ref{auxiliary}).
\end{proof}

From Proposition \ref{productofgeomGrass} and Proposition \ref{indreprofgeometric1point} we immediately deduce our main result in this Section.

\begin{thm}\label{indreprofgeometricmorepoints}
Let $G$ be solvable, and $X$ be a smooth quasi-projective surface over $k$, equipped with a flag $(D,Z)$ where $D$ is a smooth effective Cartier divisor in $X$, and $Z \subset D$ consist of a finite number of closed $k$-points. Then the corresponding loop geometric Grassmannian, 2-dimensional local fields geometric Grassmannian, Jet geometric Grassmannian, and Big geometric Grassmannian (Definition \ref{defin:geometricGrassmannians}) are all \emph{ind-representable}.
\end{thm}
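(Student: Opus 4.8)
The plan is to reduce everything to the single-point case via the factorization isomorphism of Proposition~\ref{productofgeomGrass}, and then to observe that a finite product of ind-schemes is again an ind-scheme. Concretely, since $Z$ consists of finitely many pairwise disjoint closed $k$-points $Z_1,\dots,Z_n$ lying on the smooth divisor $D$, Proposition~\ref{productofgeomGrass} supplies a canonical isomorphism of fppf sheaves over $k$
\[ \underline{\mathbf{Gr}}^\sharp_{(D,Z)} \simeq \prod_{i=1}^n \underline{\mathbf{Gr}}^\sharp_{(D,Z_i)} \]
simultaneously for the loop, jet, big, and $2$-dimensional local field geometric Grassmannians of Definition~\ref{defin:geometricGrassmannians}. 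Thus the statement follows once we know that each factor is ind-representable and that ind-representability is preserved under finite products.

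The first of these inputs is exactly Proposition~\ref{indreprofgeometric1point}: each $Z_i$ is a single smooth closed $k$-point of $D$, so for solvable $G$ the corresponding Grassmannian $\underline{\mathbf{Gr}}^\sharp_{(D,Z_i)}$ is ind-representable. This rests on Corollary~\ref{comparisongeometricvsquotient}, which identifies it (non-canonically) with the two-variable quotient Grassmannian, together with the ind-affine ind-representability of the latter established in Theorem~\ref{teo:rappresentabilita}. In particular each factor is realized as a filtered colimit of (affine) schemes along closed immersions.

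It then remains to verify that the finite product is again an ind-scheme, which I would carry out as follows. Writing each factor as $\colim_{\alpha \in I_i} X^{(i)}_\alpha$ over a filtered poset $I_i$ with transition maps closed immersions, the product $\prod_{i=1}^n \underline{\mathbf{Gr}}^\sharp_{(D,Z_i)}$ is computed as the colimit over the product poset $I_1 \times \cdots \times I_n$ of the schemes $X^{(1)}_{\alpha_1} \times_k \cdots \times_k X^{(n)}_{\alpha_n}$; a finite product of filtered posets is filtered, and a product of closed immersions is a closed immersion (closed immersions being stable under base change and composition), so the transition maps of this system are again closed immersions. Hence the product is an ind-(affine ind-)scheme, and the theorem follows. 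The only genuinely delicate point — and the place where one must resist appealing to the classical one-variable theory — is that here the posets $I_i$ are \emph{not} countable and the $X^{(i)}_\alpha$ are \emph{not} of finite type; fortunately neither hypothesis enters the product argument, so there is no obstruction. In this sense the proof is essentially formal once Propositions~\ref{productofgeomGrass} and~\ref{indreprofgeometric1point} are in hand.
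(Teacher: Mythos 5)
Your proof is correct and takes essentially the same route as the paper, which deduces the theorem immediately from Proposition~\ref{productofgeomGrass} (factorization over the points $Z_i$) and Proposition~\ref{indreprofgeometric1point} (the single-point case for solvable $G$). The only step the paper leaves implicit—that a finite product of ind-schemes is again an ind-scheme—is exactly what you verify, and your verification (filtered product poset, products of closed immersions being closed immersions, with no countability or finite-type hypotheses needed) is sound.
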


\bibliographystyle{plain}
\bibliography{dahema}

\end{document}